\documentclass[article,11pt]{amsart}
\usepackage[top=25mm,bottom=25mm,left=25mm,right=25mm]{geometry}
\usepackage{amsmath,amssymb}
\usepackage{graphicx,times,bbm,url}
\usepackage{booktabs}
\usepackage{rotating}

\newtheorem{lemma}{Lemma}[section]
\newtheorem{proposition}{Proposition}[section]
\newtheorem{theorem}{Theorem}[section]
\newtheorem{corollary}{Corollary}[section]

\newtheorem{example}{Example}[section]
\newtheorem{remark}{Remark}[section]

\makeatletter
\def\section{\@startsection{section}{1}%
\z@{1\linespacing\@plus\linespacing}{1\linespacing}%
{\bf\centering}}
\def\subsection{\@startsection{subsection}{0}%
\z@{\linespacing\@plus\linespacing}{\linespacing}%
{\bf}}
\makeatother

\makeatletter
\@addtoreset{equation}{section}
\makeatother

\DeclareMathOperator{\supp}{supp}

\DeclareMathOperator{\loc}{loc}

\DeclareMathOperator{\Id}{Id}

\providecommand{\pro}[1]{(#1_t)_{t \geq 0}}

\newcommand{\cK}{\mathcal{K}}

\newcommand{\cE}{\mathcal{E}}

\newcommand{\R}{\mathbb{R}}
\newcommand{\1}{\mathbf{1}}
\newcommand{\Z}{\mathbb{Z}}
\newcommand{\pr}{\mathbf{P}}
\newcommand{\qpr}{\mathbb{Q}}

\newcommand{\ex}{\mathbf{E}}

\newcommand{\N}{\mathbb{N}}

\usepackage{color}

\begin{document}
\title[Quenched behaviour]
{The quenched asymptotics for nonlocal Schr\"{o}dinger operators with Poissonian potentials}
\author{Kamil Kaleta and Katarzyna Pietruska-Pa{\l}uba}

\address{K. Kaleta \\ Institute of Mathematics, University of Warsaw, ul. Banacha 2, 02-097 Warszawa and Faculty of Pure and Applied Mathematics, Wroc{\l}aw University of Technology, Wyb. Wyspia\'nskiego 27, 50-370 Wroc{\l}aw, Poland}
\email{kamil.kaleta@pwr.edu.pl, kkaleta@mimuw.edu.pl}

\address{K. Pietruska-Pa{\l}uba \\ Institute of Mathematics \\ University of Warsaw
\\ ul. Banacha 2, 02-097 Warszawa, Poland}
\email{kpp@mimuw.edu.pl}
\begin{abstract}
{We study the quenched long time behaviour of the survival probability up to time $t$, \linebreak $\ex_x\big[e^{-\int_0^t V^{\omega}(X_s){\rm d}s}\big],$  of a symmetric L\'evy process with jumps, under a sufficiently regular Poissonian random potential $V^{\omega}$ on $\R^d$. Such a function is a probabilistic solution to the parabolic equation involving the nonlocal Schr\"odinger operator based on the generator of the process $(X_t)_{t \geq 0}$ with potential $V^{\omega}$. For a large class of processes and potentials of finite range, we determine  rate functions $\eta(t)$ and compute explicitly the positive constants $C_1, C_2$ such that
\[-C_1 \leq \liminf_{t \to \infty} \frac{\log \ex_x\big[{\rm e}^{-\int_0^t V^{\omega}(X_s){\rm d}s}\big]}{\eta(t)}
 \leq \limsup_{t \to \infty}
 \frac{\log \ex_x\big[{\rm e}^{-\int_0^t V^{\omega}(X_s){\rm d}s}\big]}{\eta(t)} \leq -C_2, \]
almost surely with respect to $\omega$, for every fixed $x \in \R^d$. The functions $\eta(t)$ and the bounds $C_1, C_2$ heavily depend on the intensity of large jumps of the process. In particular, if its decay at infinity is `sufficiently fast', then we prove that $C_1=C_2$, i.e. the limit exists. Representative examples in this class are relativistic stable processes with L\'evy-Khintchine exponents $\psi(\xi) = (|\xi|^2+m^{2/\alpha})^{\alpha/2}-m$, $\alpha \in (0,2)$, $m>0$, for which we obtain that
\[\lim_{t \to \infty} \frac{\log \ex_x\big[{\rm e}^{-\int_0^t V^{\omega}(X_s)ds}\big]}{t/(\log t)^{2/d}} =  \frac{\alpha}{2}m^{1-\frac{2}{\alpha}} \, \left(\frac{\rho \omega_d}{d}\right)^{\frac{d}{2}} \, \lambda_1^{BM}(B(0,1)), \quad \text{for almost all $\omega$,}\]
where $\lambda_1^{BM}(B(0,1))$ is the principal eigenvalue of the Brownian motion killed on leaving the unit ball, $\omega_d$ is the Lebesgue measure of a unit ball and $\rho>0$ corresponds to $V^{\omega}$. We also identify  two interesting regime changes ('transitions') in the growth properties of the rates $\eta(t)$ as the intensity of large jumps of the processes varies from polynomial to higher order, and eventually to stretched exponential order.
}

\bigskip
\noindent
\emph{Key-words and phrases}: symmetric L\'evy process, random nonlocal Schr\"odinger operator, parabolic nonlocal Anderson model, Feynman-Kac semigroup, random Poissonian potential, principal (ground state) eigenvalue, integrated density of states, annealed asymptotics, quenched asymptotics, relativistic process

\bigskip
\noindent
2010 {\it MS Classification}: Primary 60G51, 60H25, 60K37; Secondary 47D08, 60G60, 47G30

\end{abstract}

\footnotetext{
Research supported by the National Science Centre (Poland) internship
grant on the basis of the decision No. DEC-2012/04/S/ST1/00093 and by the Foundation for Polish Science.}

\maketitle

\baselineskip 0.5 cm


\section{Introduction}
This paper is concerned with the large time asymptotic behaviour of the solutions
of the spatially continuous parabolic nonlocal Anderson problem with Poissonian interaction,
driven by a L\'evy process in $\R^d$.
More precisely, we consider the equation
\begin{equation}\label{eq:intro1}
\partial_t u = Lu-V^\omega u, \quad  u(0,x)\equiv 1,
\end{equation}
where $L$ is the generator of the underlying process
and $V^\omega(x)=\int_{\mathbb R^d} W(x-y)\mu^\omega({\rm d}y)$ is a random Poissonian potential with
sufficiently regular profile function $W:\mathbb R^d\to\mathbb R_+$. By $\mu^\omega$ we denote the
Poisson random measure on $\mathbb R^d$ with intensity  $\rho \,{\rm d}x$, $\rho>0$,
over a given probability space $(\Omega, \qpr)$.

Processes considered  throughout the paper, $X=(X_t, \pr_x)_{t \geq 0,\ x \in \R^d},$ are symmetric L\'evy processes {with jumps}, with
characteristic functions
$$
\ex_0 \left[e^{i \xi \cdot X_t}\right] = e^{-t \psi(\xi)}, \quad \xi \in \R^d, \ t>0,
$$
whose characteristic exponents {(symbols)} $\psi$ are given by the L\'evy-Khintchine formula
\begin{align} \label{eq:Lchexp}
\psi(\xi) = \xi \cdot A \xi + \int_{\R^d\setminus \{0\}} (1-\cos(\xi \cdot z)) \nu({\rm d}z).
\end{align}
Here $A=(a_{ij})_{1 \leq i,j \leq d}$ is a symmetric non-negative definite matrix, and $\nu$ is a symmetric L\'evy
measure, {i.e. a Radon measure on $\R^d \backslash \left\{0\right\}$} that satisfies $\int_{\R^d\setminus \{0\}} (1 \wedge |z|^2) \nu({\rm d} z) < \infty$ and
$\nu(E)= \nu(-E)$, for every Borel $E \subset \R^d \backslash \left\{0\right\}$ \cite{bib:J, bib:JSch}.
{We always assume that $X$ is strong Feller and ${\rm e}^{-t_0 \psi(\cdot)} \in L^1(\R^d)$, for some $t_0>0$
(for more details see Section \ref{subsec:levy-proc}).}

Since its introduction in the 50's of the past century, the  parabolic Anderson model  based on the Laplacian (both continuous and discrete), with various potentials,  has been studied with varying intensity. For an excellent review of the history of the research in
this area we refer to the book of K\"{o}nig \cite{bib:Kon-Wolff}.

Under suitable regularity assumptions, the solution to the problem \eqref{eq:intro1} can be probabilistically represented by means
of the Feynman-Kac formula:
\begin{equation}\label{eq:functional-u}
u^\omega(t,x)=\mathbf E_x\left[{\rm e}^{-\int_0^t V^\omega(X_s)\,{\rm d}s}\right].
\end{equation}

One is interested in the long-time behaviour of $u^\omega(t,x),$
in both the  annealed sense
(averaged with respect to $\mathbb Q$) and the  quenched
sense (almost sure with respect to $\mathbb Q$).
In this paper, we will analyse the quenched behaviour of functionals $u^\omega(t,x)$ for L\'{e}vy processes whose exponent $\psi$
can be written as
\begin{equation}\label{eq:psi-okura}
\psi(x)=\psi^{(\alpha)}(x) +o(|x|^\alpha), \quad |x|\to 0,
\end{equation}
for some $\alpha \in (0,2],$
 and satisfies some mild assumptions concerning its behaviour at infinity. In formula (\ref{eq:psi-okura}), $\psi^{(\alpha)}$ is the
 characteristic exponent of a symmetric (not necessarily isotropic) $\alpha-$stable process,  i.e.  a L\'evy process with characteristic exponent
\begin{equation}\label{eq:psi-stable}\psi^{(\alpha)}(\xi)=\int_0^\infty\int_{S^{d-1}} \frac{1-\cos(\xi\cdot rz)}{r^{1+\alpha}}\,n({\rm d}z){\rm d}r,
\end{equation}
where $n$ is a symmetric finite measure on unit sphere $S^{d-1}$ when $\alpha\in(0,2),$ or
\begin{equation}\label{eq:psi-diff}
\psi^{(\alpha)}(\xi)= \xi \cdot A \xi, 
\end{equation} where $A=(a_{ij})_{1 \leq i,j \leq d}$ is a symmetric nonnegative definite matrix when $\alpha=2.$
When $n$ is the uniform distribution on $S^{d-1}$ for $\alpha \in (0,2)$ or $A \equiv a \Id$ with some $a>0$ for $\alpha = 2$, then the process is called \emph{isotropic $\alpha-$stable.} We assume the nondegeneracy condition $\inf_{|\xi|=1} \psi^{(\alpha)}(\xi) > 0$.

The annealed
asymptotics of $u^\omega(t,x)$ has been {first analyzed} by Donsker-Varadhan \cite{bib:Don-Var} (for stable processes, including the Brownian motion) and of Okura {\cite{bib:Okura81}} (for {symmetric} L\'{e}vy processes satisfying (\ref{eq:psi-okura})). When the profile $W$ is of order $o(1/|x|^{d+\alpha})$ when $|x|\to\infty,$
they prove that
\begin{equation}\label{eq:dv}
\lim_{t\to\infty}\frac{\mathbb {\log E_Q}\left[u^\omega(t,x)\right]}{t^{d/(d+{{\alpha}})}}= - (\rho \omega_d)^{\frac{\alpha}{d+\alpha}}\left(\frac{d+\alpha}{\alpha}\right)\left(\frac {2{\lambda_{(\alpha)}}}{d}\right)^{\frac{d}{d+\alpha}},
\end{equation}
In this formula, $\omega_d$ is the volume of the unit ball, and \[
{\lambda_{(\alpha)}}=\inf_{U\,\mbox{\small open},\, |U|=\omega_d }{\lambda^{(\alpha)}_1(U)}\] denotes
the infimum of principal eigenvalues for the symmetric $\alpha-$stable process with exponent (\ref{eq:psi-stable}) in $U$ with outer Dirichlet conditions on $U^c.$  Okura's work covers also the case when $\psi(x)=O(\psi^{(\alpha)}(x)),$ $|x|\to 0,$ but only when the potential is heavy-tailed.  This falls not within the scope of present paper and so we will {discuss this case elsewhere}.

The key observation used in the quenched case is that
when the profile function $W$ is of bounded support, then $\mathbb Q-$a.s. there exist large areas with no potential interaction.
Typically, with high probability,  the process
tends to remain in those `atypical', `favorable' areas, which affects the a.s. behaviour of the functional.  As a result,  the quenched behaviour can differ from the annealed asymptotics.

This phenomenon (for the Brownian motion only) was first observed and rigorously established by  Sznitman in \cite{bib:Szn-ptrf93}. He proves that in that case, for any $x\in\mathbb R^d$, and $\mathbb Q-$almost all $\omega,$
\begin{equation}\label{eq:szn-quenched}
\lim_{t\to\infty}\frac{\log u^\omega(t,x)}{t/(\log t)^{2/d}} = -\left(\frac{\rho \omega_d}{d}\right)^{\frac{2}{d}}\,{\lambda_1^{BM}(B(0,1))},
\end{equation} where ${\lambda_1^{BM}(B(0,1))}$ is  the principal eigenvalue
for the Brownian motion killed on exiting $B(0,1).$ This result was reproven by Fukushima \cite{bib:Fuk}.
For the Brownian motion on some irregular spaces such as the  Sierpi\'{n}ski gasket,
one also sees a similar phenomenon: rates of the annealed and the quenched asymptotics differ (see \cite{bib:kpp-ptrf,bib:kpp-spa}).

In this paper, we address the quenched asymptotics for L\'{e}vy processes with jumps influenced by potentials with compact-range profiles. Key examples include a vast selection of isotropic unimodal L\'evy processes, subordinate Brownian motions,
processes with nondegenerate Brownian components and with non-isotropic L\'evy densities
as well as processes with less regular L\'evy measures that have product or discrete large jumps components.
While the 'favorable' spots in the {Poissonian} configuration are still present,
the jumping nature of L\'{e}vy processes drives the process out of
those spots: if the process does not stay there long enough, then
the effect of `no-potential-interaction' is spoiled and as a consequence the quenched rate can be the same as the annealed rate.
What is decisive here is the intensity of long jumps of the process:
for processes with L\'{e}vy measures whose tails decay fast enough at infinity,
we see the same phenomenon as that for Brownian motion.

For more clarity, we have collected the results obtained for particular classes of processes with
various types of large jump intensities in Table 1 below
(for simplicity we restricted the presentation to the family of isotropic unimodal L\'evy processes with stable-type small jumps).

The annealed rate is always governed by the exponent $\alpha$ appearing in \eqref{eq:psi-okura}, which is determined by the
behaviour of the exponent of $\psi$ near zero. Formula \eqref{eq:psi-okura} together with some mild assumptions concerning the behaviour of the symbol at infinity permit to obtain the annealed asymptotics  of $u^\omega(t,x)$ and also
to identify the constant in (\ref{eq:dv}).

The question of the quenched rate is much more delicate.
{In this case, the formula \eqref{eq:psi-okura} (even if combined with some information on the behaviour of the characteristic exponent at infinity) is generally insufficient. This is particularly evident when $\alpha=2$. It occured to us as a surprise that the effective derivation of the quenched rate (and the corresponding bounds) requires deep analysis of the subtle properties of L\'{e}vy processes
with prescribed L\'{e}vy measures, depending on the type of their fall-off at infinity.}

As usual, in this paper the upper and the lower bounds of $u^\omega(t,x)$ are addressed separately. First,
 in Sections \ref{sec:upper} and \ref{sec:lower} we prove two general results: Theorem \ref{thm:upper} concerning the upper bound, and Theorem \ref{thm:lower_bound} concerning the lower bound.

 The rest
 of the paper (Section \ref{sec:specific}) is devoted to the
 application of our  general results  for
specific classes of processes.

(1) For processes {satisfying \eqref{eq:psi-okura}} with $\alpha\in(0,2)$ {(Theorem \ref{thm:polynomial_irr} and Examples \ref{ex:polynomial_irr}-\ref{ex:polynomial_irr_2})}, and also for those with $\alpha=2$ but {polynomially} decaying L\'{e}vy measures {(Theorem \ref{thm:polynomial} and Examples \ref{ex:polynomial}, \ref{ex:ex_irr} (2))}, the quenched and annealed rates coincide and are both equal
to $t^{d/(d+\alpha)}.$

\begin{sidewaystable}
\vspace{17cm}
\hspace{-0.4cm}
  \begin{tabular}{llrrrr}
    \toprule
    	\textbf{intensity of jumps / process} & \textbf{parameters} & \ \textbf{rate $\eta(t)$}  & \textbf{lower bound for $\liminf_{t \to \infty} \frac{\log u^{\omega}(t,x)}{\eta(t)}$} & \textbf{upper bound for $\limsup_{t \to \infty} \frac{\log u^{\omega}(t,x)}{\eta(t)}$} & \textbf{limit}\\
			    \midrule
		& &&& \\
    $\frac{C}{r^{d+\alpha}}$ & $\alpha \in (0,2)$
		& $t^{\frac{d}{d+\alpha}}$
		& $ - \frac{4\alpha+9d}{2}\left(\frac{2}{d+2\alpha}\right)^{\frac{d}{d+\alpha}}
        \left(\frac{\rho\omega_d}{d}\right)^{\frac{\alpha}{d+\alpha}}\left(\lambda_{1,\nu}^{(\alpha)}\right)^{\frac{d}{d+\alpha}}$
    & $ - \alpha \left(\frac{2}{d+2\alpha}\right)^{\frac{d}{d+\alpha}}
        \left(\frac{\rho\omega_d}{d}\right)^{\frac{\alpha}{d+\alpha}}\left(\lambda_{1,\nu}^{(\alpha)}\right)^{\frac{d}{d+\alpha}}$
		& no \\
		&&&& \\
    \midrule
		& $\alpha \in (0,2)$ &&& \\
    $\frac{C}{r^{d+\alpha}} \1_{\left\{r \leq 1\right\}} + \frac{C}{r^{d+\delta}} \1_{\left\{r > 1\right\}}$ & $\delta >2$
		& $t^{\frac{d}{d+2}}$
		& $ - \frac{4\delta+9d}{2}\left(\frac{2}{d+2\delta}\right)^{\frac{d}{d+2}}
        \left(\frac{\rho\omega_d}{d}\right)^{\frac{2}{d+2}}\left(\lambda_{1,\nu}^{(2)}\right)^{\frac{d}{d+2}}$
    & $ - \delta \left(\frac{2}{d+2\delta}\right)^{\frac{d}{d+2}}
        \left(\frac{\rho\omega_d}{d}\right)^{\frac{2}{d+2}}\left(\lambda_{1,\nu}^{(2)}\right)^{\frac{d}{d+2}}$
		& no \\
		&&&& \\
    \midrule
   	& $\alpha \in (0,2)$ &&& \\
   $\frac{C}{r^{d+\alpha}} \1_{\left\{r \leq 1\right\}} + \frac{C}{e^{\theta(\log r)^{\beta}}} \1_{\left\{r > 1\right\}}$ & $\theta > 0$
	  & $t^{\frac{d\beta}{d\beta+2}}$
		& $- 2 \theta^{\frac{2}{2 + d\beta}}\left(\frac{\omega_d \rho}{d}\right)^{\frac{2 \beta }{2 + d \beta}} \left(\lambda_{1,\nu}^{(2)}\right)^{\frac{d \beta }{2 + d \beta}}$
		& $- \theta^{\frac{2}{2 + d\beta}}\left(\frac{\omega_d \rho}{d}\right)^{\frac{2 \beta }{2 + d \beta}} \left(\lambda_{1,\nu}^{(2)}\right)^{\frac{d \beta }{2 + d \beta}}$
		& no \\
		& $\beta > 1$ &&& \\
    \midrule
		& $\alpha \in (0,2)$ &&& \\
   $\frac{C}{r^{d+\alpha}} \1_{\left\{r \leq 1\right\}} + \frac{C}{e^{\theta (r-1)^{\beta}}} \1_{\left\{r > 1\right\}}$ & $\theta > 0$
	  & $\frac{t}{(\log t)^{2/d}}$
		& $- \beta^{\frac{2}{d}} \, \left(\frac{\rho \omega_d}{d}\right)^{\frac{2}{d}}\lambda_{1,\nu}^{(2)}$
		& $- \beta^{\frac{2}{d}} \, \left(\frac{\rho \omega_d}{d}\right)^{\frac{2}{d}}\lambda_{1,\nu}^{(2)}$
		& \textbf{yes} \\
		& $\beta \in (0,1)$ &&& \\
		  \midrule
    & $\alpha \in (0,2)$ &&& \\
   $\frac{C}{r^{d+\alpha}} \1_{\left\{r \leq 1\right\}} + \frac{C}{e^{\theta (r-1)^{\beta}}} \1_{\left\{r > 1\right\}}$ & $\theta > 0$
	  & $\frac{t}{(\log t)^{2/d}}$
		& $- \left(\frac{\rho \omega_d}{d}\right)^{\frac{2}{d}}\lambda_{1,\nu}^{(2)}$
		& $- \left(\frac{\rho \omega_d}{d}\right)^{\frac{2}{d}}\lambda_{1,\nu}^{(2)}$
		& \textbf{yes} \\
		& $\beta \geq 1$ &&& \\
		  \midrule
    & &&& \\
   $\frac{C}{r^{d+\alpha}} \1_{\left\{r \leq 1\right\}}$ & $\alpha \in (0,2)$
	  & $\frac{t}{(\log t)^{2/d}}$
		& $- \left(\frac{\rho \omega_d}{d}\right)^{\frac{2}{d}}\lambda_{1,\nu}^{(2)}$
		& $- \left(\frac{\rho \omega_d}{d}\right)^{\frac{2}{d}}\lambda_{1,\nu}^{(2)}$
		& \textbf{yes} \\
		& &&& \\
		  \midrule
     	& &&& \\
   Brownian motion &
	  & $\frac{t}{(\log t)^{2/d}}$
		& $- \left(\frac{\rho \omega_d}{d}\right)^{\frac{2}{d}}\lambda_1^{BM}$
		& $- \left(\frac{\rho \omega_d}{d}\right)^{\frac{2}{d}}\lambda_1^{BM}$
		& \textbf{yes} \\
		& &&& \\
    \bottomrule
  \end{tabular}
	\smallskip
 \caption{Rate functions $\eta(t)$ and bounds for $\liminf_{t \to \infty} \frac{\log u^{\omega}(t,x)}{\eta(t)}$ and $\limsup_{t \to \infty} \frac{\log u^{\omega}(t,x)}{\eta(t)}$ for specific isotropic L\'evy processes. First six examples are pure jump processes with L\'evy-Khintchine exponents as in \eqref{eq:Lchexp} with $A \equiv 0$ and $\nu(dx)=\nu(|x|)dx$, where $\nu(r)$ are subsequent profiles given in the first column. Here $\lambda_{1,\nu}^{(\alpha)}$ and $\lambda_{1,\nu}^{(2)}$ denote the principal eigenvalues for the given stable process (in the first line) and diffusions determined by Gaussian matrices as in \eqref{eq:lambda_for_A_nu} (in the next five lines), killed on leaving the ball $B(0,1)$. We compare these examples with the case of Brownian motion which is included in the last line. In the last column we indicate for which processes the convergence follows.
}
\label{tab:tab}

\end{sidewaystable}

(2) If the L\'{e}vy measure decays stretched exponentially or faster (one must necessarily have $\alpha=2$ in this case), then the annealed rate is
$t^{d/(d+2)},$ while the quenched rate is bigger and equal to $t/(\log t)^{2/d}.$ This is the same rate as that for the Brownian motion. In this case, we not only identify the quenched rate, but also often obtain the limit (Theorem \ref{thm:exp} and Examples \ref{ex:exp}, \ref{ex:ex_irr} (1)). This case covers many examples of processes that are of interests in mathematical physics and in technical sciences, including the relativistic $\alpha-$stable process and some tempered stable processes \cite{bib:CMS90, bib:KL14}.

(3) We also consider {a class of processes with L\'{e}vy measures that have} intermediate decay: slower than {stretched} exponential, but
still faster than polynomial (Theorem \ref{thm:exp_log}). The annealed rate is {\it perforce} equal to $t^{d/(d+2)}$, but the quenched rate
obtained is $t^{(\beta d)/(2+\beta d)},$ $\beta $ being a parameter
specific to the process.

It is seen from this picture that the two interesting regime changes ('transitions') in the growth properties of the quenched rates occur. The first one can be observed when the intensity of large jumps of the processes varies from polynomial to higher order, in the sense that the quenched rate becomes faster than the annealed rate (i.e. it is no longer consistent with the annealed rate and becomes heavily dependent on the decay of the intensity of large jumps of the process). The second transition occurs as the intensity of large jumps becomes stretched exponential or faster. In this case, the long jumps intensity-driven quenched rate takes the form $t/(\log t)^{2/d}$, which is the quickest possible one, obtained also for Brownian motion. It is worth to point out that similar large jumps intensity-dependent transition in the ground state fall-off properties of the nonlocal Schr\"odinger operators has been recently identified in \cite{bib:KL15}

The verification of the assumptions of {our general Theorems \ref{thm:upper} and \ref{thm:lower_bound} for various types of L\'evy measures (i.e. in each of the situations (1)-(3) above)} requires a separate analysis. The applicability of our results essentially depends on the verifiability of the assumption \textbf{(U)} preceding Theorem \ref{thm:upper}. It asserts the existence of the profile function $F(r)$ that dominates the tail $\pr_0(|X_t|>r)$ for large $r$. This profile plays a crucial role in determining the quenched rate and therefore, in applications, it is a key initial step to establish it as precisely as possible. It does not come as a surprise that such a profile should be determined by the tail of the corresponding L\'evy measure. When \eqref{eq:psi-okura} holds with $\alpha\in(0,2)$, then the corresponding profiles $F(r)$ are derived by using the general estimates for the tails of the supremum functional obtained in \cite{bib:Pru}. When $\alpha = 2$, the problem is more complicated and it requires an application of the sharp estimates of the transition probability densities that are available in the literature. For L\'evy measures with stretched exponential and lighter tails, we apply directly the results of \cite{bib:CKK} while for those with polynomial and intermediate tails we use the estimates obtained recently in \cite{bib:KS14} (Lemmas \ref{lem:density_estimate}-\ref{lem:tran_dens_ub}). The case of jump processes with non-degenerate Gaussian components is discussed separately in Proposition \ref{prop:gauss_est}. Another key step in application of our general lower bound was to find a possibly sharpest lower estimate for the Dirichlet heat kernels of the large box which leads to sufficiently precise lower bound of the function $G$ defined in \eqref{eq:G-def}. For processes with L\'evy measures whose tails decay at infinity not faster than exponentially this is established in Proposition \ref{prop:dirichlet-by-nu}. The cases with lighter tails require an application of more specialized estimates obtained in \cite{bib:KiKi}.

At the end of the Introduction, let's say a few words about how the
general theorems Th. \ref{thm:upper}, Th. \ref{thm:lower_bound} are obtained.
{To the best of our knowledge the quenched asymptotics for L\'evy processes with jumps has not been studied before. }
In the literature {concerning the Brownian motion}, one finds two methods:
 Sznitman's paper {\cite{bib:Szn-ptrf93}} estimates $u^\omega(t,x)$ directly, using his `enlargement of obstacles' technique for the more difficult upper bound (similar method was used on the Sierpi\'{n}ski gasket in \cite{bib:kpp-ptrf});
 Fukushima \cite{bib:Fuk} gives elegant
arguments for deriving both the upper and the lower quenched bound from respective upper and lower bounds at zero for the integrated density of states of the {corresponding Schr\"odinger operator} (being closely related to the annealed upper and lower bounds) - this is done by means of the Dirichlet-Neumann bracketing for the
Laplace operator.
In our work, we are able to {find a counterpart of Fukushima's method for L\'evy processes with jumps} to obtain the upper bounds.  As the Dirichlet-Neumann bracketing {seems not to be available in the nonlocal case}, we had to use a different approach for the lower
bound. The lower estimate of $u^\omega(t,x)$ we provide is proven
directly, without using any properties of the annealed limits.

\section{Preliminaries}
\subsection{L\'{e}vy processes}\label{subsec:levy-proc}

Recall that $X=\pro X$ is assumed to be a symmetric jump L\'evy process in $\R^d$, $d \geq 1$, with L\'evy-Khintchine exponent $\psi$ as in \eqref{eq:Lchexp}. We will always assume that $X$ is strong Feller and
\begin{align} \label{eq:int}
\text{there exists \ $t_0>0$ \ for which \ ${\rm e}^{-t_0 \psi(\cdot)} \in L^1(\R^d)$.}
\end{align}
Note that the strong Feller property is equivalent to the existence
of measurable transition densities\linebreak $p(t,x,y)=p(t,y-x)$ for the process (see e.g. \cite[Th. 27.7]{bib:Sat}), while \eqref{eq:int} guarantees that\linebreak
$\sup_{x \in \R^d} p(t,x) = p(t,0) \leq p(t_0,0)  < \infty$, for any $t \geq t_0$.

Consequently, $X$ is  strong Markov with respect to its natural filtration and has a modification with c\`adl\`ag paths.
The c\`adl\`ag property will be assumed throughout the paper.
For more details on L\'evy processes we refer to \cite{bib:Sat,bib:J,bib:JSch,bib:App}.

The generator $L$ of the process $\pro X$ is a nonlocal pseudodifferential operator uniquely determined by its Fourier transform
\begin{align} \label{def:gen}
\widehat{L f}(\xi) = - \psi(\xi) \widehat{f}(\xi), \quad \xi \in \R^d, \quad f \in \mathcal D(L),
\end{align}
where $\mathcal D(L)=\left\{f \in L^2(\R^d): \psi \widehat f \in L^2(\R^d) \right\}$.
It is a negative-definite self-adjoint operator with a core $C_0^{\infty}(\R^d)$
such that  for $f \in C_0^{\infty}(\R^d)$
$$
L f(x) = \sum_{i,j=1}^d a_{ij} \frac{\partial^2 f}{\partial x_j \partial x_i} (x) + \int_{\R^d\setminus \{0\}}\left(f(x+z) - f(x) - z \cdot \nabla f(x) 1_{\{|z| \leq 1\}}(z)\right) \nu(dz), \quad x \in \R^d.
$$
The corresponding Dirichlet form $(\cE, D(\cE))$ can be defined by
\begin{align} \label{def:qform}
\cE(f,g) = \int_{\R^d} \psi(\xi) \widehat{f}(\xi) \overline{\widehat{g}(\xi)} d\xi, \quad f, g \in D(\cE),
\end{align}
with $D(\cE)=\left\{f \in L^2(\R^d): \int_{\R^d} \psi(\xi) |\widehat{f} (\xi)|^2 d\xi < \infty \right\}$.
It holds that $\cE(f,g)=(-Lf,g)$, for $f \in \mathcal D(L)$ and $g\in\mathcal D(\mathcal E).$

The transition densities $p^U(t,x,y)$ of the process killed upon exiting an open, bounded set $U \subset \R^d$ are given by the Dynkin-Hunt formula
\begin{align}
\label{eq:HuntF}
p^U(t,x,y)=p(t,x,y) - \ex_x\left[ \tau_U < t; p(t-\tau_U, X_{\tau_U},y)\right], \quad x , y \in U.
\end{align}
Here and thereafter, $\tau_U = \inf\left\{t \geq 0: X_t \notin U\right\}$ denotes the  exit time of the process from the set $U$.
The $L^2-$semigroup of operators with kernel $p^U(\cdot,\cdot,\cdot),$
also called the Dirichlet semigroup, will be denoted by $\big\{P^U_t: t\geq 0\big\}.$
Since $U$ is bounded, the operators $P^U_t$ are trace-class (consequently, compact) and admit a complete set of positive eigenvalues
\[\lambda_1(U)<\lambda_2(U)\leq \lambda_3(U)\ldots \to\infty.\]
Sometimes, to specify which process we are working with, these eigenvalues will be denoted by $\lambda_i^\psi(U),$ where $\psi$ is the L\'evy exponent of $\pro X.$ In the special case of symmetric $\alpha$-stable processes, $\alpha\in(0,2],$ its corresponding Dirichlet form will be denoted by $(\mathcal E^{(\alpha)},\mathcal D(\mathcal E^{(\alpha)})),$ and the eigenvalues of the Dirichlet semigroup -- by $\lambda^{(\alpha)}_i(U).$ For the standard Brownian motion running at twice the speed, we will use the notation $(\mathcal E^{BM},\mathcal D(\mathcal E^{BM}))$ and $\lambda^{BM}_i(U),$ respectively.

 \subsection{Poisson potentials}
 The process $X$ will be subject to interaction with a nonnegative,
 random Poissonian potential $V^\omega.$
 To properly set the assumptions, recall that the Kato class relative to $X,$ $\mathcal K^X,$ consists of those measurable functions $V:\mathbb R^d\to\mathbb R$ for which
\begin{align} \label{eq:Kato-def}
\lim_{t\to 0}\sup_{x\in \mathbb R^d}\int_0^t \mathbf E_x|V(X_s)|\,{\rm d} s=0,
\end{align}
 and the local Kato class $\mathcal K_{\rm loc}^X$ -- of functions $V$ such that for every ball $B=B(x,r)$ the function $V\cdot\mathbf 1_{B}\in\mathcal K^X.$ We always have $L_{\rm loc}^\infty(\R^d) \subset \mathcal K_{\rm loc}^X\subset L^1_{\rm loc}(\mathbb R^d).$
The condition defining the Kato class can be reformulated in terms of the kernel $p(t,x)$ restricted to small $t$ and small $x$: it is shown in
\cite[Corollary 1.3]{bib:GrzSz} that \eqref{eq:Kato-def} is equivalent to
\begin{align} \label{eq:Kato_new}
\lim_{t \to 0^{+}} \sup_{x \in \R^d} \int_0^t \int_{B(x,t)} p(s,x-y)|V(y)|\, {\rm d}y {\rm d}s = 0.
\end{align}
Sharp estimates of $p(t,x)$ that are available in the literature (see e.g. \cite{bib:KS14, bib:KS13, BGR13, bib:CKK}) often allow to find more explicit form of \eqref{eq:Kato_new}.

Further, let $\mathcal N$ be a Poisson point process on $\mathbb R^d$, with intensity $\rho\, {\rm d}x,$ $\rho>0,$  defined on some probability space $(\Omega, \mathcal M, \mathbb Q),$
and let $W:\mathbb R^d\to \mathbb R_+$ be a {$\mathcal K_{\rm loc}^X$ -- function satisfying
\begin{align} \label{eq:int_W}
\sup_{x \in B(0,R)} W(x - \cdot) \in L^1(B(0,2R)^c), \quad \text{for every \ $R>1$.}
\end{align}
Then define
\begin{align} \label{eq:pot}
V^{\omega}(x) = \int_{\R^d} W(x-y) \mu^{\omega}({\rm d}y),
\end{align}
where $\mu^{\omega}$ is the random counting measure on $\mathbb R^d$ corresponding to the Poisson point process $\mathcal N.$
For such profiles $W,$ the potential $V^\omega(\cdot)$ belongs $\mathbb Q-$almost surely to $\mathcal K_{\rm loc}^X.$ This can be directly justified by following the argument in \cite[Proposition 2.1]{bib:Kal-Pie-SPA}, where it has been proven for the subordinate Brownian motions on the Sierpi\'{n}ski gasket. 
One can check that when the profile $W$ is continuous, or when it is a nonincreasing function of the Euclidean distance, then the condition \eqref{eq:int_W} is satisfied under the assumption $W \in L^1(\R^d)$. Starting from Section \ref{sec:lower} we will be interested in the Poissonian potentials with finite-range (compactly supported) profiles $W$, for which \eqref{eq:int_W} holds automatically. By the range of a profile $W$ we mean $a:=\inf\left\{r > 0: W(x)=0 \ \mbox{for Lebesgue-almost all $x \in B(0,r)^c$ }\right\}$.

\subsection{Random semigroups and the integrated density of states}
Suppose that $W:\mathbb R^d\to\mathbb R_+$ is a profile function
belonging to {$\mathcal K_{\rm loc}^X$ for which \eqref{eq:int_W} holds.} As indicated above, $V^\omega$ given by (\ref{eq:pot}) belongs to $\mathcal K_{\rm loc}^X,$ $\mathbb Q-$almost surely.
{Therefore we can legitimately define the random Feynman-Kac semigroups $\big\{P_t^{V^\omega}: t\geq 0\big\}$ and $\big\{P_t^{U,V^\omega}: t\geq 0\big\}$ related to the 'free' process and the process killed on exiting an open, bounded and nonempty set $U \subset \mathbb R^d$. They consist of operators
$$P_t^{V^\omega}f(x)= \mathbf E_x\left[f(X_t){\rm e}^{-\int_0^t V^\omega(X_s)\,{\rm d}s}\right], \quad f \in L^2(\R^d), \ t >0,$$
and
$$P_t^{U,V^\omega}f(x)= \mathbf E_x\left[f(X_t){\rm e}^{-\int_0^t V^\omega(X_s)\,{\rm d}s}\mathbf 1_{\{\tau_U>t\}}\right], \quad f \in L^2(U), \ t >0,$$
and admit the measurable, strictly positive, bounded and symmetric kernels $p^{V^\omega}(t,x,y)$ and $p^{U,V^\omega}(t,x,y)$, respectively. It is known that $\mathbb Q-$a.s. the semigroup $\big\{P_t^{V^\omega}: t\geq 0\big\}$ coincides with the semigroup generated by the operator $-H^{\omega}$, where $H^{\omega} = -L + V^{\omega}$ is the \emph{random nonlocal Schr\"odinger operator} based on the  generator $L$ of the process $X$, with Poissonian potential $V^{\omega}$ \cite{DC, bib:CL}. The semigroup $\big\{P_t^{U,V^\omega}: t\geq 0\big\}$ corresponds then to the random nonlocal Schr\"odinger operator $H^{\omega}_U$ with exterior Dirichlet  conditions on $U$. The operators $P_t^{U,V^\omega}$ are Hilbert-Schmidt,} so that the spectrum of the operator {$H^{\omega}_U$} is
$\mathbb Q-$a.s. discrete:
\[\lambda_1(U,V^\omega)<\lambda_2(U,V^\omega)\leq \lambda_3(U,V^\omega)\ldots\to \infty.\]
Again, we will single out  the case of $\alpha-$stable processes and denote the respective eigenvalues by $\lambda_i^{(\alpha)}(U,V^\omega).$
Similarly, $P^V_t$ and $P^{U,V}_t$ will denote operators relative
to nonrandom potentials $0\leq V\in\mathcal K^X_{\rm loc}.$

Consider now the process killed on exiting the boxes $U=U_R=(-R,R)^d,$
 and the random empirical measures on $\mathbb R_+,$ based on the spectra the generators of such processes, normalized by the volume:
 \begin{equation}\label{eq:ids}
 \ell_R^\omega:=\frac{1}{(2R)^d}\sum_{n=1}^\infty \delta_{\lambda_n(U_R,V^\omega)}.\end{equation}

From the maximal ergodic theorem it follows that $\mathbb Q-$a.s. the measures $\ell^\omega_R$ are vaguely convergent as $R\to\infty$ to a nonrandom measure $\ell$ on $\mathbb R_+,$ called the {\em integrated density of states} (see e.g. \cite[p. 635]{bib:Okura}). The cumulative distribution function of the measure
$\ell$ will be denoted by $N^D(\lambda).$ The superscript $D$ indicates that we are dealing with the Dirichlet {exterior} conditions (as opposed to the {Neumann conditions, which are not pursued in this paper}).

\subsection{Notation}

We say that the function $g$ is asymptotically equivalent to the function $f$ at infinity, which is denoted by $g \approx f$, when
$\lim_{x \to \infty} f(x)/g(x) = 1$. Likewise, when we say $f\asymp g,$ we mean that there {exists a constant $C \in [1,\infty),$}  such that $\frac{1}{C}\,g(x)\leq f(x)\leq C g(x)$  for all relevant arguments $x$ (the range will be clear from the context).
For an open set $U\subset \mathbb R^d,$  $C_c^\infty(U)$ stands for $C^\infty$--functions with compact support inside $U.$
$B(x,R)$ denotes the open Euclidean ball with center $x$ ans radius $R>0.$
We also say that a measurable function $W:\mathbb R^d\to\mathbb R_+$ is not identically zero, if $|\{x\in \mathbb R^d: W(x)>0\}|>0$ (by $|U|$ we denote the Lebesgue measure of the set $U$).
Important constants are denoted with upper case letters $C,K,Q$, possibly with subscripts.
Technical constants are numbered
within each proof separately as $c_1,c_2,...$.

\section{The upper bound}\label{sec:upper}

\subsection{Preliminary estimates}
We start with two preliminary results. First, a lemma, proven for nonrandom potentials. Recall that the constant $t_0$ comes from the assumption \eqref{eq:int}.

\begin{lemma} \label{lem:lem1}
Let $(X_t)_{t\geq 0}$ be a symmetric {strong Feller} L\'evy process with  L\'evy-Khintchine exponent $\psi$ as in {\eqref{eq:Lchexp} and \eqref{eq:int}}, and let $0 \leq V \in \cK_{\loc}^X$. Then there exists {a constant $C_1 = C_1(X,d)$} such that for any open,
nonempty set $U \subset \R^d$ one has
\begin{equation}\label{eq:l1}
P^V_t \1(x) \leq C_1 {|U|}^{1/2} \, {\rm e}^{-\lambda_1(U,V)(t-t_0/2)}+ \pr_x[\tau_U \leq t], \quad x \in U, \ \ t > t_0/2.
\end{equation}
\end{lemma}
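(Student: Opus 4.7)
The plan is a standard splitting-plus-semigroup argument. First I would decompose the Feynman--Kac functional according to whether the process has left $U$ by time $t$:
\[
P^V_t\1(x) = \ex_x\!\left[e^{-\int_0^t V(X_s)\,ds};\,\tau_U>t\right] + \ex_x\!\left[e^{-\int_0^t V(X_s)\,ds};\,\tau_U\leq t\right].
\]
The second term is trivially bounded by $\pr_x[\tau_U\leq t]$, which is already one of the two summands on the right-hand side of \eqref{eq:l1}. The first term is, by definition of the killed Feynman--Kac semigroup, exactly $P_t^{U,V}\1(x)$, so the task reduces to proving
\[
P_t^{U,V}\1(x) \leq C_1|U|^{1/2}\,e^{-\lambda_1(U,V)(t-t_0/2)}.
\]

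To obtain this bound I would combine ultracontractivity of the kernel at time $t_0/2$ with the spectral bound of the $L^2$-operator norm beyond that time. Writing $t=t_0/2+(t-t_0/2)$ and using the Chapman--Kolmogorov identity for $p^{U,V}$, I rewrite
\[
P_t^{U,V}\1(x) = \int_U p^{U,V}\!\bigl(t_0/2,x,z\bigr)\,P^{U,V}_{t-t_0/2}\1(z)\,dz
= \bigl\langle p^{U,V}(t_0/2,x,\cdot),\,P^{U,V}_{t-t_0/2}\1\bigr\rangle_{L^2(U)},
\]
and then apply the Cauchy--Schwarz inequality.

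To bound the first factor, I use symmetry of the kernel and the semigroup property:
\[
\bigl\|p^{U,V}(t_0/2,x,\cdot)\bigr\|_{L^2(U)}^{2}
= \int_U p^{U,V}(t_0/2,x,z)\,p^{U,V}(t_0/2,z,x)\,dz
= p^{U,V}(t_0,x,x),
\]
and then use the domination $p^{U,V}\leq p^U\leq p$, valid because $V\geq 0$, together with translation invariance and the assumption \eqref{eq:int} to conclude $p^{U,V}(t_0,x,x)\leq p(t_0,0)<\infty$. This gives the $X$-dependent constant $C_1 = p(t_0,0)^{1/2}$ (depending only on $\psi$ and $d$). For the second factor, since $H^V_U$ is self-adjoint and nonnegative with smallest eigenvalue $\lambda_1(U,V)$, the spectral theorem yields $\|P^{U,V}_{t-t_0/2}\|_{L^2\to L^2}\leq e^{-\lambda_1(U,V)(t-t_0/2)}$, hence
\[
\bigl\|P^{U,V}_{t-t_0/2}\1_U\bigr\|_{L^2(U)} \leq e^{-\lambda_1(U,V)(t-t_0/2)}\,|U|^{1/2}.
\]

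Multiplying the two estimates and adding back $\pr_x[\tau_U\leq t]$ gives \eqref{eq:l1}. There is no genuine obstacle here; the only conceptual point to be careful about is that the assumption \eqref{eq:int} only provides kernel boundedness for times $\geq t_0$, which is exactly why the splitting has to sacrifice a time $t_0/2$ on the $L^2\to L^\infty$ side (producing $p^{U,V}(t_0,x,x)$ via Chapman--Kolmogorov), leaving the remaining $t-t_0/2$ for the spectral decay. This is also the reason the estimate is stated only for $t>t_0/2$.
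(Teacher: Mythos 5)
Your proof is correct and follows essentially the same route as the paper's: split according to $\{\tau_U>t\}$, apply Chapman--Kolmogorov at time $t_0/2$, Cauchy--Schwarz, the on-diagonal bound $p^{U,V}(t_0,x,x)\le p(t_0,0)$, and the $L^2$ spectral bound for the remaining time $t-t_0/2$. The paper differs only in a technicality: it tests against $\1_{B(x,R)}$ and lets $R\to\infty$, which avoids applying the $L^2$ semigroup to $\1_U$ when $|U|=\infty$ --- a case in which \eqref{eq:l1} is trivial anyway.
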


\begin{proof} The proof  goes along  standard arguments. Let $U, x,$ and $t$ be as in the assumptions. We have
\begin{eqnarray*}
P_t^V \1(x) &\leq & \ex_x\left[{\rm e}^{-\int_0^t V(X_s)\,{\rm d}s};\tau_U>t\right] + \pr_x[\tau_U\leq t]\\
&=& P_t^{U,V} \1_U(x)+ \pr_x[\tau_U\leq t]
\end{eqnarray*}
and for any $R>0$
\begin{eqnarray*}
P_t^{U,V}\1_U(x) &\leq & P_t^{U,V}\mathbf 1_{B(x,R)}(x)+ P_t^{U,V}\mathbf 1_{B(x,R)^c}(x)\\
&\leq & P_{t_0/2}^{U,V}P_{t-t_0/2}^{U,V}\1_{B(x,R)}(x)+\pr_x[X_t\notin B(x,R)].
\end{eqnarray*}
Further,
\begin{eqnarray*}
P_{t_0/2}^{U,V}P_{t-t_0/2}^{U,V}\1_{B(x,R)} & = & \left\langle p^{U,V}(t_0/2,x,\cdot), P_{t-t_0/2}^{U,V}\1_{B(x,R)}(\cdot)\right\rangle_{L^2(U)}\\
&\leq & \|p^{U,V}(t_0/2,x,\cdot)\|_{L^2(U)}\|P_{t-t_0/2}^{U,V}\1_{B(x,R)}\|_{L^2(U)}
\\
&\leq & (p^{U,V}(t_0,x,x))^{1/2}{\rm e}^{-\lambda_1(U,V)(t-t_0/2)}\|\1_{B(x,R)}\|_{L^2(U)}
\\ & \leq& c (p(t_0,0))^{1/2}{\rm e}^{-\lambda_1(U,V)(t-t_0/2)} \sqrt{|U\cap B(x,R)|}.
\end{eqnarray*}
Collecting these estimates we obtain
\begin{equation*}
P^V_t \1(x) \leq C_1\sqrt{|U\cap B(x,R)|} \, {\rm e}^{-\lambda_1(U,V)(t-t_0/2)}+ \pr_x[\tau_U \leq t] + \pr_0[|X_t| \geq R], \quad x \in U, \ \ t > t_0/2.
\end{equation*}
To get \eqref{eq:l1}, it is enough to take the limit $R \to \infty$.
\end{proof}

In the random setting, we will need the following lemma on the mean number of eigenvalues not exceeding a given level $\lambda>0$.

\begin{lemma} \label{eq:number_eig}
Let $X$ be a symmetric {strong Feller} L\'evy process with characteristic exponent $\psi$ satisfying \eqref{eq:Lchexp} {and \eqref{eq:int},} and let $V^{\omega}$ be a Poissonian potential defined in \eqref{eq:pot}. For $n \in \Z_+$ let $D_n = (-2^n, 2^n)^d.$ Then for every $\lambda >0$ we have
\begin{equation}\label{eq:ids-lambda-1}
2^d \, \ex_{\qpr} \left[\# \left\{k \in \Z_+: \lambda_k(D_n,V^{\omega}) \leq \lambda \right\}\right] \leq \ex_{\qpr} \left[\# \left\{k \in \Z_+: \lambda_k(D_{n+1},V^{\omega}) \leq \lambda \right\}\right], \quad n \in \Z_+.
\end{equation}
Consequently, for any box $D_n$ as above and any  $\lambda>0$  one has
\begin{equation}\label{eq:ids-lambda-2}
\frac{1}{|D_n|}\ex_{\qpr} \left[\# \left\{k \in \Z_+: \lambda_k(D_{{n}},V^{\omega}) \leq \lambda \right\}\right]\leq N^D(\lambda). \end{equation}
\end{lemma}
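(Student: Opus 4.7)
The proof naturally splits into establishing \eqref{eq:ids-lambda-1} and then deducing \eqref{eq:ids-lambda-2} from it. For \eqref{eq:ids-lambda-1}, my plan is a Dirichlet-bracketing-type argument. I would partition $D_{n+1}$, up to a Lebesgue-null set, as a disjoint union $\bigsqcup_{k=1}^{2^d} D_n^{(k)}$ of $2^d$ translates of $D_n$. For each subbox $D_n^{(k)}$, take the eigenfunctions of $H^{\omega}_{D_n^{(k)}}$ corresponding to eigenvalues not exceeding $\lambda$ and extend each by zero on the complement of $D_n^{(k)}$. These extensions lie in $\cF_{D_{n+1}} = \{f \in D(\cE): f = 0 \text{ a.e.\ on } D_{n+1}^c\}$ and have pairwise disjoint supports, so they are linearly independent and span a subspace of dimension $\sum_{k=1}^{2^d} \#\{j: \lambda_j(D_n^{(k)},V^{\omega}) \leq \lambda\}$. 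A min-max argument on the eigenvalue counting function should then give the pathwise bound $\#\{j: \lambda_j(D_{n+1}, V^{\omega}) \leq \lambda\} \geq \sum_{k=1}^{2^d} \#\{j: \lambda_j(D_n^{(k)}, V^{\omega}) \leq \lambda\}$. Taking $\ex_{\qpr}$ and using the translation invariance of the Poisson point process $\mathcal N$, so that each subbox contributes the same expectation as $D_n$, delivers \eqref{eq:ids-lambda-1}.

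For \eqref{eq:ids-lambda-2}, I would iterate \eqref{eq:ids-lambda-1} to see that $a_n := |D_n|^{-1}\ex_{\qpr}[\#\{k: \lambda_k(D_n, V^{\omega}) \leq \lambda\}]$ is nondecreasing in $n$, and then identify $\lim_n a_n$ with $N^D(\lambda)$ by combining the a.s.\ vague convergence $\ell^{\omega}_R \to \ell$ recalled in Section 2.3 (via the maximal ergodic theorem) with dominated convergence. At every continuity point $\lambda$ of $N^D$ one has $\frac{1}{|D_n|}\#\{k:\lambda_k(D_n,V^{\omega})\leq \lambda\} \to N^D(\lambda)$ almost surely, and a uniform $L^1(\qpr)$ domination (for instance by the box-normalised count for the free process, which obeys a Weyl-type upper bound) allows passing to expectations, giving $a_n \to N^D(\lambda)$. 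The monotonicity then forces $a_n \leq N^D(\lambda)$ for every $n$, and the right continuity of $N^D$ extends the inequality to every $\lambda > 0$.

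The main obstacle is justifying the pathwise variational step underlying \eqref{eq:ids-lambda-1}. In the local (Brownian) case $\cE(f_i,f_{i'}) = 0$ for disjointly supported $f_i, f_{i'}$ and the dimension count goes through immediately, but for nonlocal $\cE$ one picks up cross terms $\cE(f_i,f_{i'}) = -\int\int f_i(x) f_{i'}(y) J(x,y)\,dx\,dy$ across neighbouring subboxes whose sign is not controlled once signed eigenfunctions enter the span. A workable backup is to pass through the Dirichlet heat-kernel trace, which is domain-monotone pathwise by the Dynkin-Hunt formula \eqref{eq:HuntF}: the inequality $p^{D_n^{(k)},V^{\omega}}(t,x,x) \leq p^{D_{n+1},V^{\omega}}(t,x,x)$ for $x \in D_n^{(k)}$ yields $\sum_k \tr(P_t^{D_n^{(k)},V^{\omega}}) \leq \tr(P_t^{D_{n+1},V^{\omega}})$ for every $t>0$; taking $\ex_{\qpr}$ and invoking stationarity produces the Laplace-transform analogue of \eqref{eq:ids-lambda-1}, from which an approximation argument recovers \eqref{eq:ids-lambda-1} itself.
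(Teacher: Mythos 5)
Your primary route for \eqref{eq:ids-lambda-1} is exactly the paper's proof: split $D_{n+1}$ (up to a null set) into $2^d$ disjoint translates of $D_n$, get the pathwise counting inequality from the min--max characterization, and then use space homogeneity and stationarity of $V^{\omega}$ to replace each subbox by $D_n$ after taking $\ex_{\qpr}$; the paper simply invokes the standard min--max formalism (Schm\"udgen, Section 12.1) for the variational step and makes no heat-kernel detour. Your derivation of \eqref{eq:ids-lambda-2} from \eqref{eq:ids-lambda-1} (monotonicity of the volume-normalized expected counts, identification of the limit with $N^D(\lambda)$ at continuity points via the a.s.\ vague convergence of $\ell^{\omega}_R$, an $L^1(\qpr)$ domination by the free counting function, and right continuity) is sound and in fact more explicit than the paper, which disposes of this step with a single ``consequently''. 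So the issue is not the second half but whether you actually prove \eqref{eq:ids-lambda-1}.

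There you have a genuine gap. You correctly observe that the variational step is delicate for nonlocal forms, since for $f=\sum_i c_i f_i$ with disjointly supported signed eigenfunctions the cross terms $\cE(f_i,f_{i'})=-\iint f_i(x)f_{i'}(y)\,\nu(x-y)\,{\rm d}x\,{\rm d}y$ are not sign-definite, and you leave that route open; but the backup you offer cannot close it. Trace (Laplace-transform) domination for every $t>0$ does not imply pointwise domination of eigenvalue counting functions, so no approximation argument recovers \eqref{eq:ids-lambda-1} from $\sum_k \tr\big(P_t^{D_n^{(k)},V^\omega}\big) \le \tr\big(P_t^{D_{n+1},V^\omega}\big)$. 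For instance, for the spectra $\{1,1\}$ and $\{1/2,3/2,3/2\}$ one has $2e^{-t}\le e^{-t/2}+2e^{-3t/2}$ for all $t>0$ (equivalently $u+2/u\ge 2$ with $u=e^{t/2}$), yet at the level $\lambda=1$ the first spectrum has two eigenvalues below $\lambda$ and the second only one; the same phenomenon occurs for a two-state jump form, where Dirichlet decoupling raises $\lambda_1$ but lowers $\lambda_2$. Thus your proposal establishes only the Laplace-transform analogue of \eqref{eq:ids-lambda-1}, not \eqref{eq:ids-lambda-1} itself. To align with the paper you must carry out the form-domination/min--max counting step directly, i.e.\ show that the direct sum of the subbox operators dominates $H^{\omega}_{D_{n+1}}$ in the sense needed for the counting functions --- which is precisely where the nonlocal cross terms must be confronted rather than bypassed through kernel or trace monotonicity, since the latter loses exactly the information needed at a fixed level $\lambda$.
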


\begin{proof}
Let $n \in \Z_{+}$. Denote by $\left\{D^i_n\right\}_{i=1}^{2^d}$ the collection of $2^d$ disjoint open boxes of the form $\mathbf x+{(0,2^{n+1})^d}$, $\mathbf x \in {2^{n+1}} \Z^d$ such that $\bigcup_{i=1}^{2^d} D^i_n \subset D_{n+1}$ and $|\bigcup_{i=1}^{2^d} D^i_n| = |D_{n+1}| = 2^{(n+2)d}$.

By using standard min-max formulas for  eigenvalues (see, e.g., \cite[Section 12.1]{bib:Schm}), one can check that
\begin{align} \label{eq:ineq_number}
 \sum_{i=1}^{2^d} \#\{k \in \Z_{+}:\lambda_k(D^i_n, V^{\omega}) \leq \lambda\} \leq \#\{k \in \Z_{+}:\lambda_k(D_{n+1}, V^{\omega}) \leq \lambda\}.
\end{align}
Moreover, the space homogeneity of the process together with the stationarity of the potential $V^{\omega}$ give
$$
\ex_{\qpr} \left[\#\{k \in \Z_{+}:\lambda_k( D^i_n,V^{\omega}) \leq \lambda \} \right] = \ex_{\qpr} \left[\#\{k \in \Z_{+}:\lambda_k(D_n,V^{\omega}) \leq \lambda \} \right], \quad i = 1,...,2^d.
$$
Taking the expected value $\ex_{\qpr}$ on both sides of \eqref{eq:ineq_number}, we immediately get \eqref{eq:ids-lambda-1}.
\end{proof}

\subsection{A general upper bound}
\noindent
We first introduce an auxiliary function through which we determine the typical asymptotic profile for
the quenched asymptotics of the function $u^{\omega}(t,x)$.

For every $\alpha \in (0,2]$, $\kappa >0,$ and a nonincreasing function $F:[1,\infty) \to [0,\infty)$ with $\lim_{r \to \infty} F(r)=0$ we define the function $f_{F,\alpha,\kappa}:[1,\infty) \to [0,\infty)$ by
\begin{align} \label{eq:basicf}
f_{F,\alpha, \kappa}(r) = \left(\big(r \wedge |\log (1 \wedge F(r))| \big) + \frac{d}{2} \log r \right) \left(\frac{d \log r}{\kappa}\right)^{\frac{\alpha}{d}}.
\end{align}
One can directly see that for any fixed $\alpha \in (0,2]$, $\kappa >0,$ and a given function $F(r)$ we have
$$
f_{F,\alpha, \kappa}(1) =  0, \quad \lim_{r \to \infty} f_{F,\alpha,\kappa}(r) = \infty, \quad  \text{moreover $f_{F,\alpha, \kappa}(r)$ is strictly increasing in $r$}.
$$
In particular, the inverse function
\begin{equation}\label{eq:ha_te}
h_{F,\alpha, \kappa}:=[f_{F,\alpha, \kappa}]^{-1} : [0,\infty) \to [1,\infty)
\end{equation}
is well defined. It is strictly increasing and satisfies
$$
\lim_{t \to \infty} h_{F,\alpha, \kappa}(t) = \infty.
$$
When the parameters $\alpha$ and $\kappa$ will be fixed, they will be
dropped. Observe that the function $h_{F,\alpha,\kappa}(t)$ satisfies:
\begin{equation}\label{eq:eq-ha}
t\left(\frac{\kappa}{d\log h_{F,\alpha,\kappa}(t)}\right)^{\frac{\alpha}{d}}=\Big( h_{F,\alpha,\kappa}(t) \wedge \left|\log \left( 1 \wedge F(h_{F,\alpha,\kappa}(t))\right) \right|\Big)+\frac{d}{2}\,\log h_{F,\alpha,\kappa}(t).
\end{equation}
The function $h_{F,\alpha,\kappa}(t)$ will play a central role in determining the rate of decay of the functionals considered.

\smallskip

In what follows we
will work under the following  regularity assumption \textbf{(U)} on the process $X$.
In the condition below, the constant $t_0$ comes from the assumption \eqref{eq:int}.

\bigskip

\begin{itemize}
\item[\textbf{(U)}] There are constants {$C_2 >0$, $C_3 \geq 2$}, {$\gamma > 0$, $r_0 \geq 1$, $t_1 \geq t_0 \vee 1$
and a nonincreasing function $F:[1, \infty) \to [0,\infty)$ such that $\lim_{r \to \infty} F(r) = 0,$} for which
\begin{align*}
\pr_0(|X_t| \geq r) \leq {C_2} \, t^{\gamma} \, \left(F(r) \vee e^{-r} \right),
\quad r \geq r_0 \vee {C_3 t}, \quad t \geq t_1.
\end{align*}
\end{itemize}

\bigskip

The next theorem is our main result in this section.

\begin{theorem} \label{thm:upper}
Let $X$ be a symmetric {strong Feller} L\'evy process
with characteristic exponent $\psi$ satisfying \eqref{eq:Lchexp}
 {and \eqref{eq:int}}
  such that the assumption \textbf{(U)} holds.
  Let $V^{\omega}$ be a Poissonian potential defined in \eqref{eq:pot}.
If there exist $\alpha \in (0,2]$ and $\kappa_0 >0$ such that
\begin{align} \label{eq:IDS_upper}
\limsup_{\lambda \to 0^{+}} \, \lambda^{d/\alpha} \, \log N^D(\lambda) \leq - \kappa_0,
\end{align}
then for every fixed $x \in \R^d$ one has
\begin{equation}\label{eq:statement-upper}
\limsup_{t \to \infty} \frac{\log u^{\omega}(t,x) - \frac{d}{2} \log h_{F,\alpha,\kappa_0}(t)}{g(t)} \leq -\left(\frac{\kappa_0}{d}\right)^{\alpha/d}, \quad \qpr-\text{a.s.},
\end{equation}
where the function $h_{F,\alpha,\kappa}$ is defined in \eqref{eq:ha_te} with $F$ given by \textbf{(U)} and $g(t):= t/(\log h_{F,\alpha,\kappa}(t))^{\alpha/d}$.
\end{theorem}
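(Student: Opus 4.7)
The plan is to follow a Fukushima-style strategy: derive the quenched upper bound directly from the IDS upper bound \eqref{eq:IDS_upper}, using Lemma \ref{lem:lem1} to decompose $u^\omega(t,x)$ into a spectral term and a ``mass escapes the box'' term, and Lemma \ref{eq:number_eig} to convert the averaged IDS bound into a pointwise lower bound on $\lambda_1(D_n,V^\omega)$ along the dyadic scales $D_n=(-2^n,2^n)^d$ via Markov's inequality and Borel--Cantelli.

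First I would apply Lemma \ref{lem:lem1} with $V=V^\omega$, $U=D_n$, writing
\[
u^\omega(t,x)\;\leq\;C_1|D_n|^{1/2}e^{-\lambda_1(D_n,V^\omega)(t-t_0/2)}+\pr_x[\tau_{D_n}\leq t],
\]
and then upgrade \textbf{(U)} from a one-time tail of $|X_t|$ to a tail of $\tau_{D_n}$ by a standard strong-Markov/symmetry argument (split $\pr_x[\tau_{D_n}\leq t]\leq \pr_0[|X_t|\geq c 2^n]+\pr_x[\tau_{D_n}\leq t,\,|X_t|\leq c 2^n]$ and handle the second term by the strong Markov property at $\tau_{D_n}$), which produces $\pr_x[\tau_{D_n}\leq t]\leq Ct^{\gamma'}(F(c2^n)\vee e^{-c2^n})$ uniformly for $x$ in a fixed compact, for all $n$ large. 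For the spectral term, Lemma \ref{eq:number_eig} together with Markov's inequality yields
\[
\qpr[\lambda_1(D_n,V^\omega)\leq\lambda]\;\leq\;\ex_\qpr\!\left[\#\{k:\lambda_k(D_n,V^\omega)\leq\lambda\}\right]\;\leq\;|D_n|\,N^D(\lambda),
\]
so, after fixing $\varepsilon>0$ and invoking \eqref{eq:IDS_upper}, the choice $\lambda_n:=((\kappa_0-\varepsilon)/((d+\varepsilon)n\log 2))^{\alpha/d}$ renders the right-hand side summable in $n$. Borel--Cantelli then gives, $\qpr$-a.s., $\lambda_1(D_n,V^\omega)\geq\lambda_n\geq(\kappa_0/(d\log 2^n))^{\alpha/d}(1-o(1))$ for all $n$ large.

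Given $t$, I would pick $n=n(t)$ with $2^n\leq h(t)<2^{n+1}$ where $h=h_{F,\alpha,\kappa_0}$, insert both bounds into the inequality above, take logs, subtract $\tfrac{d}{2}\log h(t)$, and divide by $g(t)=t/(\log h(t))^{\alpha/d}$. Using the defining identity \eqref{eq:eq-ha}, the spectral part gives at most $\tfrac{d}{2}\log h(t)-(\kappa_0/d)^{\alpha/d}(1-o(1))g(t)$, while the exit-time part gives at most $\gamma'\log t-(h(t)\wedge|\log F(h(t))|)$, which by \eqref{eq:eq-ha} equals $\gamma'\log t+\tfrac{d}{2}\log h(t)-(\kappa_0/d)^{\alpha/d}g(t)$; both contributions therefore converge to $-(\kappa_0/d)^{\alpha/d}$ in the $\limsup$ after normalisation. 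I expect the main obstacle to be controlling the lower-order corrections (the $\log t$ produced by \textbf{(U)}, the $\varepsilon$-window in the IDS bound, and the gap between $2^n$ and $h(t)$) as $o(g(t))$ uniformly across all three asymptotic regimes visible in Table \ref{tab:tab}; this is most delicate when $|\log F(h(t))|\leq h(t)$ and $g(t)$ itself grows only slowly, forcing one to be sparing with $\varepsilon$ in Borel--Cantelli and to track carefully the polynomial-in-$t$ factor $\gamma'$ emerging from the supremum bound on $|X_s|$.
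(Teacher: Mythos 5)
Your skeleton is the same as the paper's (the decomposition of Lemma \ref{lem:lem1}, then Lemma \ref{eq:number_eig} plus Markov's inequality and Borel--Cantelli along dyadic boxes for the a.s.\ lower bound on $\lambda_1$ --- this is exactly the Fukushima argument the paper invokes --- and finally the balancing via \eqref{eq:eq-ha}), but your treatment of the exit term has two genuine gaps. First, assumption \textbf{(U)} bounds $\pr_0(|X_t|\geq r)$ only in the range $r\geq r_0\vee C_3t$. With your choice $2^{n}\leq h(t)<2^{n+1}$ the spatial scale is of order $h(t)$, and precisely in the regimes that motivate the theorem (stretched-exponential or bounded jumps, where $h(t)$ is of order $t/(\log t)^{2/d}$ or a power of it, hence $h(t)=o(t)$) this scale falls below $C_3t$, so \textbf{(U)} gives no information on $\pr_0(|X_t|\geq c2^{n})$ and your bound $\pr_x[\tau_{D_n}\leq t]\leq Ct^{\gamma'}(F(c2^{n})\vee e^{-c2^{n}})$ is unjustified. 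The paper's proof avoids this by working in the box $U_{2R}$ with $R=h(t)\vee C_3t$, and then uses the monotonicity of $f_{F,\alpha,\kappa_0}$, i.e.\ inequality \eqref{eq:est_h}, to check that the balance still goes the right way when $C_3t>h(t)$.

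Second, the constant $c<1$ that your strong-Markov split and the dyadic rounding put inside $F$ is not harmless, because $h$ is defined through $F$ itself and \eqref{eq:eq-ha} balances the exit and spectral exponents to leading order. Whenever $|\log F(r)|$ grows like a power of $r$ (e.g.\ $F(r)=e^{-\theta r^{\beta}}$, exactly the cases later used with matching constants), one only gets $c2^{n}\wedge|\log F(c2^{n})|\geq (c/2)^{\beta\wedge 1}\,\big(h(t)\wedge|\log F(h(t))|\big)(1+o(1))$, so the minimum of the two exponents becomes the degraded exit exponent and the method yields $\limsup\leq -(c/2)^{\beta\wedge1}(\kappa_0/d)^{\alpha/d}$, strictly weaker than claimed; note that your third paragraph silently replaces $F(c2^{n})$ by $F(h(t))$, which is inconsistent with your second paragraph. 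The paper's fix is to inscribe the ball $B(x,R)$ with $R$ exactly $h(t)\vee C_3t$ into $U_{2R}$ and apply L\'evy's maximal inequality for symmetric processes, $\pr_0[\sup_{s\leq t}|X_s|\geq R]\leq 2\pr_0[|X_t|\geq R]$, so that the argument of $F$ is exactly $R\geq h(t)$ (the factor $2$ in the box side enters only through logarithms and volumes, which are harmless). Finally, the point you flag as the main obstacle, $\log t=o(g(t))$, is a one-line consequence of \eqref{eq:eq-ha}: when $h(t)\geq C_3t$ that identity forces $h(t)\leq e^{ct^{d/(d+\alpha)}}$, and otherwise $\log h(t)\leq \log(C_3t)$; your Borel--Cantelli step for the eigenvalues is correct as written.
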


\begin{proof}
Fix $x \in \R^d$
and let $r_0$, $t_1$, $\gamma$, $\alpha$, $\kappa_0$ and $F$ be as in the assumptions.
Specifically, we may and do assume that $r_0 \geq 1$ is so large that $F(r) \leq 1$ for $r \geq r_0$.
We will write $h$ for $h_{F,\alpha,\kappa_0}$.
By Lemma \ref{lem:lem1}, for every $t \geq t_0/2$ and every open set
 $U\ni x$, we have
\begin{align}\label{eq:bd0}
u^{\omega}(t,x) = P_t^{V^{\omega}} \1 (x)
\leq C_1 \sqrt{|U|} {\rm e}^{-\lambda_1(U,V^{\omega})(t-t_0/2)}
+ \pr_x[\tau_U \leq t].
\end{align}
In particular, we can choose $U=U_{2R}= (-2R, 2R)^d,$ where $R$ is so
large that
\begin{align} \label{eq:restr}
R > |x| \vee r_0 \vee {C_3} t \quad \text{and} \quad t \geq t_1.
\end{align}
Now, since for this choice of $U$ we have $B(x,R)\subset U,$ from the L\'{e}vy inequality and assumption {\bf (U)}\linebreak we obtain:
\begin{eqnarray} \label{eq:bd1}
\pr_x[\tau_U \leq t] &\leq& \pr_x[\tau_{B(x,R)} \leq t] = \pr_0[\tau_{B(0,R)} \leq t]
=\pr_0[\sup_{s \in [0,t]} |X_s| \geq R]  \nonumber\\ &\leq& 2 \pr_0[|X_t| \geq R]\leq 2C_2 t^\gamma {\rm e}^{-(R\wedge |\log (F(R))|)}.
\end{eqnarray}
We now estimate $\lambda_1({U_{2R}},V^{\omega})$ for large $R$.
Inequality \eqref{eq:ids-lambda-2} from Lemma \ref{eq:number_eig}
holds for dyadic boxes $D_n$ and reads:
\[
\frac{1}{2^{(n+1)d}} \ex_{\qpr} \left[\# \left\{k \geq 1: \lambda_k(D_n, V^{\omega}) \leq \lambda \right\}\right] \leq N^D(\lambda), \quad \lambda > 0, \ \ n \in \Z_+.
\]
Running the argument from  \cite[(2.3)-(2.6)]{bib:Fuk}
 with $\phi(r) = \kappa_0 r^{d/\alpha}$ and the sequence
  $t_n=2^n$, from the assumption \eqref{eq:IDS_upper},
   we get that  for every $\varepsilon \in (0,1),$  $\qpr-$almost surely we can find $R_{\varepsilon}=R_{\varepsilon}(\omega) > 1$ such that for every $R \geq R_{\varepsilon}$
\begin{align} \label{eq:rev}
\lambda_1({U_{2R}},V^{\omega}) \geq (1-\varepsilon)
\left(\frac{\kappa_0}{d \, \log R} \right)^{\alpha/d}.
\end{align}
Piecing together
\eqref{eq:bd0}, \eqref{eq:bd1}, and \eqref{eq:rev}
we get that
for every $\varepsilon \in (0,1),$ $\mathbb Q-$a.s.
there exists $R_{\varepsilon} > 1$
 such that for all $t$ and $R$ satisfying $R \geq R_{\varepsilon}$
 and \eqref{eq:restr} one has
\begin{equation*}
u^\omega(t,x)\leq C_1 (4R)^{d/2} {\rm e}^{-(1-\varepsilon)(t-t_0/2)
\left(\frac{\kappa_0}{d\log R}\right)^{\alpha/d}} + 2C_2t^\gamma{\rm e}^{-(R\wedge
|\log(F(R))|)},
\end{equation*}
and further,
\begin{align} \label{eq:b3}
\log u^{\omega} (t,x) & \leq -\min \left\{(1-\varepsilon)
\left(t-\frac{t_0}{2}\right) \left(\frac{\kappa_0}{d \, \log R}
\right)^{\alpha/d} - \frac{d}{2} \log R \, , \, R \, \wedge
 \, \left|\log \Big(F(R)\Big)\right| \right\} \\ &
  \ \ \ + \gamma \log t + c_1,  \nonumber
\end{align}
with an absolute constant $c_1 \geq 0$.

Let now $h(t)$ be given by (\ref{eq:ha_te}).
As $h(t)\to \infty$ when $t\to\infty$, $\mathbb Q-$a.s. there exists $t_2\geq t_1$ large enough so that for every $t \geq t_2$ the condition
\eqref{eq:restr} holds with $R=h(t) \vee {C_3 t},$ and moreover $R\geq R_\varepsilon(\omega).$  Thus we may substitute in \eqref{eq:b3} the value
\begin{equation}\label{eq:er-t}R=R(t):= h(t) \vee {C_3 t}.
\end{equation}
Next, from the definition of $h(t),$   (\ref{eq:eq-ha}), and  the monotonicity of $f_{F,\alpha, \kappa_0}$, we see that
\begin{align} \label{eq:est_h}
\big(h(t) \vee {C_3 t} \big) \, \wedge \, \left|\log \Big(1 \wedge F(h(t) \vee {C_3 t}) \Big)\right|
 \geq t \left(\frac{\kappa_0}{d \, \log \big(h(t) \vee {C_3 t}\big)} \right)^{\frac{\alpha}{d}} - \frac{d}{2} \log \big(h(t) \vee {C_3 t}\big)
\end{align}
with  equality when $h(t) \geq {C_3 t}$. We finally obtain that for all $t > t_2$ we have
\begin{align*}
\log u^{\omega}(t,x) & \leq  - \left((1-\varepsilon)(t-t_0/2) \left(\frac{\kappa_0}{d \, \log \big(h(t) \vee {C_3 t}\big)} \right)^{\alpha/d} - \frac{d}{2} \log \big(h(t) \vee {C_3 t}\big)\right) + \gamma \log t + {c_1} \\
&\leq - \left((1-\varepsilon)(t-t_0/2) \left(\frac{\kappa_0}{d \, \log \big(h(t) \vee {C_3 t}\big)} \right)^{\alpha/d} - \frac{d}{2} \log h(t) \right) + \left(\gamma + \frac{d}{2}\right) \log t + {c_2},
\end{align*}
with absolute constants  $c_1, c_2 \geq 0$, for $\qpr$--almost all $\omega$.

 To complete the proof,
it remains to show that {
\begin{align} \label{eq:b4}
\log t = o(g(t)) \qquad \text{as \ $t \to \infty,$}
\end{align}
where $g(t):= t/(\log h(t))^{\alpha/d}$.
This is obvious when $h(t) \leq {C_3 t}$, and when $h(t) \geq {C_3 t}$, then from \eqref{eq:eq-ha} we have
\[t \left(\frac{\kappa_0}{d \, \log h(t)} \right)^{\alpha/d} - \frac{d}{2} \log h(t)>0, \quad  \mbox{so that} \quad h(t)\leq e^{c_3 t^{\frac{d}{d+\alpha}}},\]
for some $c_3 > 0$ and \eqref{eq:b4} follows.}

We conclude that $\mathbb Q-$almost surely
$$
\limsup_{t \to \infty} \frac{\log u^{\omega}(t,x) - \frac{d}{2} \log h(t)}{g(t)} \leq -(1-\varepsilon) \left(\frac{\kappa_0}{d}\right)^{\alpha/d},  \ \ \text{with} \ \ g(t) =  \frac{t}{\left( \log h(t) \right)^{\alpha/d}}.
$$
Letting $\varepsilon \to 0$ through rational numbers, we get (\ref{eq:statement-upper}). The proof is complete.
\end{proof}

The next corollary will eventually enable us to
obtain, for certain processes, the existence of $\lim_{t\to\infty}\frac{\log u^\omega(t,x)}{g(t)}.$

\begin{corollary} \label{cor:upper}
Let the assumptions of  Theorem \ref{thm:upper} above be satisfied. More specific, let {\bf (U)} hold with a function $F$.
If there exists $Q_1 \in (0,\infty]$ such that
$$
\liminf_{r \to \infty}\frac {|\log F(r)|}{\log r} \geq {Q_1}
$$
then
\begin{equation}\label{eq:to_zero}
\limsup_{t \to \infty} \frac{\log h_{F,\alpha,\kappa_0}(t)}{g(t)}
 \leq \frac{2}{2 {Q_1} + d} \left(\frac{\kappa_0}{d}\right)^{\alpha/d}
\end{equation}
and, consequently, for every fixed $x \in \R^d$, one has
\begin{equation}\label{eq:statement-upper_A}
\limsup_{t \to \infty} \frac{\log u^{\omega}(t,x)}{g(t)}
 \leq -\left(\frac{\kappa_0}{d}\right)^{\alpha/d}
 \left(1-\frac{d}{d+2{Q_1}}\right) , \quad \qpr-\text{a.s.}
\end{equation}
In particular, when $\lim_{r\to\infty}\frac{|\log F(r)|}{\log r}=\infty,$ then
\begin{equation}\label{eq:statement-upper_infty}
\limsup_{t \to \infty} \frac{\log u^{\omega}(t,x)}{g(t)} \leq -\left(\frac{\kappa_0}{d}\right)^{\alpha/d} , \quad \qpr-\text{a.s.}
\end{equation}

\end{corollary}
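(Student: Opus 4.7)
The plan is to deduce \eqref{eq:statement-upper_A} by first proving the deterministic bound \eqref{eq:to_zero} and then combining it with \eqref{eq:statement-upper} from Theorem \ref{thm:upper}. The key is that the hypothesis forces $|\log F(r)|$ to grow at least like a power of $\log r$, so $f_{F,\alpha,\kappa_0}$ grows at least like a power of $\log r$, and consequently its inverse $h_{F,\alpha,\kappa_0}(t)$ cannot be too large.

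First I would prove the auxiliary bound \eqref{eq:to_zero}. Fix any $Q \in (0, Q_1)$ (or any positive $Q$ in the case $Q_1=\infty$). Since $F(r) \to 0$, one has $1 \wedge F(r) = F(r)$ eventually, and by hypothesis $|\log F(r)| \geq Q \log r$ for $r$ large. Since $Q \log r = o(r)$, there is $r_2 = r_2(Q)$ such that for $r \geq r_2$,
$$
r \wedge |\log(1 \wedge F(r))| \;\geq\; Q \log r.
$$
Plugging this into the defining formula \eqref{eq:basicf} of $f_{F,\alpha,\kappa_0}$ gives
$$
f_{F,\alpha,\kappa_0}(r) \;\geq\; \left(Q+\tfrac{d}{2}\right)\left(\tfrac{d}{\kappa_0}\right)^{\alpha/d} (\log r)^{1+\alpha/d}, \qquad r \geq r_2.
$$
Since $h_{F,\alpha,\kappa_0}(t)\to \infty$, substituting $r = h_{F,\alpha,\kappa_0}(t)$ for $t$ large and using $f_{F,\alpha,\kappa_0}(h_{F,\alpha,\kappa_0}(t)) = t$ yields
$$
\bigl(Q+\tfrac{d}{2}\bigr)\bigl(\tfrac{d}{\kappa_0}\bigr)^{\alpha/d}\bigl(\log h_{F,\alpha,\kappa_0}(t)\bigr)^{1+\alpha/d} \;\leq\; t.
$$
Dividing by $\left(\log h_{F,\alpha,\kappa_0}(t)\right)^{\alpha/d}$ and recalling $g(t) = t/(\log h_{F,\alpha,\kappa_0}(t))^{\alpha/d}$:
$$
\frac{\log h_{F,\alpha,\kappa_0}(t)}{g(t)} \;\leq\; \frac{2}{2Q+d}\left(\frac{\kappa_0}{d}\right)^{\alpha/d}.
$$
Taking $\limsup$ and then letting $Q \nearrow Q_1$ gives \eqref{eq:to_zero}; when $Q_1 = \infty$, the same estimate for every finite $Q$ forces $\limsup \log h_{F,\alpha,\kappa_0}(t)/g(t) = 0$.

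Next, to obtain \eqref{eq:statement-upper_A}, I rearrange \eqref{eq:statement-upper}: for every $\varepsilon > 0$, almost surely there exists $t_\varepsilon$ such that for $t \geq t_\varepsilon$,
$$
\frac{\log u^\omega(t,x)}{g(t)} \;\leq\; \frac{d}{2}\cdot\frac{\log h_{F,\alpha,\kappa_0}(t)}{g(t)} \;-\; \left(\frac{\kappa_0}{d}\right)^{\alpha/d} + \varepsilon.
$$
Taking $\limsup$ on both sides, applying \eqref{eq:to_zero}, and letting $\varepsilon \to 0$ gives
$$
\limsup_{t\to\infty}\frac{\log u^\omega(t,x)}{g(t)} \;\leq\; \frac{d}{2Q_1+d}\left(\frac{\kappa_0}{d}\right)^{\alpha/d} - \left(\frac{\kappa_0}{d}\right)^{\alpha/d} \;=\; -\left(\frac{\kappa_0}{d}\right)^{\alpha/d}\left(1-\frac{d}{d+2Q_1}\right),
$$
which is \eqref{eq:statement-upper_A}; the case $Q_1 = \infty$ reduces to \eqref{eq:statement-upper_infty}.

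I do not expect any serious obstacle: the corollary is essentially a bookkeeping exercise that converts the polynomial-type lower bound on $|\log F|$ into an upper bound on $h_{F,\alpha,\kappa_0}$ via the inverse relation $f \circ h = \mathrm{id}$. The only subtlety is the treatment of the minimum $r\wedge |\log(1\wedge F(r))|$ appearing in \eqref{eq:basicf}, but this is trivially resolved by noting that $Q\log r = o(r)$ for any fixed $Q>0$, regardless of whether $Q_1$ is small, large, or infinite.
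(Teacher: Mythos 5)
Your proposal is correct and follows essentially the same route as the paper's own proof: bound $f_{F,\alpha,\kappa_0}(r)$ from below by $(Q+\tfrac d2)(\tfrac d{\kappa_0})^{\alpha/d}(\log r)^{1+\alpha/d}$ for any $Q<Q_1$, invert via $f\circ h=\mathrm{id}$ to control $\log h_{F,\alpha,\kappa_0}(t)/g(t)$, let $Q\nearrow Q_1$, and then combine with \eqref{eq:statement-upper}. The only difference is cosmetic (you divide the inequality directly instead of passing through an explicit exponential bound on $h$, and you spell out the final limsup step with an $\varepsilon$), so nothing further is needed.
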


\begin{proof}
The assumptions give that for any $0 < {\widetilde Q_1} < {Q_1}$ there exists $r_0$ such that
for $r>r_0$
\[f_{F,\alpha,\kappa_0}(r)\geq \left( r\wedge
 ({\widetilde Q_1} \log r)+\frac{d}{2}\,\log r\right)\left(\frac{d}{\kappa_0}\,\log r\right)^{\alpha/d},\]
which is equivalent to saying that for sufficiently large $r$
\[f_{F,\alpha,\kappa_0}(r)\geq \left({\widetilde Q_1} + \frac{d}{2}\right)\left(\frac{d}{\kappa_0}\right)^{\alpha/d}(\log r)^{1+\alpha/d}\]
or, for sufficiently large $t,$
\[h_{F,\alpha,\kappa}(t)\leq {\rm e}^{\left(\frac{2}{2 {\widetilde Q_1} + d} \left(\frac{\kappa_0}{d}\right)^{\alpha/d} t \right)^{\frac{d}{d+\alpha}}},\]
and further
\[0\leq \frac{\log h_{F,\alpha,\kappa_0}(t)}{g(t)}\leq
 \frac{(\log h_{F,\alpha,\kappa_0}(t))^{\frac{\alpha+d}{d}}}{t}
 \leq \frac{2}{2 {\widetilde Q_1} + d} \left(\frac{\kappa_0}{d}\right)^{\alpha/d}.\]
This means that
\[0\leq\limsup_{t\to\infty}\frac{\log h_{F,\alpha,\kappa_0}(t)}{g(t)}
\leq \frac{2}{2 {\widetilde Q_1} + d} \left(\frac{\kappa_0}{d}\right)^{\alpha/d}.\]
Letting {$\widetilde Q_1 \nearrow Q_1$} we get \eqref{eq:to_zero}. Statements \eqref{eq:statement-upper_A} and \eqref{eq:statement-upper_infty} follow directly from \eqref{eq:statement-upper}.
\end{proof}

\section{The lower bound for regularly distributed L\'evy measures}\label{sec:lower}

As indicated in the Introduction, the argument
deriving the quenched asymptotic lower bound directly from the lower asymptotics of the IDS seems to be not obvious in the non-diffusion case.
Instead, we estimate $u^\omega(t,x)$ directly.
In this part (similarly as in \cite{bib:Szn-ptrf93, bib:Fuk}), we require the potential profile $W$ to be bounded and compactly supported.
As usual in problems of this kind, we first prove that $\mathbb Q-$almost surely there
exist sufficiently large regions without potential interaction,
then we force the process to go to this region and then stay there for a long enough time. This behaviour will be described analytically.

\subsection{Typical potential configuration}

Let $\varepsilon>0$ be given.
For a given number $r>0,$ let $M^\varepsilon(r)$ be defined by
\begin{equation}\label{eq:m-M}
M^\varepsilon(r):= \left(\frac{d}{\omega_d \rho (1+\varepsilon)}\right)^{\frac{2}{d}+2} r^{-2d-2} {{\rm e}^{\frac{\omega_d\rho(1+\varepsilon)}{d}r^d}},
\end{equation}
where $\omega_d$ denotes the volume of the unit ball in $\mathbb R^d$.
We have a lemma.

\begin{lemma}\label{lem:empty_box} Let $\varepsilon>0,$ $a>0,$  and  let $r$ and $M^\varepsilon(r)$ be related by (\ref{eq:m-M}). Then
$\mathbb Q-$almost surely, there exists $r_0>0$ such that for
$r>r_0$ the set $(-M^\varepsilon(r),M^\varepsilon(r))^d\setminus (-r,r)^d$ contains a ball  with radius $r$ whose $a-$neighbourhood is free of Poisson points.
\end{lemma}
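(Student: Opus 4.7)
The plan is a Borel--Cantelli argument along integer scales combined with a monotone interpolation to arbitrary real $r$. Fix $\varepsilon > 0$ and observe from \eqref{eq:m-M} that $M^\varepsilon(r)^d \asymp r^{-2d(d+1)} \exp(\omega_d \rho (1+\varepsilon) r^d)$, so $M^\varepsilon$ is eventually strictly increasing. For each large integer $n$, I will bound $\mathbb Q$-probabilities so that, summably in $n$, the annulus $(-M^\varepsilon(n),M^\varepsilon(n))^d \setminus (-n-1,n+1)^d$ fails to contain an open ball of radius $n+1$ whose $a$-neighborhood is free of Poisson points.

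For each such $n$, I pick centers $x_k^{(n)}$ on the lattice $2(n+1+a)\Z^d$ intersected with the shell $\{x : 2(n+1) \leq \|x\|_\infty \leq M^\varepsilon(n) - (n+1+a)\}$. The resulting balls $B(x_k^{(n)}, n+1+a)$ are pairwise disjoint, each contained in $(-M^\varepsilon(n),M^\varepsilon(n))^d$, and each concentric $B(x_k^{(n)}, n+1)$ is disjoint from $(-n-1,n+1)^d$ (since $\|x_k^{(n)}\|_\infty \geq 2(n+1)$ forces some coordinate separation exceeding $n+1$). By disjointness and the Poisson property, the events that these balls contain no Poisson point are independent, each of probability $p_n = \exp(-\omega_d \rho (n+1+a)^d)$, and the number of centers satisfies $N_n \asymp M^\varepsilon(n)^d / n^d$. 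Using $(n+1+a)^d = n^d + O(n^{d-1})$ together with the explicit form of $M^\varepsilon(n)$,
$$N_n \, p_n \;\geq\; c_1 \, n^{-(2d^2 + 3d)} \, \exp\!\left(\tfrac{1}{2} \omega_d \rho \, \varepsilon \, n^d\right), \qquad n \geq n_1,$$
so $\mathbb Q(\text{every } B(x_k^{(n)}, n+1+a) \text{ contains a Poisson point}) \leq (1-p_n)^{N_n} \leq \exp(-N_n p_n)$ is summable in $n$. Borel--Cantelli yields $n_0(\omega) < \infty$ such that, $\mathbb Q$-a.s., for every $n \geq n_0$ at least one ball of the family is Poisson-empty.

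Given $r > n_0$, pick $n$ with $r \in [n,n+1]$ and the corresponding Poisson-empty center $y := x_{k(n)}^{(n)}$. Then $B(y,r) \subset B(y,n+1) \subset (-M^\varepsilon(n),M^\varepsilon(n))^d \subset (-M^\varepsilon(r),M^\varepsilon(r))^d$ by the monotonicity of $M^\varepsilon$, while $B(y,r) \cap (-r,r)^d \subset B(y,n+1) \cap (-n-1,n+1)^d = \emptyset$, and the $a$-neighborhood $B(y,r+a) \subset B(y,n+1+a)$ is Poisson-empty. This yields the claim. The main obstacle is precisely this calibration of the discretization: packing at integer scales with the slightly enlarged radius $n+1+a$ rather than $n+a$ is what makes one empty ball usable simultaneously for every $r \in [n,n+1]$, while the $(1+\varepsilon)$-factor built into $M^\varepsilon$ supplies the strictly positive exponential margin that absorbs the polynomial loss from packing and keeps the Borel--Cantelli sum convergent.
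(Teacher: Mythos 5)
Your proof is correct and follows essentially the same route as the paper's: pack disjoint balls at integer scales in the big box, use independence of the Poisson counts over disjoint balls to bound the probability that none is empty, sum via Borel--Cantelli, and pass to real $r$ by monotone interpolation. The only difference is bookkeeping: the paper rounds $r$ up to $[r]+1$ and compensates by running the integer-scale argument with $M^{\varepsilon/2}$ (using $M^{\varepsilon/2}([r]+1)\leq M^{\varepsilon}(r)$), whereas you round down to $n=\lfloor r\rfloor$ and inflate the ball radius to $n+1+a$, spending the $\varepsilon$-margin inside the exponent $(1+\varepsilon)n^d-(n+1+a)^d$ instead; both calibrations work.
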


\noindent{\bf Proof.} Assume first that $r=m\in\mathbb Z$. Then
\begin{equation}\label{eq:no_Poisson}
\mathbb Q[\mbox{the ball $B(0,{m+a})$ contains no Poisson points}] = 
{\rm e}^{-\omega_d\rho(m+a)}.
\end{equation}
The {equality} (\ref{eq:no_Poisson}) is true for any translate of $B(0,{m+a}).$ Inside the box $(-M^{\varepsilon/2}(m), M^{\varepsilon/2}(m))^d,$ one can pack $a_m:=\left[\frac{2 M^{\varepsilon/2}(m)}{ 2(m+a)}\right]^d$
disjoint boxes of size  $2m$ that are $(2a)-$separated. Open balls of radius $(m+a),$  `concentric' with those boxes are disjoint. As the realizations of the cloud over disjoint sets are independent random variables, the probability that each such (small) ball contains at least one Poisson point (denote this event by $\mathcal A_m^0$) equals to
\[
\mathbb Q[\mathcal A_m^0]= (1-{\rm e}^{-\omega_d\rho{(m+a)^d}})^{a_m}.\]
We would like to produce a ball with radius $(m+a)$ that is both: free of Poisson points and separated from zero, so that we exclude from our considerations the boxes whose closure might contain zero, at most $2^d$ of them.
Let $\mathcal A_m$ be the event that `every small ball from $(-M^{\varepsilon/2}(m), M^{\varepsilon/2}(m))^d\setminus (-{m-a},{m+a})^d,$ arising as above, contains a Poisson point', then
\[
\mathbb Q[\mathcal A_m]= \left(1-{\rm e}^{-\omega_d\rho{(m+a)^d}}\right)^{a_m-2^d}.\]
Using an elementary inequality $(1-x)\leq {\rm e}^{-x}$ we can write
\[\mathbb Q[\mathcal A_m] \leq\exp\left\{-{\rm e}^{-\omega_d\rho {(m+a)^d}}\left(\frac{M^{\varepsilon/2}(m)}{(m+a)}\right)^d(1-o(1))\right\},\;\;\; \mbox{for } m\to\infty,\]
and the expression in the exponent is equal to (recall $r=m$)
\[-(1-o(1))\,{\rm e}^{\omega_d\rho \frac{\varepsilon}{2} \,m^d{(1-o(1))}},\quad m\to\infty\]
so that
\[\sum_{m=1}^\infty \mathbb Q[\mathcal A_m]<\infty.\]
From the Borel-Cantelli lemma we get that $\mathbb Q-$almost surely
there is a number $m_0>0$ such that for $m>m_0$ the set $(-M^{\varepsilon/2}(m), M^{\varepsilon/2}(m))^d\setminus (-{m-a},{m+a})^d$ contains a ball of radius ${(m+a)}$ free of Poisson points. {In particular, if $r>m_0$ is a real number, then} we can find an empty ball of size {$[r]+1 + a$} included in the big box $(-M^{\varepsilon/2}([ r]+1), M^{\varepsilon/2}([ r]+1))^d$ and separated from zero. Since
\[ M^{\varepsilon/2}([r]+1)\leq M^\varepsilon(r)\;\;\; \mbox{for sufficiently large }r,\]
the lemma follows. \hfill $\Box$

\medskip

We also quote Lemma 3.2  from \cite{bib:Fuk}  (we have $\alpha=1$ in present case).
\begin{lemma}\label{lem:potential} Suppose that the profile function $W$ is compactly supported {and bounded}. Then
$\mathbb Q-$almost-surely, for sufficiently large $R$ one has
\begin{equation}\label{eq:pot-est}
\sup_{x\in (-R,R)^d} V_\omega(x) \leq 3d \log R.
\end{equation}
\end{lemma}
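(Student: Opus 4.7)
The plan is to dominate $V^{\omega}(x)$ pointwise by a Poisson point count on a small ball, and then control the supremum of those counts over $(-R,R)^d$ by a lattice-covering argument, the standard Poisson tail bound, and Borel--Cantelli along integer scales. The statement has nothing to do with the process $X$; only the random potential enters.

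Pick $a > 0$ with $\supp W \subset \overline{B(0,a)}$ and set $M := \|W\|_\infty < \infty$. Then
\[V^{\omega}(x) = \int_{B(x,a)} W(x-y)\,\mu^{\omega}(dy) \leq M\,\mu^{\omega}(\overline{B(x,a)}),\]
so it suffices to show that $\qpr$-a.s., for all sufficiently large $R$, $\sup_{x\in(-R,R)^d}\mu^{\omega}(\overline{B(x,a)}) \leq (3d/M)\log R$. I would then partition $\R^d$ by the grid $\cQ$ of closed axis-parallel cubes of side $2a$ with vertices in $2a\Z^d$. Any closed ball of radius $a$ meets at most $2^d$ cubes of $\cQ$, hence
\[\sup_{x\in(-R,R)^d}\mu^{\omega}(\overline{B(x,a)}) \;\leq\; 2^d\max_{Q\in\cQ(R)}\mu^{\omega}(Q),\]
where $\cQ(R)$ denotes the (at most $c_d R^d$) cubes of $\cQ$ that meet $(-R-2a,R+2a)^d$.

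The counts $\{\mu^{\omega}(Q)\}_{Q\in\cQ}$ are i.i.d.\ Poisson with mean $\lambda := \rho(2a)^d$. Fix a small $\varepsilon \in (0,3d)$ and set $k_R := \lceil c\log R\rceil$ with $c := (3d-\varepsilon)/(M 2^d)$. The Chernoff-type bound $\qpr(\mathrm{Poisson}(\lambda)\geq k) \leq (e\lambda/k)^k$ yields, after taking logarithms,
\[\qpr\bigl(\mu^{\omega}(Q)\geq k_R\bigr) \;\leq\; \exp\bigl(-c\log R\,(\log\log R - C')\bigr),\]
with $C'$ depending on $a,\rho,d,M$. For $R$ large enough this is $\leq R^{-(d+2)}$, so a union bound over the $c_d R^d$ cubes in $\cQ(R)$ gives $\qpr\{\exists Q\in\cQ(R): \mu^{\omega}(Q)\geq k_R\} \leq C R^{-2}$, summable along $R\in\N$. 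Borel--Cantelli then shows that $\qpr$-a.s., for all large integers $m$,
\[\sup_{x\in(-m,m)^d}V^{\omega}(x) \;\leq\; M\cdot 2^d\,(c\log m + 1) \;=\; (3d-\varepsilon)\log m + M\cdot 2^d \;\leq\; 3d\log m,\]
where the last step absorbs the additive constant using $\varepsilon\log m \geq M\cdot 2^d$ for $m$ large. For real $R\in[m,m+1)$, monotonicity of the sup in $R$ together with $\log(m+1)/\log R\to 1$ extends the estimate to all sufficiently large real $R$.

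The main technical point, and the only mildly delicate step, is the constant calibration above: $c$ must be taken strictly less than $3d/(M 2^d)$ in order to leave slack for the additive $M\cdot 2^d$, and the Poisson tail then beats the union bound only past an $M$-, $d$-, $\varepsilon$-dependent threshold. This is harmless, since the conclusion is asserted only for sufficiently large $R$. The whole argument is a straightforward adaptation of Fukushima's Lemma 3.2.
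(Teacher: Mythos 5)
Your proof is correct. Note that the paper itself does not prove this lemma: it simply quotes it as Lemma 3.2 of Fukushima \cite{bib:Fuk}, and your argument (dominate $V^\omega(x)$ by $\|W\|_\infty$ times the Poisson count on $\overline{B(x,a)}$, cover by a grid of mesh-$2a$ cubes, apply the Poisson Chernoff bound $\qpr(\mathrm{Poisson}(\lambda)\geq k)\leq (e\lambda/k)^k$ with $k\asymp c\log R$, union-bound over $O(R^d)$ cubes, and conclude by Borel--Cantelli along integers) is exactly the standard proof underlying that cited lemma; the key point, which you identify, is that the tail decays like $\exp(-c\log R\,\log\log R)$ and so beats the polynomial union bound for every $c>0$, which is why the constant $3d$ on the right-hand side requires no relation to $\|W\|_\infty$, $a$ or $\rho$. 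Two cosmetic remarks: with \emph{closed} grid cubes a closed ball of radius $a$ can in degenerate positions meet $3$ rather than $2$ cubes per coordinate, so either use half-open cubes or replace $2^d$ by $3^d$ (the calibration of $c$ changes trivially); and in passing from integer $m$ to real $R\in[m,m+1)$ you should compare with the un-absorbed bound $(3d-\varepsilon)\log(m+1)+\|W\|_\infty 2^d\leq 3d\log R$ for large $R$, i.e.\ keep the $\varepsilon$-slack at that step rather than the already-absorbed inequality $\leq 3d\log m$ --- which is what your remark about $\log(m+1)/\log R\to 1$ is implicitly doing.
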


\subsection{A general lower bound}
\noindent
Let $R > R_0 >0$ be given and let $p^{U_R}(t,x,y)$ be the Dirichlet kernel of our process $\pro X$ in the box $U_R:=(-R,R)^d$. To begin with, we introduce the following notation:
\begin{equation}\label{eq:G-def}
G(R_0,R):=\inf_{ \frac{R_0}{2} \leq |y| \leq \frac{R}{2}} p^{U_R}(1,0,y).
\end{equation}
Also, recall that $\lambda_1^{{(\alpha)}}(B(0,1))$ is the principal Dirichlet eigenvalue of the symmetric $\alpha$--stable process defined by (\ref{eq:psi-stable}) in the unit ball $B(0,1)$ and $\omega_d$ is the volume of this ball.

We now present our main theorem in this section.
\begin{theorem}\label{thm:lower_bound}
Let $X$ be a symmetric strong Feller L\'evy process with {characteristic} exponent $\psi$ satisfying \eqref{eq:Lchexp} { and \eqref{eq:int}}.
Moreover, suppose that there exist $\alpha\in (0,2]$ and {$K>0$} such that
\begin{equation}\label{eq:lower-1}
\lambda_1^{\psi}(B(0,R)) \leq {K}  R^{-\alpha}\lambda^{{(\alpha)}}_1(B(0,1))+o(R^{-\alpha}),\quad R\to\infty,
\end{equation}
and that $V^{\omega}$ is a Poissonian potential defined in \eqref{eq:pot} with bounded profile $W$ of finite range.
Then for any $x\in\mathbb R^d$, $\kappa, R_0>0$ and any nonincreasing function $F:[0, \infty) \to [0,\infty)$ such that $\lim_{r \to \infty} F(r) = 0$, $\mathbb Q-$almost surely,
\[\liminf_{t\to\infty}\frac{\log u^\omega(t,x)+3d\log h_{F,\alpha,\kappa}(t)
+\left|\log {j_{R_0, F, \alpha, \kappa}(t)}\right|}{g(t)}
\geq - K \left(\frac{\omega_d\rho}{d}\right)^{\alpha/d} \lambda_1^{{(\alpha)}}(B(0,1)),\]
where  $h_{F,\alpha,\kappa}(t)$ was  defined by \eqref{eq:ha_te},  $g(t)= \frac{t}{(\log h_{F,\alpha,\kappa}(t))^{\alpha/d}}$, and ${j_{R_0, F, \alpha, \kappa}(t)}:=G\left(R_0,\frac{2\sqrt{d} h_{F,\alpha,\kappa}(t)}{(\log h_{F,\alpha,\kappa}(t))^{\frac{2}{d}+2}}\right)$ (the function ${j_{R_0, F, \alpha, \kappa}(t)}$ is well-defined for large $t$'s).
\end{theorem}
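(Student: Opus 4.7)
The strategy is the classical one: $\qpr$-a.s.\ locate a large ball $B(x_0^\omega,r(t))$ on which $V^\omega\equiv 0$, force the process to travel there during the time interval $[0,1]$, and confine it inside for the remaining time $[1,t]$. Translation invariance of $X$ and stationarity of the Poisson cloud allow me to assume without loss of generality that $x=0$. Setting $h(t):=h_{F,\alpha,\kappa}(t)$ and, for $\varepsilon>0$,
\[r(t):=\left(\frac{d\log h(t)}{\omega_d\rho(1+\varepsilon)}\right)^{\!1/d},\]
a direct substitution in \eqref{eq:m-M} yields $M^\varepsilon(r(t))=h(t)/(\log h(t))^{2+2/d}$; correspondingly I use the box $U_{R(t)}$ of half-side $R(t):=2\sqrt d\,M^\varepsilon(r(t))$, which is precisely the scale at which $G(R_0,\cdot)$ is evaluated in $j_{R_0,F,\alpha,\kappa}(t)$.

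I apply Lemma \ref{lem:empty_box} (with $a$ equal to the range of $W$) to obtain, $\qpr$-a.s.\ and for all large $t$, a center $x_0^\omega$ such that $B(x_0^\omega,r(t))\subset U_{R(t)}\setminus(-r(t),r(t))^d$ and $V^\omega\equiv 0$ on $B(x_0^\omega,r(t))$; in particular $r(t)\le|x_0^\omega|$ and $|x_0^\omega|+r(t)/2\le R(t)/2$. Lemma \ref{lem:potential} gives simultaneously $\sup_{y\in U_{R(t)}}V^\omega(y)\le 3d\log R(t)\le 3d\log h(t)$ eventually. Restricting the Feynman--Kac expectation to the event $\{\tau_{U_{R(t)}}>1,\ X_1\in B(x_0^\omega,r(t)/2),\ X_s\in B(x_0^\omega,r(t))\text{ for all }s\in[1,t]\}$, on which the exponent contributes at most $3d\log h(t)$ on $[0,1]$ and vanishes on $[1,t]$, and using the Markov property at time $1$, I obtain
\[u^\omega(t,0)\ge e^{-3d\log h(t)}\,\pr_0\!\bigl[\tau_{U_{R(t)}}>1,X_1\in B(x_0^\omega,r(t)/2)\bigr]\inf_{y\in B(x_0^\omega,r(t)/2)}\pr_y\!\bigl[\tau_{B(x_0^\omega,r(t))}>t-1\bigr].\]

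The transit factor equals $\int_{B(x_0^\omega,r(t)/2)}p^{U_{R(t)}}(1,0,y)\,dy$; the containment $B(x_0^\omega,r(t)/2)\subset\{R_0/2\le|y|\le R(t)/2\}$ (valid for large $t$ since $|x_0^\omega|\ge r(t)\to\infty$) then yields the lower bound $|B(x_0^\omega,r(t)/2)|\cdot G(R_0,R(t))=c\,r(t)^d\,j_{R_0,F,\alpha,\kappa}(t)$. The confinement factor is controlled by the spectral inequality $\pr_y[\tau_B>s]\ge e^{-s\lambda_1^\psi(B)}\phi_1^B(y)/\|\phi_1^B\|_\infty$ applied to $B=B(x_0^\omega,r(t))$, with $\phi_1^B$ the nonnegative principal Dirichlet eigenfunction; the ratio $\phi_1^B(y)/\|\phi_1^B\|_\infty$ is bounded below on the half-radius ball by a factor decaying at most polynomially in $r(t)$. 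Hypothesis \eqref{eq:lower-1} with $R=r(t)$ then delivers
\[(t-1)\lambda_1^\psi(B(0,r(t)))\le K\lambda_1^{(\alpha)}(B(0,1))\left(\frac{\omega_d\rho(1+\varepsilon)}{d}\right)^{\!\alpha/d}g(t)(1+o(1)).\]

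Taking logarithms of the two-factor bound, transporting the terms $-3d\log h(t)$ and $-|\log j_{R_0,F,\alpha,\kappa}(t)|$ to the left-hand side exactly as in the statement, dividing by $g(t)$, and sending $\varepsilon\downarrow 0$ through rationals conclude the proof; the residual contributions (namely $\log r(t)^d$ and the logarithm of the eigenfunction ratio) are $O(\log\log h(t))=o(g(t))$. The main obstacle is the uniform control of the eigenfunction ratio as $r(t)\to\infty$: for the broad class of symmetric L\'evy processes considered here (not merely scale-invariant stable ones) one needs interior regularity of $\phi_1^B$, equivalent to a suitable Dirichlet heat kernel lower bound on balls of radius $r(t)$, which is the deeper ingredient hidden behind the appearance of the function $G(R_0,\cdot)$ in the theorem statement.
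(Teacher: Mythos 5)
Your overall architecture is the same as the paper's: the clearing radius $r(t)$ solving $\omega_d\rho(1+\varepsilon)r(t)^d=d\log h(t)$, the box of half-side $2\sqrt d\,M^{\varepsilon}(r(t))=2\sqrt d\,h(t)/(\log h(t))^{2/d+2}$, Lemmas \ref{lem:empty_box} and \ref{lem:potential}, a time-$1$ transit step paying $e^{-3d\log R(t)}$ together with $G(R_0,\cdot)$, a confinement step paying $e^{-(t-1)\lambda_1^{\psi}(B(0,r(t)))}$, and \eqref{eq:lower-1} to turn the eigenvalue into the constant $K(\omega_d\rho/d)^{\alpha/d}\lambda_1^{(\alpha)}(B(0,1))$; the bookkeeping of the error terms ($3d\log R(t)$ versus $3d\log h(t)$, $\log r(t)^d=O(\log\log h(t))=o(g(t))$) is also correct. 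The problem is the confinement step. By taking $\inf_{y\in B(x_0^{\omega},r(t)/2)}\pr_y[\tau_{B(x_0^{\omega},r(t))}>t-1]$ and bounding it through $e^{-(t-1)\lambda_1}\phi_1(y)/\|\phi_1\|_{\infty}$, you need a pointwise lower bound for the principal Dirichlet eigenfunction on the half-ball that is at worst polynomially small in $r(t)$, uniformly as $r(t)\to\infty$. You assert this, acknowledge it as "the main obstacle", but do not prove it, and it does not follow from the hypotheses of the theorem (strong Feller, \eqref{eq:int}, \eqref{eq:lower-1}): for the generality aimed at here (anisotropic, product or partly discrete L\'evy measures, cf.\ Examples \ref{ex:polynomial_irr_2} and \ref{ex:ex_irr}) such interior ground-state bounds are exactly the Harnack/heat-kernel regularity the theorem deliberately avoids assuming. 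Your closing remark that this ingredient is "hidden behind $G(R_0,\cdot)$" also mislocates the difficulty: $G$ controls only the time-one Dirichlet kernel of the large box (the transit from $0$ to the clearing), not the eigenfunction of the clearing.

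The paper's proof is structured precisely so that no such lower bound is needed. Instead of an infimum over starting points in the half-ball, it keeps the integral over $y$ in the whole clearing $B_t$, bounds $p^{V^{\omega}}(1,0,y)$ below by its infimum (this is where Lemma \ref{lem:potential} and $G$ enter), and then estimates
\[
\int_{B_t}\int_{B_t}p^{B_t}(t-1,y,x)\,\frac{\phi(x)}{\|\phi\|_{\infty}}\,{\rm d}x\,{\rm d}y
\;\geq\; \frac{e^{-(t-1)\lambda_1^{\psi}(B_t)}}{\|\phi\|_{\infty}}\int_{B_t}\phi(y)\,{\rm d}y
\;\geq\; \frac{e^{-(t-1)\lambda_1^{\psi}(B_t)}}{\|\phi\|_{\infty}^{2}},
\]
using only $\int\phi^2=1$ and an \emph{upper} bound on $\|\phi\|_{\infty}$ (polynomial in $\lambda_1^{-1}$, and in fact even a bounded $\|\phi\|_{\infty}$ would do, since the resulting logarithm is absorbed in the $O(\log\log h(t))$ error), which follows from the boundedness of the free kernel guaranteed by \eqref{eq:int}. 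If you replace your infimum over the half-ball by this averaged bound, your argument closes; as written, the confinement step is unsupported at the stated level of generality and is the one genuine gap in the proposal.
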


\begin{proof}
 For simplicity, we run the proof for $x=0$ only; for a general $x\in \mathbb R^d$ the proof is identical. Let $\kappa>0$ and {$R_0>0$} be given. As in the proof of Theorem \ref{thm:upper}, we will write $h$ for $h_{F,\alpha,\kappa}$.
Let $\varepsilon>0$ be given and let $a$ be the range of the potential profile $W$,
then for $t>0$ let $m(t)$ and $M(t)(=M^\varepsilon(m(t)))$ be related by
(\ref{eq:m-M}). The potential range $a$ is fixed so it does not enter the notation.  For the time being  we require only that $ m(t)\to\infty$ when $t\to\infty.$  Eventually, the number $m(t)$ will be chosen of order $h(t)$ from
(\ref{eq:ha_te}), but in such a manner  that $M^\varepsilon(m)$ will bear no $\varepsilon-$dependence.

 Pick $\omega $ outside the exceptional sets from
Lemmas \ref{lem:empty_box} and  \ref{lem:potential}. Let ${B_t}$ be the open ball
of radius $m(t)$ whose $a-$neighbourhood contains no Poisson points, obtained
from the statement of Lemma \ref{lem:empty_box}.
As there is no interaction with the potential inside this ball, we have
 that $p^{B_t,V^{\omega}}(\cdot,\cdot,\cdot)=p^{{B_t}}(\cdot,\cdot,\cdot),$ and consequently $\lambda_1^{\psi}({B_t},V^\omega)=\lambda_1^\psi({B_t})$ (recall that
$\lambda^{\psi}(U,{V^{\omega}})$ and $\lambda_1^\psi(U)$ denote the principal Dirichlet eigenvalue of the process in $U$ under the influence of the
potential $V^\omega,$ or  without potential interaction, respectively).

 Let $\phi$ be the normalized, positive Dirichlet $L^2-$eigenfuntion,
 supported in ${B_t},$  corresponding to
this principal eigenvalue.
For sufficiently large $t$ we have the following chain of inequalities:
\begin{eqnarray*}
u^\omega(t,0)&=& \int_{\mathbb R^d} p^{{V^{\omega}}}(t,0,x) \,{\rm d}x = \int_{\mathbb R^d}\int_{\mathbb R^d} p^{{V^{\omega}}}(1,0,y)p^{{V^{\omega}}}(t-1,y,x)\,{\rm d}y\,{\rm d}x \\
& \geq & \int_{{B_t}}\int_{B_t} p^{{V^{\omega}}}(1,0,y)p^{{V^{\omega}}}(t-1,y,x)\,{\rm d}y\,{\rm d}x
\\
&\geq & \left(\inf_{y\in B_t} p^{{V^{\omega}}}(1,0,y) \right) \int _{B_t}  \left(\int_{B_t} p^{{V^{\omega}}}(t-1,y,x)\,\frac{\phi(x)}{\|\phi\|_\infty} \,{\rm d}x\right){\rm d}y\\
&\geq & \left(\inf_{y\in B_t} p^{{V^{\omega}}}(1,0,y) \right) \int _{B_t}  \left(\int_{B_t} p^{{V^{\omega}},B_t}(t-1,y,x)\,\frac{\phi(x)}{\|\phi\|_\infty} \,{\rm d}x\right){\rm d}y\\
&= & \frac{1}{\|\phi\|_\infty}\left(\inf_{y\in B_t} p^{{V^{\omega}}}(1,0,y) \right) \int _{B_t} {\rm e}^{-\lambda_1^\psi({V^{\omega}},{B_t})(t-1)}\phi(y)\,{\rm d}y\\
&\geq & \frac{1}{\|\phi\|_\infty^2}\left(\inf_{y\in B_t} p^{{V^{\omega}}}(1,0,y) \right) {\rm e}^{-\lambda_1^\psi({B_t})(t-1)}\int_{B_t} \phi(y)^2\,{\rm d}y \\
&\geq& \frac{1}{\|\phi\|_\infty^2} \,{\rm e}^{-t\lambda_1^\psi({B_t}) }\left(\inf_{y\in B_t} p^{{V^{\omega}}}(1,0,y) \right).
\end{eqnarray*}
From the translation invariance of the process and assumption (\ref{eq:lower-1})  we see that
\begin{equation}\label{eq:u10}
\lambda^\psi_1({B_t})\leq {K}m(t)^{-\alpha}\lambda_1^{(\alpha)}(B(0,1))+o(m(t)^{-\alpha}),\quad\mbox{ as }t\to\infty.
\end{equation}
Also, it is classical to see that $\|\phi\|_\infty\leq {c_1} \lambda^\psi_1({B_t})^{d/(2\alpha)},$ which
from \eqref{eq:u10} can be estimated as ${c_2} m(t)^{-d/2},$
so that $\|\phi\|_\infty^2 m(t)^d\leq {c_3}.$
The chain of inequalities continues as
\begin{equation}\label{eq:a}\geq\,
{c_4} \left(\inf_{y\in B_t} p^{{V^{\omega}}}(1,0,y)\right) \, {{\rm e}^{-t\lambda^\psi_1(B_t)}} m(t)^d.
\end{equation}

To estimate the infimum of the kernel $p^{{V^{\omega}}}(1,0,y)$ for $y\in B_t,$ we take $J_t=(-2 \sqrt{d} M(t),2 \sqrt{d} M(t))^d.$ For $y\in B_t$ one has $y \in (-M(t),M(t))^d \setminus (-m(t),m(t))^d$ so that for sufficiently large $t$ one has
$R_0\leq y\leq \sqrt d M(t)$.
Using {\eqref{eq:pot-est} and} \eqref{eq:G-def} we can write:
{
\begin{align}\label{eq:point1}
p^{{V^{\omega}}}(1,0,y)&\geq p^{J_t,{V^{\omega}}}(1,0,y)\geq{\rm e}^{-3d \log (2 \sqrt{d} M(t))}p^{J_t}(1,0,y) \\
&\geq {c_5}{\rm e}^{{-3d\log(2\sqrt{d}M(t))}} G(R_0,2\sqrt{d}M(t)).\nonumber
\end{align}}

Inserting these estimates inside (\ref{eq:a}) and using {\eqref{eq:u10} again,}
we obtain that $\mathbb Q-$a.s., for sufficiently large $t$:
\begin{align*}\label{eq:b}
& u^\omega (t,0) \\ & \geq {c_{6}}{\exp}\left\{-3d\log (2\sqrt{d}M(t)) -|\log G(R_0, 2\sqrt{d}M(t))| -{K} t m(t)^{-\alpha}(\lambda_1^{(\alpha)}(B(0,1))+o(1)) + d\log m(t)\right\}. \nonumber
\end{align*}
At this point we declare the scale $m(t).$
Recall that all this reasoning is performed for a fixed number $\varepsilon>0.$
Set $m(t)=m_\varepsilon(t)$  to be the solution of the equation (unique for large $t$)
\begin{equation}\label{eq:m_epsilon}
\omega_d\rho (1+\varepsilon)(m_\varepsilon(t))^d=d\log h(t),
\end{equation}
where $h(t)$ was given by (\ref{eq:ha_te}).
Consequently, using \eqref{eq:m-M},
\[M(t)=M^\varepsilon(m_\varepsilon(t))= \frac{h(t)}{(\log h(t))^{{\frac{2}{d}}+2}}.\]
It follows
\begin{align}\label{eq:c}
\log u^\omega(t,0)  & + {3d\log \left(\frac{{2\sqrt{d}h(t)}}{(\log h(t))^{\frac{2}{d}+2}}\right)}  + \left|\log G\left({R_0, \frac{2\sqrt{d}h(t)}{(\log h(t))^{\frac{2}{d}+2}}}\right)\right| \\
&\geq  -K tm_\varepsilon(t)^{-\alpha}(\lambda_1^{(\alpha)}(B(0,1))+o(1))+ d\log m_\varepsilon(t) + O(1),\quad t\to\infty. \nonumber
\end{align}
Since $h(t)\to\infty$ as $t\to\infty$, it is immediate that $m_\varepsilon(t)\to\infty$ when $t\to\infty.$ 
Due to (\ref{eq:m_epsilon}) we get
 \[\lim_{t\to\infty}\left( -\frac{t(m_\varepsilon(t))^{-\alpha}}{g(t)}\right)=  -\left(\frac{\omega_d\rho(1+\varepsilon)}{d}\right)^{\alpha/d}\quad \mbox{and } \quad d \log m_\varepsilon(t) = \log \log h(t) + O(1) \ \ \ \mbox{when \ $t \to \infty$.}  \]
Further, from the relation  (\ref{eq:eq-ha}) defining $h,$ we see that
 {\[g(t)=\frac{t}{(\log h(t))^{\alpha/d}} \geq c_7
 \,\log h(t). \]}
 Consequently, for sufficiently large $t$ we get
 \[0\leq\frac{\log\log h(t)}{g(t)}\to 0\quad \text{when} \quad t\to\infty.\]
These properties give that, $\mathbb Q-$almost surely,
 \[\liminf_{t\to\infty}\frac{\log u^\omega(t,0)+ {3d\log h(t)}+{\left|\log {j_{R_{0},F,\alpha,\kappa}(t)}\right|}}{g(t)}\geq - K\left(\frac{\omega_d\rho(1+\varepsilon)}{d}\right)^{\alpha/d}\lambda_1^{(\alpha)}(B(0,1)).\]
Letting $\varepsilon\to 0$ through rationals gives the statement.
\end{proof}

The next corollary gives a direct lower bound for $\liminf \frac{\log u^\omega(t,x)}{g(t)},$ similar to that in Corollary \ref{cor:upper}.

\begin{corollary} \label{cor:lower}
Let the assumptions of the above theorem be satisfied. In particular, let $F$ and $G$ be the monotone
functions appearing in its statement.
If there exist {$Q_1 \in (0,\infty]$ and $Q_2 \in [0,\infty)$} such that
\begin{equation}\label{eq:A_lower}
\liminf_{r \to \infty}\frac {|\log F(r)|}{\log (r)} \geq Q_1
\end{equation}
and
\begin{equation}\label{eq:B_lower}
\limsup_{r \to \infty}\frac {\left|\log G{\left(R_0, \frac{2\sqrt{d}r}{(\log r)^{\frac{2}{d}+2}}\right)}\right|}{(r \wedge |\log F(r)|)+ (d/2)\log r} \leq Q_2,
\end{equation}
then
\begin{equation}\label{eq:to_zero_A_B}
\limsup_{t \to \infty} \frac{\log h_{F,\alpha,\kappa}(t)}{g(t)} \leq \frac{2}{2 Q_1+ d} \left(\frac{\kappa}{d}\right)^{\alpha/d} \quad \text{and} \quad
\limsup_{t \to \infty} \frac{\left|\log {j_{R_0,F,\alpha,\kappa}(t)}\right|}{g(t)} \leq Q_2 \left(\frac{\kappa}{d}\right)^{\alpha/d}
\end{equation}
and, consequently, for every fixed $x \in \R^d$, one has
\begin{equation}\label{eq:statement-upper:A}
\liminf_{t \to \infty} \frac{\log u^{\omega}(t,x)}{g(t)} \geq -{K}\left(\frac{\omega_d\rho}{d}\right)^{\alpha/d}\lambda_1^{(\alpha)}(B(0,1)) -\left(\frac{6d}{d+2Q_1} + Q_2\right)\,\left(\frac{\kappa}{d}\right)^{\alpha/d}, \quad \qpr-\text{a.s.}
\end{equation}
In particular, when the assumptions (\ref{eq:A_lower}) and (\ref{eq:B_lower})  hold with  $Q_1=\infty$ and $Q_2=0,$
then
\begin{equation}\label{eq:A-infty-B-zero}
\liminf_{t \to \infty} \frac{\log u^{\omega}(t,x)}{g(t)} \geq -{K}\left(\frac{\omega_d\rho}{d}\right)^{\alpha/d}
\lambda_1^{(\alpha)}(B(0,1)).
\end{equation}
\end{corollary}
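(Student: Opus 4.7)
My plan is to establish the two displayed estimates in \eqref{eq:to_zero_A_B} separately, and then combine them with the conclusion of Theorem \ref{thm:lower_bound} to deduce \eqref{eq:statement-upper:A}. The first bound, on $\log h_{F,\alpha,\kappa}(t)/g(t)$, is essentially identical to the one already proved in Corollary \ref{cor:upper}, since it depends only on the asymptotic behaviour of $F$ through the function $f_{F,\alpha,\kappa}$ and not on which side of the asymptotics we are on. I would copy that argument verbatim: from \eqref{eq:A_lower}, for any $\widetilde Q_1 \in (0,Q_1)$ we have $|\log F(r)| \geq \widetilde Q_1 \log r$ for $r$ large, hence
\[
f_{F,\alpha,\kappa}(r)\geq \left(\widetilde Q_1 +\tfrac{d}{2}\right)\left(\tfrac{d}{\kappa}\right)^{\alpha/d}(\log r)^{1+\alpha/d}
\]
for all large $r$. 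Inverting and dividing by $g(t)$ yields $\limsup_t \log h_{F,\alpha,\kappa}(t)/g(t) \leq \frac{2}{2\widetilde Q_1+d}(\kappa/d)^{\alpha/d}$; letting $\widetilde Q_1 \nearrow Q_1$ gives the first estimate in \eqref{eq:to_zero_A_B}.

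For the second estimate, the key idea is to feed the defining relation \eqref{eq:eq-ha} for $h = h_{F,\alpha,\kappa}$ into the hypothesis \eqref{eq:B_lower}. Fix $\widetilde Q_2 > Q_2$. By \eqref{eq:B_lower}, for all sufficiently large $r$,
\[
\left|\log G\!\left(R_0,\tfrac{2\sqrt{d}\,r}{(\log r)^{\frac{2}{d}+2}}\right)\right|\leq \widetilde Q_2\Bigl((r\wedge |\log F(r)|) + \tfrac{d}{2}\log r\Bigr).
\]
Substituting $r = h(t)$ (which tends to infinity with $t$) and using \eqref{eq:eq-ha}, the right-hand side becomes exactly $\widetilde Q_2 \cdot t\bigl(\tfrac{\kappa}{d\log h(t)}\bigr)^{\alpha/d} = \widetilde Q_2(\kappa/d)^{\alpha/d}\,g(t)$. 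Dividing by $g(t)$ and sending $\widetilde Q_2 \searrow Q_2$ yields the second bound in \eqref{eq:to_zero_A_B}. I expect this substitution step, making the neat telescoping between the defining equation of $h$ and the hypothesis \eqref{eq:B_lower}, to be the only non-routine part of the argument; everything hinges on recognising that \eqref{eq:B_lower} is phrased precisely so that the quantity appearing on its right-hand side is, after setting $r=h(t)$, a multiple of $g(t)$.

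Finally, to obtain \eqref{eq:statement-upper:A}, I would rewrite the conclusion of Theorem \ref{thm:lower_bound} as
\[
\liminf_{t\to\infty}\frac{\log u^{\omega}(t,x)}{g(t)} \;\geq\; -K\!\left(\tfrac{\omega_d\rho}{d}\right)^{\!\alpha/d}\!\lambda_1^{(\alpha)}(B(0,1)) \;-\; \limsup_{t\to\infty}\frac{3d\log h_{F,\alpha,\kappa}(t)+|\log j_{R_0,F,\alpha,\kappa}(t)|}{g(t)},
\]
valid $\qpr$-a.s., and then bound the last $\limsup$ by $\limsup$-subadditivity together with the two estimates just proved:
\[
\limsup_{t\to\infty}\frac{3d\log h_{F,\alpha,\kappa}(t)}{g(t)}\leq \frac{6d}{d+2Q_1}\left(\tfrac{\kappa}{d}\right)^{\!\alpha/d}, \qquad \limsup_{t\to\infty}\frac{|\log j_{R_0,F,\alpha,\kappa}(t)|}{g(t)}\leq Q_2\left(\tfrac{\kappa}{d}\right)^{\!\alpha/d}.
\]
Adding these and substituting gives \eqref{eq:statement-upper:A}. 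The special case \eqref{eq:A-infty-B-zero} is then immediate from \eqref{eq:statement-upper:A} by noting that $\frac{6d}{d+2Q_1}\to 0$ as $Q_1\to\infty$ and that $Q_2=0$ kills the second correction term. No further verification of measurability or of the almost-sure set is needed, since the exceptional $\qpr$-null set is inherited directly from Theorem \ref{thm:lower_bound}, the two deterministic estimates on $h$ and $j$ being non-random.
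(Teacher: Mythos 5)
Your proposal is correct and follows essentially the same route as the paper: the first bound is the Corollary 3.1 argument repeated verbatim, the second is obtained by recognising via \eqref{eq:eq-ha} that $g(t)$ is exactly $(d/\kappa)^{\alpha/d}$ times the denominator in \eqref{eq:B_lower} evaluated at $r=h_{F,\alpha,\kappa}(t)\to\infty$, and the final estimate comes from combining Theorem \ref{thm:lower_bound} with these two limsup bounds. The only (harmless) detail left implicit, in both your write-up and the paper's, is that $1\wedge F(h(t))=F(h(t))$ for large $t$ since $F\to 0$.
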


\begin{proof}
The first bound in \eqref{eq:to_zero_A_B} follows from \eqref{eq:A_lower} exactly by the same argument as in Corollary \ref{cor:upper}.
To prove the second bound in \eqref{eq:to_zero_A_B} we write
$$
\frac{\left|\log j_{R_0}(t)\right|}{g(t)}
= \frac{\left|\log G\left({R_0,\frac{2\sqrt{d}h_{F,\alpha,\kappa}(t)}{(\log h_{F,\alpha,\kappa}(t))^{\frac{2}{d}+2}}}\right)\right|}{(d/\kappa)^{\alpha/d}\big((h_{F,\alpha,\kappa}(t) \wedge |\log F(h_{F,\alpha,\kappa}(t))|)+ (d/2)\log h_{F,\alpha,\kappa}(t)\big)},
$$ The desired bound {immediately follows from \eqref{eq:eq-ha}} once we recall that $h(t)\to\infty$ when $t\to\infty.$
\end{proof}

\section{Discussion of specific cases}\label{sec:specific}
We will apply the general results of previous sections to some particular processes, for which the assumptions of Theorems \ref{thm:upper} and \ref{thm:lower_bound} hold true.
Throughout this section we will work under the assumption that the L\'{e}vy-Khinchine exponent $\psi$ is close to the characteristic exponent of a symmetric $\alpha-$stable process near the origin.
More precisely, we assume the following condition.

\begin{itemize}
\item[\bf{(C)}] One has
$$
\psi(\xi)=\psi^{(\alpha)}(\xi)+ o(|\xi|^\alpha), \quad \text{when \ $|\xi| \to 0$,}
$$
where $\psi^{(\alpha)}$ is given by \eqref{eq:psi-stable}-\eqref{eq:psi-diff} for some $\alpha \in (0,2]$ and satisfies $\inf_{|\xi|=1} \psi^{(\alpha)}(\xi) >0$.
\end{itemize}

\smallskip
\noindent
{We will also assume some regularity on the behaviour of $\psi$ at infinity, a kind of Hartman-Wintner condition:
\begin{align} \label{eq:psi_inf}
 \frac{\psi(\xi)}{(\log|\xi|)^2}\to \infty \quad \text{as } |\xi|\to\infty.
\end{align}
Observe that under this assumption $e^{-t \sqrt{\psi(\cdot)}} \in L^1(\R^d)$, for every
$t>0$. In particular, $e^{-t \psi(\cdot)} \in L^1(\R^d)$, $t >0$, and \eqref{eq:int} holds for every $t_0>0$.
}

Under these assumptions, in the paper {\cite{bib:Okura81}} the annealed asymptotics of $u^\omega(t,x)$ was proven, and also {in \cite{bib:Okura}} the behaviour of the integrated density of states, $N^D(\lambda),$ was
established. We have the following.

\begin{theorem}\label{th:Okura} \cite[Theorem 6.2]{bib:Okura}
Let {$X$ be a symmetric} L\'{e}vy process whose characteristic exponent $\psi$ satisfies {\bf (C)} and \eqref{eq:psi_inf}, and let $V^{\omega}$ be a Poissonian potential defined in \eqref{eq:pot} with nonnegative and nonidentically zero $\cK^X_{\loc}$-class profile $W$ satisfying the condition $W(x)=o(|x|^{-d-\alpha})$, $|x|\to\infty.$  Then
\begin{equation}
\label{eq:Okura}
\lim_{\lambda\to 0^+} \lambda^{d/\alpha}\log N^D(\lambda)=-\rho (\lambda_{(\alpha)})^{d/\alpha}.
\end{equation}
The constant $\lambda_{(\alpha)}$ is given by the variational formula
\begin{equation}\label{eq:lambda}
\lambda_{(\alpha)}=\inf_G\lambda_1^{(\alpha)}(G),
\end{equation}
where the infimum is taken over all open sets $G\subset\mathbb R^d$ of unit Lebesgue measure.
\end{theorem}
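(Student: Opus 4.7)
I would follow the Pastur--Donsker--Varadhan strategy: (i) identify the Laplace transform of $N^D$ with an annealed trace functional, (ii) establish its large-$t$ exponential asymptotics, and (iii) invert via an exponential Tauberian theorem.

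For (i), the normalization in \eqref{eq:ids} gives
$$\tfrac{1}{(2R)^d}\tr P_t^{U_R,V^\omega}=\int_0^\infty e^{-\lambda t}\,\ell_R^\omega(d\lambda).$$
Taking $\ex_\qpr$ and letting $R\to\infty$, the $\qpr$-a.s.\ vague convergence $\ell_R^\omega\to\ell$ combined with the uniform bound $(2R)^{-d}\tr P_t^{U_R,V^\omega}\le p(t,0)$ (obtained from the pointwise domination $p^{U_R,V^\omega}(t,x,x)\le p(t,0)$) justifies the exchange of limits and yields
$$\Phi(t):=\int_0^\infty e^{-\lambda t}\,dN^D(\lambda)=\lim_{R\to\infty}(2R)^{-d}\ex_\qpr\bigl[\tr P_t^{U_R,V^\omega}\bigr].$$
Applying the Dynkin--Hunt formula \eqref{eq:HuntF} and using the space-stationarity of $V^\omega$ then identifies $\Phi(t)$ with $\ex_\qpr[p^{V^\omega}(t,0,0)]$, up to boundary errors irrelevant on the exponential scale.

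For (ii), one needs
$$\log\Phi(t)=-B\,t^{d/(d+\alpha)}(1+o(1)),\qquad t\to\infty,$$
for an explicit constant $B=B(\alpha,d,\rho,\lambda_{(\alpha)})$. The upper bound follows from the Donsker--Varadhan large-deviation principle for the empirical occupation measure of $X$; assumption \textbf{(C)} allows the variational problem for $-L$ to be transferred, at the appropriate scale, to that of the stable process $\psi^{(\alpha)}$, and the decay $W(x)=o(|x|^{-d-\alpha})$ forces the effective potential to act, to leading order, as the indicator of the Poisson-occupied region. For the lower bound I would use the classical Sznitman--Pastur balance: force the L\'evy bridge from $0$ to $0$ at time $t$ to stay inside a rescaled near-minimizer $rU_0$ of \eqref{eq:lambda} that is empty of Poisson points (probability $\ge e^{-\rho|rU_0|}=e^{-\rho\omega_d r^d}$) and pay the confinement cost $\exp(-t\lambda_1^\psi(rU_0))\asymp\exp(-t r^{-\alpha}\lambda_{(\alpha)})$; optimizing over $r\asymp t^{1/(d+\alpha)}$ yields both the rate $t^{d/(d+\alpha)}$ and the matching constant $B$.

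Step (iii) is then a direct application of de Bruijn's exponential Tauberian theorem: the equivalence
$$\log\Phi(t)\sim -Bt^{d/(d+\alpha)}\;\Longleftrightarrow\;\log N^D(\lambda)\sim -A\lambda^{-d/\alpha}$$
holds with $A$ and $B$ related by Legendre duality, and an elementary computation gives $A=\rho\,\lambda_{(\alpha)}^{d/\alpha}$, which is \eqref{eq:Okura}. The main obstacle is the sharp annealed lower bound in step (ii): because the infimum in \eqref{eq:lambda} is taken over all open sets of unit Lebesgue measure and, in the non-isotropic case, is not attained on a ball, one cannot simply reuse Lemma \ref{lem:empty_box}. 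The Borel--Cantelli construction of Poisson-empty regions has to be redone for near-minimizing shapes $U_0$, which demands a tiling/packing estimate adapted to $U_0$ plus careful control of boundary effects under the bridge conditioning; matching the resulting constant with the upper bound and verifying that the error $o(|\xi|^\alpha)$ in \textbf{(C)} is negligible on the scale $t^{d/(d+\alpha)}$ is where the bulk of the technical work lies.
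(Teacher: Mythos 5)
You should first note how the paper itself handles this statement: it does not prove it at all. Theorem \ref{th:Okura} is quoted verbatim from Okura (\cite[Theorem 6.2]{bib:Okura}, with the annealed input coming from \cite{bib:Okura81}), and the only original remark the authors add is that Okura's proof, written for continuous profiles $W$, goes through for $\cK^X_{\loc}$-class profiles. Your outline — Laplace transform of $N^D$ equals an annealed diagonal kernel, Donsker--Varadhan asymptotics at rate $t^{d/(d+\alpha)}$, then de Bruijn's exponential Tauberian theorem with Legendre duality — is exactly the classical route by which the cited result is established, so in spirit you are reconstructing the literature proof rather than offering an alternative.

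As a standalone proof, however, there is a genuine gap: everything in your step (ii) is asserted, not proved, and step (ii) \emph{is} the theorem. The two-sided annealed asymptotics with the sharp constant $\lambda_{(\alpha)}$, valid when $\psi$ is only asymptotically stable in the sense of \textbf{(C)} (with possibly non-isotropic $\psi^{(\alpha)}$ and only the Hartman--Wintner-type condition \eqref{eq:psi_inf} at infinity), is precisely the hard content of \cite{bib:Okura81}; saying that the upper bound ``follows from the Donsker--Varadhan LDP'' and that the variational problem ``transfers at the appropriate scale'' to $\psi^{(\alpha)}$ hides the transfer argument that condition \textbf{(C)} must support (compare Proposition \ref{prop:conditionA}, which only gives the easy eigenvalue upper bound, not a matching lower bound for the LDP rate function), and you yourself concede that the lower-bound construction for near-minimizing non-ball shapes $U_0$ is open. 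There is also a concrete slip in the balance computation: for a minimizer $U_0$ of unit Lebesgue measure, the emptiness probability of $rU_0$ is $e^{-\rho|rU_0|}=e^{-\rho r^d}$, not $e^{-\rho\omega_d r^d}$; carrying the spurious $\omega_d$ through the optimization would produce a constant inconsistent with \eqref{eq:Okura} (the $\omega_d$ only reappears in the isotropic case via $\lambda_{(\alpha)}=\omega_d^{\alpha/d}\lambda_1^{(\alpha)}(B(0,1))$). Finally, in step (i) the passage from vague convergence of $\ell_R^\omega$ to convergence of Laplace transforms needs the uniform-trace-bound argument spelled out (vague convergence alone tests only compactly supported functions); this is routine but should not be waved through. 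In short: right strategy, matching the source the paper cites, but the decisive annealed estimates and the constant-matching are deferred rather than established.
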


{Theorem 6.2 in \cite{bib:Okura} has been proven for continuous profiles $W$, but its proof also applies to the local Kato-class case.}

Moreover, it follows from the Faber-Krahn isoperimetric inequality (see, e.g. \cite[Lemma 3.13]{bib:Don-Var} and \cite[Theorem 3.5]{bib:blp}) that when the process $X^{(\alpha)}$ is isotropic, then the infimum in
(\ref{eq:lambda}) is attained on the ball of radius $r_d=\omega_d^{-1/d}$ ($\omega_d$ is the volume of the unit ball)
and is equal to $\omega_d^{\alpha/d}\lambda_1^{(\alpha)}(B(0,1)).$

\smallskip

Theorem \ref{th:Okura} above states that {\bf (C)} and \eqref{eq:psi_inf} are sufficient conditions for the validity of \eqref{eq:IDS_upper}, which is the main assumption of Theorem \ref{thm:upper}. We now show that when {\bf (C)} holds, then also the quasi-scaling of principal eigenvalues needed in Theorem \ref{thm:lower_bound} holds true. The following proposition takes care of that.

\begin{proposition}\label{prop:conditionA}
Suppose that $X$ is a L\'{e}vy process such that condition {\bf (C)} is satisfied with certain $\alpha\in(0,2]$.
Then also (\ref{eq:lower-1}) holds true, with $\alpha$ the same as that in {\bf (C)} and {any $K>1$. More precisely, for any fixed $K >1$ one has
$$
\lambda_1^{\psi}(B(0,R))  \leq  K R^{-\alpha}\lambda^{{(\alpha)}}_1(B(0,1))+o(R^{-\alpha}),\quad R\to\infty.
$$
}
\end{proposition}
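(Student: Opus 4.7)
The natural strategy is to use the Rayleigh--Ritz variational characterization of the principal Dirichlet eigenvalue together with an appropriate test function built by rescaling a near-minimizer for the stable process on the unit ball. In Fourier terms,
\[
\lambda_1^{\psi}(B(0,R)) \;\leq\; \int_{\R^d} \psi(\xi)\,|\widehat{\phi_R}(\xi)|^{2}\,{\rm d}\xi
\]
for any $\phi_R \in C_c^{\infty}(B(0,R))$ with $\|\phi_R\|_{L^2}=1$, and the analogous formula with $\psi^{(\alpha)}$ characterizes $\lambda_1^{(\alpha)}(B(0,R))$. I would first fix $K>1$ and $\varepsilon>0$ small (to be specified), and choose $\phi \in C_c^{\infty}(B(0,1))$ with $\|\phi\|_{L^2}=1$ and $\cE^{(\alpha)}(\phi,\phi)\leq \lambda_1^{(\alpha)}(B(0,1))+\varepsilon$; such a $\phi$ exists because $C_c^{\infty}(B(0,1))$ is a core for the Dirichlet form of $X^{(\alpha)}$ restricted to $B(0,1)$. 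Rescaling $\phi_R(x):=R^{-d/2}\phi(x/R) \in C_c^{\infty}(B(0,R))$, one has $\widehat{\phi_R}(\xi)=R^{d/2}\widehat{\phi}(R\xi)$, and after the substitution $\eta=R\xi$,
\[
\int_{\R^d}\psi(\xi)\,|\widehat{\phi_R}(\xi)|^{2}\,{\rm d}\xi \;=\; \int_{\R^d}\psi(\eta/R)\,|\widehat{\phi}(\eta)|^{2}\,{\rm d}\eta.
\]

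The next step is to split this integral along $\{|\eta|\leq \delta R\}$ and $\{|\eta|>\delta R\}$, where $\delta>0$ is chosen so that the assumption \textbf{(C)} yields $\psi(\zeta) \leq (1+\varepsilon)\psi^{(\alpha)}(\zeta)$ for $|\zeta|\leq \delta$. Here one uses that $\inf_{|\xi|=1}\psi^{(\alpha)}(\xi)>0$ together with the $\alpha$-homogeneity of $\psi^{(\alpha)}$ to absorb the $o(|\xi|^{\alpha})$-error into a multiplicative $(1+\varepsilon)$ factor. On the ``low frequency'' part, by homogeneity $\psi^{(\alpha)}(\eta/R)=R^{-\alpha}\psi^{(\alpha)}(\eta)$, which gives
\[
\int_{|\eta|\leq \delta R}\psi(\eta/R)\,|\widehat{\phi}(\eta)|^{2}\,{\rm d}\eta \;\leq\; (1+\varepsilon)\,R^{-\alpha}\,\cE^{(\alpha)}(\phi,\phi) \;\leq\; (1+\varepsilon)\,R^{-\alpha}\bigl(\lambda_1^{(\alpha)}(B(0,1))+\varepsilon\bigr).
\]

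On the ``high frequency'' part, I would use the elementary bound $\psi(\zeta)\leq C(1+|\zeta|^{2})$, which follows directly from the L\'evy--Khintchine formula \eqref{eq:Lchexp} by splitting the jump integral according to $|z|\leq 1$ and $|z|>1$ and using $1-\cos(\zeta\cdot z)\leq \tfrac{1}{2}(\zeta\cdot z)^{2}$ and $1-\cos\leq 2$ respectively. Since $\phi\in C_c^{\infty}(\R^d)$, its Fourier transform $\widehat{\phi}$ is a Schwartz function, so
\[
\int_{|\eta|>\delta R}\bigl(1+|\eta|^{2}/R^{2}\bigr)\,|\widehat{\phi}(\eta)|^{2}\,{\rm d}\eta
\]
decays faster than any polynomial power of $R^{-1}$; in particular it is $o(R^{-\alpha})$. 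Combining the two estimates yields
\[
\lambda_1^{\psi}(B(0,R)) \;\leq\; (1+\varepsilon)\bigl(\lambda_1^{(\alpha)}(B(0,1))+\varepsilon\bigr)\,R^{-\alpha}+o(R^{-\alpha}),\qquad R\to\infty.
\]
Since $\lambda_1^{(\alpha)}(B(0,1))>0$ (by the nondegeneracy condition on $\psi^{(\alpha)}$) and $K>1$ is arbitrary, one may choose $\varepsilon$ sufficiently small so that $(1+\varepsilon)(\lambda_1^{(\alpha)}(B(0,1))+\varepsilon)\leq K\lambda_1^{(\alpha)}(B(0,1))$, giving the desired conclusion.

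\textbf{Main obstacle.} The only genuinely delicate point is the choice of the test function $\phi$: one needs a \emph{smooth, compactly supported} near-minimizer in $B(0,1)$, because the true principal eigenfunction $\phi_1^{(\alpha)}$ has only H\"older regularity $\alpha/2$ near $\partial B(0,1)$ and its Fourier transform does not decay fast enough to absorb the crude high-frequency bound $\psi(\xi)\lesssim 1+|\xi|^{2}$ into an $o(R^{-\alpha})$ remainder. This is circumvented by appealing to the core property of $C_c^{\infty}(B(0,1))$ for the Dirichlet form $\cE^{(\alpha)}$, so that the infimum in the Rayleigh quotient is attained in the limit along smooth compactly supported trial functions; everything else is a routine Plancherel computation.
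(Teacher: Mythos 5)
Your proof is correct, and it follows the same overall strategy as the paper --- a Rayleigh--Ritz bound with the rescaled test function $\phi_R(x)=R^{-d/2}\phi(x/R)$ and a Plancherel computation --- but the two technical ingredients are implemented differently. For the error coming from \textbf{(C)}, the paper keeps the exact identity $\mathcal{E}(\phi_R,\phi_R)=R^{-\alpha}\mathcal{E}^{(\alpha)}(\phi,\phi)+\int\overline{\psi}(\xi/R)|\widehat{\phi}(\xi)|^2\,{\rm d}\xi$ with $\overline{\psi}=\psi-\psi^{(\alpha)}$ and shows the second term is $o(R^{-\alpha})$ by dominated convergence, using the bound $|\overline{\psi}(\xi)|\le c(|\xi|^\alpha+|\xi|^2)$; you instead split frequencies at $|\eta|=\delta R$, absorb the error multiplicatively into a $(1+\varepsilon)$ factor on the low-frequency part via the nondegeneracy and homogeneity of $\psi^{(\alpha)}$, and pay only an $o(R^{-\alpha})$ tail thanks to the Schwartz decay of $\widehat{\phi}$ --- both treatments are routine and valid. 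For the test function, the paper mollifies the actual principal eigenfunction of $X^{(\alpha)}$ in $B(0,1)$, obtaining a $C_c^\infty$ function supported in the slightly larger ball $B(0,1+\delta)$, and compensates by working on balls of radius $(1+\delta)R$ before substituting $\widetilde{R}=(1+\delta)R$; you instead invoke the statement that $C_c^\infty(B(0,1))$ is a form core for the killed stable process on $B(0,1)$, so that the infimum of the Rayleigh quotient over smooth compactly supported trial functions already equals $\lambda_1^{(\alpha)}(B(0,1))$. That density claim is true (nondegeneracy gives $\psi^{(\alpha)}(\xi)\asymp|\xi|^\alpha$, so the form norm is equivalent to the $H^{\alpha/2}$ norm and the standard core-plus-mollification argument for the part form on an open set applies), but it is precisely the point the paper's mollification-and-ball-enlargement trick is designed to sidestep; to make your argument fully self-contained you should either supply that density argument (or a reference) or adopt the paper's construction of the near-minimizer.
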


\begin{proof} Suppose that {\bf (C)} is true; let {$\psi^{(\alpha)}$ be the L\'evy-Khinchine exponent of the symmetric $\alpha-$stable process $X^{(\alpha)}$} appearing in this condition. Denote $\overline{\psi}(\xi)=\psi(\xi)-\psi^{(\alpha)}(\xi),$ so that for fixed $R>0$
\[\psi\left(\frac{\xi}{R}\right)= \psi^{(\alpha)}\left(\frac{\xi}{R}\right)+ \overline{\psi}\left(\frac{\xi}{R}\right).\]
For any given $u\in C_c^\infty(\mathbb R^d)$ let $u_R(x)=R^{-d/2}u(\frac{x}{R}).$
Then
\begin{eqnarray}\label{eq:forms-scaling}
\mathcal E(u_R,u_R) &= &
 \int_{\mathbb R^d} \psi(\xi)|\widehat{u_R}(\xi)|^2\,d\xi = \int_{\mathbb R^d} R^{d}\psi(\xi)|\widehat{u}(R\xi)|^2\,{\rm d}\xi\nonumber
\\
&=& \int_{\mathbb R^d}
\psi\left(\frac{\xi}{R}\right)|\widehat{u}(\xi)|^2\,{\rm d}\xi
\nonumber\\
&=&
\int_{\mathbb R^d}
\psi^{(\alpha)}\left(\frac{\xi}{R}\right)|\widehat{u}(\xi)|^2\,{\rm d}\xi+\int_{\mathbb R^d}\overline{\psi}\left(\frac{\xi}{R}\right)|\widehat u(\xi)|^2\,{\rm d}\xi\nonumber\\
&=&\mathcal E^{(\alpha)}(u_R,u_R)+\int_{\mathbb R^d}\overline{\psi}\left(\frac{\xi}{R}\right)|\widehat u(\xi)|^2\,{\rm d}\xi\nonumber\\
&=& R^{-\alpha}\mathcal E^{(\alpha)}(u,u)+ \int_{\mathbb R^d}\overline{\psi}\left(\frac{\xi}{R}\right)|\widehat u(\xi)|^2\,{\rm d}\xi.
\end{eqnarray}

We have assumed that $\overline{\psi}(\xi)=o(|\xi|^\alpha)$ when $|\xi|\to 0.$ Therefore, since both
${0 \leq} \psi(\xi),\psi^{(\alpha)}(\xi)\leq c|\xi|^2$ when $|\xi|>1,$ we get
 that there is $c_1>0$ for which
\[{|\overline{\psi}(\xi)|}\leq c_1(|\xi|^\alpha+|\xi|^2),\quad \xi\in\mathbb R^d,\]
so that
\[{\left|R^{\alpha}\overline{\psi}\left(\frac{\xi}{R}\right)\right|}\leq c_1 (|\xi|^\alpha+|\xi|^2),\quad \xi\in\mathbb R^d,\; R\geq 1.\]
{Moreover, $R^{\alpha} \overline{\psi}(\xi/R)\to 0$ as $R\to\infty$,
for every fixed $\xi \in\mathbb R^d$.}
Since {$u \in C_c^\infty(\mathbb R^d)$}, the integral $\int_{\mathbb R^d}|\xi|^2|\widehat u(\xi)|^2{\rm d}\xi$ is finite, and from the dominated convergence theorem we obtain that
\begin{equation}\label{eq:ac}
\lim_{R\to\infty} R^{\alpha}\int_{\mathbb R^d} \overline{\psi}\left(\frac{\xi}{R}\right)|\widehat{u}(\xi)|^2{\rm d}\xi =  0.
\end{equation}

{Let now $\phi^{(\alpha)}$ be the first eigenfunction of the generator of the process $X^{(\alpha)}$ killed outside $B(0,1)$. Clearly, $\phi^{(\alpha)} \in \mathcal D(\mathcal E^{(\alpha)})$ and $\lambda_1^{(\alpha)}(B(0,1)) = \mathcal E^{(\alpha)}(\phi^{(\alpha)},\phi^{(\alpha)})$. Also, let $\left\{\varphi_{\delta}\right\}_{\delta>0} \subset C_c^\infty(\mathbb R^d)$ be a family of mollifiers such that $\supp \varphi_{\delta} \subseteq B(0,\delta)$. Denote $\phi^{(\alpha)}_{\delta} = (\phi^{(\alpha)} * \varphi_{\delta})/\left\|\phi^{(\alpha)} * \varphi_{\delta}\right\|_{2}$. Then, for every $\delta>0$, $\phi^{(\alpha)}_{\delta} \subset C_c^\infty(\mathbb R^d)$, $||\phi^{(\alpha)}_{\delta}||_2 = 1$ and $\supp \phi^{(\alpha)}_{\delta} \subseteq B(0,1+\delta)$. Moreover, $\mathcal E^{(\alpha)}(\phi^{(\alpha)}_{\delta},\phi^{(\alpha)}_{\delta}) \to \mathcal E^{(\alpha)}(\phi^{(\alpha)},\phi^{(\alpha)}) = \lambda_1^{(\alpha)}(B(0,1))$ as $\delta \to 0^{+}$.
}

{Suppose that $K>1$ is fixed, and choose $\delta >0$ small enough so that
\[ \mathcal E^{(\alpha)}(\phi^{(\alpha)}_{\delta},\phi^{(\alpha)}_{\delta}) \leq K \, (1+\delta)^{-\alpha} \lambda_1^{(\alpha)}(B(0,1)). \]
Then, from \eqref{eq:forms-scaling} it follows that for every $R >1$
\begin{eqnarray}
\lambda^\psi_1(B(0,(1+\delta)R)) \leq
\mathcal E((\phi^{(\alpha)}_{\delta})_R,(\phi^{(\alpha)}_{\delta})_R) &=& \frac{1}{R^{\alpha}}\,\mathcal E^{(\alpha)}(\phi^{(\alpha)}_{\delta},\phi^{(\alpha)}_{\delta})+
\int_{\mathbb R^d}\overline{\psi}\left(\frac{\xi}{R}\right)|\widehat{\phi^{(\alpha)}_{\delta}}(\xi)|^2{\rm d}\xi
\nonumber\\
&\leq & \frac{K}{((1+\delta)R)^\alpha} \,\lambda_1^{(\alpha)}(B(0,1)) + \int_{\mathbb R^d}\overline{\psi}\left(\frac{\xi}{R}\right)|\widehat{\phi^{(\alpha)}_{\delta}}(\xi)|^2{\rm d}\xi \nonumber
\end{eqnarray}
(the first inequality follows by the standard variational formula for the principal eigenvalue). Finally, by substituting $\widetilde R = (1+\delta) R$, we get
\begin{align}\label{eq:ad}
\lambda^\psi_1(B(0,\widetilde R)) \leq
K \, \widetilde R^{-\alpha} \,\lambda_1^{(\alpha)}(B(0,1)) + \int_{\mathbb R^d}\overline{\psi}\left(\frac{(1+\delta)\xi}{\widetilde R}\right)|\widehat{\phi^{(\alpha)}_{\delta}}(\xi)|^2{\rm d}\xi.
\end{align}
}
The statement follows now from
(\ref{eq:ac}) and (\ref{eq:ad}).
\end{proof}

We now provide some reasonable and easy-to-check sufficient conditions under which the basic asymptotic assumption {\bf (C)}  holds true.

\begin{proposition}\label{prop:suff_for_Okura}
Let $X$ be a L\'{e}vy process determined by the L\'evy-Khintchine exponent $\psi$ as in \eqref{eq:Lchexp} with  Gaussian coefficient $A=(a_{ij})_{1 \leq i,j \leq d}$ and  L\'evy measure $\nu$. The following hold.
\begin{itemize}
\item[(i)] If  $\nu$ has second moment finite, i.e.
$$
\int_{\mathbb R^d} |z|^2\,\nu({\rm d}z)<\infty,
$$
then
$$
\psi(\xi)=\xi \cdot \widetilde A \xi + o(|\xi|^2) \quad \text{as} \ \ \quad |\xi| \to 0,
$$
where $\widetilde A = (\widetilde a_{ij})_{1 \leq i, j \leq d}$ with $\widetilde a_{ij} = a_{ij} + \frac{1}{2}\int_{\mathbb R^d}y_iy_j\nu({\rm d}y)$.
\item[(ii)] If there exist $\alpha \in (0,2)$ and a symmetric finite measure $n$ on the unit sphere $S^{d-1}$ such that
$$
r^{2} \int_{1 < |z| \leq 1/r} |z|^2 |\nu - \nu^{(\alpha)}|({\rm d}z)= o(r^\alpha) \quad \text{and} \quad  |\nu - \nu^{(\alpha)}|\big(B(0,1/r)^c\big)= o(r^\alpha)  \quad \text{as \ $r \to 0$},
$$
with $\nu^{(\alpha)}({\rm d}r {\rm d}\varphi) = n({\rm d} \varphi)r^{-1-\alpha} {\rm d}r$, then
$$
\psi(\xi)= \psi^{(\alpha)}(\xi)+o(|\xi|^{\alpha}) \quad \text{as} \ \ \quad |\xi| \to 0,
$$
where $\psi^{(\alpha)}(\xi) = \int_0^\infty\int_{S^{d-1}} (1-\cos(\xi\cdot r\varphi))\,n({\rm d}\varphi)\frac{{\rm d}r}{r^{\alpha+1}}$.
\end{itemize}
\end{proposition}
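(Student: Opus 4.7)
My approach is to attack each part by direct manipulation of the L\'evy-Khintchine exponent \eqref{eq:Lchexp}, using the dominated convergence theorem in case (i) and a three-region split of the integral in case (ii).

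For part (i), I would first rewrite the candidate expression by noting
\[\xi \cdot \widetilde A \xi = \xi \cdot A \xi + \frac{1}{2}\int_{\R^d \setminus \{0\}} (\xi \cdot z)^2 \, \nu({\rm d}z),\]
so the claim reduces to showing that
\[\int_{\R^d \setminus \{0\}}\left[(1 - \cos(\xi \cdot z)) - \tfrac{1}{2}(\xi \cdot z)^2\right]\,\nu({\rm d}z) = o(|\xi|^2) \quad \text{as} \ \ |\xi| \to 0.\]
After dividing the integrand by $|\xi|^2$ to form $f_\xi(z) := [(1 - \cos(\xi \cdot z)) - (\xi \cdot z)^2/2]/|\xi|^2$, two elementary facts are decisive: the bound $0 \leq 1 - \cos t \leq t^2/2$ yields the uniform domination $|f_\xi(z)| \leq |z|^2/2$, while Taylor's theorem with remainder gives $|1 - \cos t - t^2/2| \leq t^4/24$ and hence the pointwise decay $|f_\xi(z)| \leq |\xi|^2|z|^4/24 \to 0$ for each fixed $z$. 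Since $|z|^2 \in L^1(\nu)$ by hypothesis, the dominated convergence theorem delivers the required $o(|\xi|^2)$.

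For part (ii), the Gaussian contribution $\xi \cdot A\xi = O(|\xi|^2)$ is automatically $o(|\xi|^\alpha)$ because $\alpha < 2$, so it suffices to bound
\[J(\xi) := \int_{\R^d \setminus \{0\}}(1 - \cos(\xi \cdot z))\,(\nu - \nu^{(\alpha)})({\rm d}z).\]
The natural strategy is to split the domain into the three regions prescribed by the hypotheses, $|z| \leq 1$, $1 < |z| \leq 1/|\xi|$, and $|z| > 1/|\xi|$, and to dominate by the total variation measure $|\nu - \nu^{(\alpha)}|$. On the first two regions I would use $1 - \cos(\xi \cdot z) \leq \tfrac{1}{2}|\xi|^2|z|^2$, and on the third the trivial bound $1 - \cos(\xi \cdot z) \leq 2$. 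This yields
\[|J(\xi)| \leq \frac{|\xi|^2}{2}\int_{|z|\leq 1}|z|^2\,|\nu - \nu^{(\alpha)}|({\rm d}z) + \frac{|\xi|^2}{2}\int_{1 < |z| \leq 1/|\xi|}|z|^2\,|\nu - \nu^{(\alpha)}|({\rm d}z) + 2\,|\nu - \nu^{(\alpha)}|\!\left(B(0,1/|\xi|)^c\right),\]
in which the first summand is $O(|\xi|^2) = o(|\xi|^\alpha)$ because both $\nu$ and $\nu^{(\alpha)}$ are L\'evy measures (hence locally square-integrable near the origin), while the second and third summands are $o(|\xi|^\alpha)$ by the two standing hypotheses applied with $r = |\xi|$.

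The main obstacles are mild. In part (i), the only point to watch is that $|z|^2/2$ genuinely dominates $|f_\xi(z)|$ on all of $\R^d \setminus \{0\}$ rather than merely near the origin as is customary in L\'evy computations, which is precisely where the finite-second-moment hypothesis enters. In part (ii), the three-piece split is tailored to the two assumptions, and the only extra check is that the $|z| \leq 1$ contribution -- not explicitly controlled by the hypotheses -- is harmless thanks to the local square-integrability near zero of both L\'evy measures.
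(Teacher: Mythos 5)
Your proposal is correct and follows essentially the same route as the paper: part (i) is a dominated convergence argument with the second moment of $\nu$ as dominating function (the paper uses the integral-remainder form of Taylor's formula rather than the error bound $|1-\cos t - t^2/2|\le t^4/24$, which is only a cosmetic difference), and part (ii) uses the same decomposition $\psi = \psi^{(\alpha)} + \xi\cdot A\xi + \int(1-\cos(\xi\cdot y))\,(\nu-\nu^{(\alpha)})({\rm d}y)$ with the total-variation bound split according to the two hypotheses. Your explicit treatment of the $|z|\le 1$ region via local square-integrability of both L\'evy measures is a small point the paper leaves implicit, but the argument is the same.
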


\begin{proof}
Knowing that $\psi(\xi)= \xi\cdot A\xi +
\int_{\mathbb R^d} (1-\cos(\xi\cdot y))\,\nu({\rm d}y),$ and writing down the Taylor expansion of the function {$\cos s$ at $0$} we get
\[\psi(\xi)=\xi\cdot A\xi+\int_{\mathbb R^d} (\xi\cdot y)^2 \left[\int_0^1(1-t)\cos(t\xi\cdot y)\,{\rm d}t\right]\nu({\rm d}y).\]
The first assertion follows from the dominated convergence theorem together with the finiteness of the second moment of $\nu.$

To prove the second assertion, we write
$$
\psi(\xi) = \psi^{(\alpha)}(\xi) + \xi \cdot A \xi + \int_{\mathbb R^d} (1-\cos(\xi \cdot y))\,(\nu - \nu^{(\alpha)})({\rm d} y).
$$
Since $0 \leq \xi \cdot A \xi \leq \left\|A\right\||\xi|^2$, we only need to show that the last member above is of order $o(|\xi|^{\alpha})$. We have
\begin{align*}
\left|\int_{\mathbb R^d} (1-\cos(\xi \cdot y))\,(\nu - \nu^{(\alpha)})({\rm d} y)\right| &
\leq \frac{1}{2} \int_{|y| \leq 1/|\xi|} (|\xi||y|)^2 |\nu - \nu^{(\alpha)}|({\rm d}y) + 2 \int_{|y| > 1/|\xi|} |\nu - \nu^{(\alpha)}|({\rm d}y),
\end{align*}
and the statement follows from the assumption.
\end{proof}

In what follows we will {often} use the following notation. If $X=(X_t)_{t \geq 0}$ is a symmetric L\'evy process with characteristic exponent $\psi$ as in \eqref{eq:Lchexp}, then we
write
$$
X = X^A + X^{\nu} \quad \text{and} \quad \psi(\xi)=\psi_A(\xi)+\psi_\nu(\xi),
$$
where $X^A=(X^A_t)_{t \geq 0}$ is the Gaussian part determined by the L\'evy-Khintchine exponent $\psi_A(\xi)=\xi\cdot A\xi$, and $X^{\nu}=(X^{\nu}_t)_{t \geq 0}$ is the jump part with the exponent $\psi_\nu(\xi)=\int_{\R^d\setminus \{0\}} (1-\cos(\xi \cdot z)) \nu({\rm d}z)$.

{The following fact on the tails of jump L\'evy processes with nondegenerate Gaussian component will also be needed below.} It states that one can add a sufficiently regular diffusion process to a purely jump L\'evy process without spoiling the assumption $\textbf{(U)}$.

\begin{proposition}\label{prop:gauss_est}
Let $X$ be a L\'{e}vy process determined by the L\'evy-Khintchine exponent $\psi$ as in \eqref{eq:Lchexp} with Gaussian coefficient $A=(a_{ij})_{1 \leq i,j \leq d}$ and  L\'evy measure $\nu$. Moreover, suppose that $\inf_{|\xi|=1} \xi \cdot A \xi > 0.$ If
the process $X^{\nu}$ satisfies the assumption $\textbf{(U)}$ with $\gamma > 0$, profile  $F$ and constants {${C_2}, r_0, t_1$,} then the entire process $X$ also satisfies a version of $\textbf{(U)}$. More precisely, there are constants ${\widetilde C_2 \geq C_2}$ and {$C_4\in (0,1]$} such that
$${
\pr_0(|X_t| \geq r) \leq \widetilde C_2\, t^{\gamma} \left(F(C_4r) \vee e^{-C_4r}\right), \qquad  r \geq 2(r_0 \vee 2t), \ \ t \geq t_1.}
$$
In particular, if $F(C_4r) \geq e^{-C_4 r}$ for $r \geq 2r_0$, then $X$ satisfies the assumption $\textbf{(U)}$ with $\widetilde C_2$, the same $\gamma$ and the profile $\widetilde F(r) = F(C_4r)$. If $F(C_4r) < e^{-C_4 r}$ for $r \geq 2r_0$, then the same is true with $\widetilde C_2$, $\gamma$ and $\widetilde F(r) = e^{-C_4r}$.
\end{proposition}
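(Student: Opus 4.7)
The plan is to split the process into its independent Gaussian and jump parts via the L\'evy--It\^o decomposition, estimate each tail separately, and combine them with a crude union bound. Write $X_t = X^A_t + X^{\nu}_t$, where by L\'evy--It\^o the processes $X^A$ and $X^{\nu}$ are \emph{independent} L\'evy processes whose L\'evy--Khintchine exponents are $\psi_A$ and $\psi_{\nu}$ respectively. The triangle inequality and a union bound yield
\[
 \pr_0(|X_t| \geq r) \leq \pr_0(|X^A_t| \geq r/2) + \pr_0(|X^{\nu}_t| \geq r/2).
\]
For the jump component, I would simply invoke assumption $\textbf{(U)}$ applied at radius $r/2$: as soon as $r/2 \geq r_0 \vee C_3 t$ and $t \geq t_1$, we obtain $\pr_0(|X^{\nu}_t| \geq r/2) \leq C_2 \, t^{\gamma} ( F(r/2) \vee e^{-r/2} )$.

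For the Gaussian component, the nondegeneracy $a_* := \inf_{|\xi|=1} \xi \cdot A \xi > 0$ means $X^A_t$ is a centered Gaussian vector with covariance $2tA$, so that $X^A_t \stackrel{d}{=} \sqrt{2t}\,A^{1/2}Z$ for a standard Gaussian $Z$ in $\R^d$. Writing $a^* := \sup_{|\xi|=1} \xi \cdot A\xi$, the bound $|X^A_t| \leq \sqrt{2t a^*}\,|Z|$ together with the standard Gaussian tail estimate (via the exponential Markov inequality applied to $\lambda|Z|$) produces positive constants $c_1, c_2$, depending only on $A$ and $d$, such that
\[
 \pr_0(|X^A_t| \geq s) \leq c_1 \, e^{-c_2 s^2/t}, \qquad s,t > 0.
\]
Substituting $s = r/2$ and invoking $r \geq 4t$ (which is implied by the range of $r$ in the statement up to a harmless choice of constant), we have $s^2/t \geq 2s = r$, hence $\pr_0(|X^A_t| \geq r/2) \leq c_1 e^{-c_2 r}$. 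Since $t \geq t_1 \geq 1$ gives $t^{\gamma} \geq 1$, this upper bound is also bounded above by $c_1 t^{\gamma} e^{-c_2 r}$.

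Combining the two estimates and setting $C_4 := \min(1/2, c_2) \in (0,1]$ and $\widetilde C_2 := c_1 + C_2$, monotonicity of $F$ gives $F(r/2) \leq F(C_4 r)$, and clearly $e^{-r/2} \vee e^{-c_2 r} \leq e^{-C_4 r}$, whence
\[
 \pr_0(|X_t| \geq r) \leq \widetilde C_2 \, t^{\gamma} \bigl( F(C_4 r) \vee e^{-C_4 r} \bigr),
\]
which is the target estimate. The two ``in particular'' assertions then follow immediately: in each regime the maximum collapses to a single dominant term, which one takes as the new profile $\widetilde F$ in condition $\textbf{(U)}$, with the other parameters trivially inherited.

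The main obstacle here is not conceptual but only bookkeeping, namely ensuring that the Gaussian contribution is forced to decay at least as fast as $e^{-C_4 r}$ rather than merely $e^{-c r^2/t}$: this is exactly what forces the range $r \gtrsim t$ and is the reason the statement assumes $r \geq 2(r_0 \vee 2t)$ rather than the weaker threshold $r \geq r_0 \vee C_3 t$ appearing in the purely-jump assumption for $X^{\nu}$.
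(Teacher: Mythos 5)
Your proposal is correct and follows essentially the same route as the paper: split $X=X^A+X^{\nu}$, apply \textbf{(U)} to the jump part at radius $r/2$, bound the nondegenerate Gaussian part by $c\,e^{-c' r}$ using $r\geq 4t$, and absorb both terms into one profile via a constant $C_4$. The only cosmetic difference is that you use the exact Gaussian law of $X^A_t$ with a standard tail bound, whereas the paper integrates the Gaussian upper estimate for the heat kernel of $X^A$ over $B(0,r/2)^c$ -- the same estimate in substance.
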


\begin{proof}
For $t>t_1$ and $r \geq 4t$, we may write
 \begin{eqnarray*}
 \pr_0(|X_t|\geq r)&=&  \pr_0\left(|X_t^\nu+X_t^A|\geq r\right)\\
 &\leq & \pr_0\left(|X_t^\nu|\geq\frac{r}{2}\right) + \pr_0\left(|X_t^A|\geq\frac{r}{2}\right).
 \end{eqnarray*}
 The first part is estimated using {\bf (U)}. To take care of the Gaussian part,
note that under the assumption $\inf_{|\xi|=1} \xi \cdot A \xi > 0$ the transition densities $p_A(t,x,y)$ of the corresponding diffusion process $X^A$ exist and enjoy the Gaussian upper estimates:
\[p_A(t,x,y)\leq \frac{c_{1}}{t^{d/2}}{\rm e}^{-c_{2} \frac{|x-y|^2}{t}}.\]
In particular, taking into account that $r>4t\geq 4t_1$,
 \begin{eqnarray*}
 \pr_0\left(|X_t^A|\geq\frac{r}{2}\right) &= & \int_{B(0,r/2)^c} p_A(t,0,y){\rm d}y \leq \int_{B(0,r/2)^c}\frac{c_1}{t^{d/2}} \,{\rm e}^{-c_2\frac{|y|^2}{t}}\,{\rm d}y
 \leq c_3 {\rm e}^{-c_2r},
 \end{eqnarray*}
so that
\begin{align*}
\pr_0(|X_t| \geq r)
			 & \leq  C_2 \, t^{\gamma} \left(F(r/2) \vee e^{-r/2} \right) + c_3 e^{-c_2 r} \\
			 & \leq c_4 \, t^{\gamma} \left(F(c_5 r) \vee e^{-c_5 r} \right).
			\end{align*}
for some positive constants $c_1-c_5$. The proof is complete.
\end{proof}

\smallskip

In the sequel, we will also need the following general lower estimate for the function $G$ defined in \eqref{eq:G-def}.
Recall that for every $R>0$ we have denoted $U_R=(-R,R)^d.$ Below we will also write $B(R)$ for $B(0,R)$.

\begin{proposition}\label{prop:dirichlet-by-nu}
Let $X$ be a L\'{e}vy process with L\'evy-Khinchine exponent $\psi$ given by
\eqref{eq:Lchexp} {and such that $e^{-t\psi(\cdot)} \in L^1(\R^d)$, $t>0$}. Suppose there exist {$C_5 > 0$} and $r_0 > 0$ such that for every Borel set $E \subset \R^d$
\begin{align} \label{eq:ass1}
\nu \big(E \cap \left\{x:0<|x|\leq 2r_0\right\}\big) \geq {C_5} |E \cap \left\{x:0<|x|\leq 2r_0\right\}|.
\end{align}
Then for every $t>0,$  $R > 4r_0,$  and $2r_0 < |y| < \frac{R}{2}$  one has
\begin{equation}\label{eq:dirichlet-by-f}
p^{U_R}(t,0,y)\geq \eta(t,r_0) \inf_{|z|\leq\frac{R}{2}+r_0}
\nu(B(z,r_0)).
\end{equation}
where
\begin{eqnarray}\label{eq:def_eta}
\eta(t,r)&=& \int_0^{{t}} \mathbf P_0(\tau_{B(r)}>s)\,{\Lambda}(t-s,r)\min\left(1, {\Lambda}(t-s,r)\big|B\big(\frac{{r}}{8}\big)\big|\right)\,{\rm d}s.
\end{eqnarray}
with
\[{\Lambda}(t,r)={C_5} \int_0^{t/2} \mathbf P_0(\tau_{B(r/8)}>u)\mathbf P_0(\tau_{B({r}/8)}>t/2-u)\,{\rm d}u.\]
In particular,
\begin{equation}\label{eq:G-by-nu}
G(4r_0,R)\geq \eta(1,r_0) \inf_{|y|\leq\frac{R}{2}+r_0}\nu(B(y,r_0)).
\end{equation}
\end{proposition}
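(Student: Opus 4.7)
The strategy is to construct a specific class of ``good paths'' from $0$ to $y$ and bound their contribution to $p^{U_R}(t,0,y)$ via the L\'evy-system (Ikeda--Watanabe) identity. A good path stays in $B(0,r_0)$ for time $s$, executes one single large jump of size comparable to $|y|$ landing in $B(y,r_0)$, and then concentrates near $y$ over the remaining time $t-s$. The three factors in $\eta(t,r_0)$ correspond exactly to these three phases: the survival probability $\pr_0(\tau_{B(r_0)}>s)$ handles the initial stay, the infimum of $\nu(B(\cdot,r_0))$ is the cost of the single large jump (whose distance is bounded by $R/2+r_0$), while the small-ball quantity $\Lambda(t-s,r_0)\min(1,\Lambda(t-s,r_0)|B(r_0/8)|)$ encodes a second-level density estimate near $y$ that exploits the bound $\nu\ge C_5$ on $B(0,2r_0)$.

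First I would apply the L\'evy-system identity at the first exit from $B(0,r_0)\Subset U_R$:
\[
p^{U_R}(t,0,y)\;\ge\;\int_0^t\!\!\int_{B(0,r_0)}\!p^{B(0,r_0)}(s,0,w)\!\int_{U_R\setminus B(0,r_0)}\!p^{U_R}(t-s,v,y)\,\nu(dv-w)\,dw\,ds.
\]
Since $2r_0<|y|<R/2$ forces $B(y,r_0)\subset U_R\setminus B(0,r_0)$, restricting $v\in B(y,r_0)$ is legitimate; then $\int_{B(y,r_0)}\nu(dv-w)=\nu(B(y-w,r_0))\ge\inf_{|z|\le R/2+r_0}\nu(B(z,r_0))$ uniformly in $w\in B(0,r_0)$ (because $|y-w|\le R/2+r_0$). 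Pulling this infimum together with $\inf_{v\in B(y,r_0)}p^{U_R}(t-s,v,y)$ out of the integral, and evaluating $\int_{B(0,r_0)}p^{B(0,r_0)}(s,0,w)\,dw=\pr_0(\tau_{B(r_0)}>s)$, the problem reduces to the uniform lower bound
\begin{equation*}
p^{U_R}(\tau,v,y)\;\ge\;\Lambda(\tau,r_0)\,\min\!\bigl(1,\,\Lambda(\tau,r_0)|B(r_0/8)|\bigr)\qquad\text{for }v\in B(y,r_0),\ \tau>0.
\end{equation*}

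This auxiliary bound I would prove in two steps. \emph{Step (i):} a second Ikeda--Watanabe application with the much smaller ball $B(v',r_0/8)$ in place of $B(0,r_0)$. Whenever two such small balls around auxiliary points $v',u$ satisfy $|v'-u|\in[r_0/4,7r_0/4]$, they are disjoint and every one-jump distance $|w'-w|$ lies in $(0,2r_0]$, so assumption~\eqref{eq:ass1} supplies the density bound $\nu(\cdot)\ge C_5$; the time-convolution of the two resulting survival probabilities $\pr_0(\tau_{B(r_0/8)}>\cdot)$ is then precisely $\Lambda(\tau,r_0)$, yielding $p^{U_R}(\tau/2,v',u)\ge \Lambda(\tau,r_0)$. \emph{Step (ii):} by Chapman--Kolmogorov
\[
p^{U_R}(\tau,v,y)\;\ge\;\int_E p^{U_R}(\tau/2,v,u)\,p^{U_R}(\tau/2,u,y)\,du
\]
over a carefully chosen $E\subset B(y,r_0/8)$ on which the geometric constraints of step (i) apply to both factors simultaneously. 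The first factor contributes $\Lambda(\tau,r_0)$, while the remaining integral is bounded below by $\min(1,\Lambda(\tau,r_0)|B(r_0/8)|)$; the minimum captures the tension between the pointwise one-jump bound $p^{U_R}(\tau/2,u,y)\ge\Lambda(\tau,r_0)$ (which formally integrates to $\Lambda(\tau,r_0)|B(r_0/8)|$) and the trivial probabilistic cap $\int_{B(y,r_0/8)}p^{U_R}(\tau/2,u,y)\,du\le 1$.

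The main obstacle is the geometric bookkeeping in step~(ii): one must arrange the intermediate points $u$ with $|v-u|,|u-y|\in[r_0/4,7r_0/4]$ simultaneously for all $v\in B(y,r_0)$, including both $|v-y|$ small and close to $r_0$, and identify a sub-region $E$ of $d$-volume comparable to $|B(r_0/8)|$ to match the stated constant. The final assertion~\eqref{eq:G-by-nu} is then an immediate consequence of~\eqref{eq:dirichlet-by-f} taken at $t=1$ and infimized over the annulus $2r_0\le|y|\le R/2$ which defines $G(4r_0,R)$.
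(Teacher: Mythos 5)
Your plan is correct and is essentially the paper's own argument: a first Ikeda--Watanabe application at the exit from $B(0,r_0)$ extracts $\pr_0(\tau_{B(r_0)}>s)$ and $\inf_{|z|\le R/2+r_0}\nu(B(z,r_0))$, and the near-$y$ factor $\Lambda\min\big(1,\Lambda|B(r_0/8)|\big)$ is produced by small-ball Ikeda--Watanabe bounds using the density bound $C_5$ on $\{0<|x|\le 2r_0\}$ combined with Chapman--Kolmogorov; the paper merely organizes this last step as a two-case analysis in the distance of the landing point from $y$ (a single application with $r_0/4$-balls in the outer annulus, Chapman--Kolmogorov through the annulus for the central region), which is exactly where its $\min$ arises. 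The geometric step you flag does close: taking $E=B\big(y+\tfrac{5r_0}{8}e,\tfrac{r_0}{8}\big)$ with $e$ the unit vector pointing from $v$ toward $y$ gives $|u-y|\in[\tfrac{r_0}{2},\tfrac{3r_0}{4}]$ and $|u-v|\in[\tfrac{r_0}{2},\tfrac{7r_0}{4}]$ simultaneously for all $u\in E$ and all $v\in B(y,r_0)$, so your uniform bound $\Lambda^2|B(r_0/8)|\ge\Lambda\min\big(1,\Lambda|B(r_0/8)|\big)$ follows and yields the stated $\eta(t,r_0)$.
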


\begin{proof}
Let $r_0> 0$  be as in the assumptions and let $R > 4r_0$ be fixed. One can check by using the strong Markov property that for any $2r_0 < |y| <R/2$ and any $t>0$
\begin{equation}\label{eq:G_1}
p^{U_R}(t,0,y)= \mathbf E_0[\tau_{B(r_0)}<t; p^{U_R}(t-\tau_{B(r_0)}, X_{\tau_{B(r_0)}},y)].
\end{equation}
Set $\nu(x,\cdot) := \nu(\cdot - x)$. Using \eqref{eq:G_1} and the Ikeda-Watanabe formula \cite[Theorem 1]{bib:IW}, we can write for such $y$ and $t$
\begin{eqnarray*}
p^{U_R}(t,0,y) & \geq & \mathbf E_0\left[X_{\tau_{B(r_0)}}\in B(y,r_0), \tau_{B(r_0)}< t;
p^{U_R}(t-\tau_{B(r_0)}, X_{\tau_{B(r_0)}}, y)\right]\\
&=& \int_0^{t} \int _{B(r_0)}p^{B(r_0)}(s,0,z)\int_{B(y,r_0)}p^{U_R}(t-s,w,y)\nu(z,{\rm d}w)\,{\rm d}z\,{\rm d}s\\
&\geq & \int_0^{t}\int_{B(r_0)} p^{B(r_0)}(s,0,z)\inf_{{w \in B(y,r_0)}} p^{{B(y,2r_0)}}(t-s,{y},{w})\nu(z, B(y,r_0))\, {\rm d} z\,{\rm d}s
\\
&{\geq}& \left[\int_0^t \mathbf P^0\left(\tau_{B(r_0)}>s\right) \, \inf_{|x|\leq r_0}p^{B(2r_0)}(t-s,0,x)\,{\rm d}s \right] \inf_{|z|\leq\frac{R}{2}+r_0}
\nu(B(z,r_0)).
\end{eqnarray*}
To complete the proof, we need to estimate the kernel $p^{B(2r_0)}(t',0,x)$ for every $t' \in (0,t]$ and $|x| \leq r_0$. Let first $r_0/2 < |x| \leq r_0$.
By following through with  the argument above and  using \eqref{eq:ass1}, we have
\begin{eqnarray} \label{eq:G_2}
p^{B(2r_0)}(t',0,x) & \geq & \mathbf E_0\left[X_{\tau_{B(r_0/4)}}\in B(x,r_0/4), \tau_{B(r_0/4)}< t';
p^{B(2r_0)}(t'-\tau_{B(r_0/4)}, X_{\tau_{B(r_0/4)}}, x)\right] \nonumber \\
&=& \int_0^{t'} \int _{B(r_0/4)}p^{B(r_0/4)}(u,0,z)\int_{B(x,r_0/4)}p^{B(2r_0)}(t'-u,w,x)\nu(z,{\rm d}w)\,{\rm d}z\,{\rm d}u \nonumber\\
&\geq & {C_5} \int_0^{t'} \pr_0\left(\tau_{B(r_0/4)}>u\right) \, \int_{B(x,r_0/4)} p^{B(x,r_0/4)}(t'-u,x,w) {\rm d}w \,{\rm d}u\\
& = & {C_5}\int_0^{t'} \pr_0\left(\tau_{B({r_0}/4)}>u\right) \, \pr_0\left(\tau_{B(r_0/4)}>t'-u\right) \,{\rm d}u . \nonumber
\end{eqnarray}
In the case when $|x| \leq r_0/2$, use first the Chapman-Kolmogorov identity and then \eqref{eq:G_2} to get
\begin{align*}
& p^{B(2r_0)} (t',0,x) \geq \int_{r_0/2 < |z| \leq r_0} p^{B(2r_0)}(t'/2,0,z) p^{B(2r_0)}(t'/2,z,x) \,{\rm d}z \\
& \geq  {C_5} \left[\int_0^{t'/2} \pr_0\left(\tau_{B(r_0/4)}>u\right) \, \pr_0\left(\tau_{B(r_0/4)}>t'/2-u\right) \,{\rm d}u \right]\,
\int_{\frac{r_0}{2} < |z| \leq r_0 \atop \frac{3r_0}{8} < |x-z| \leq \frac{3r_0}{4} } p^{B\big(x,\frac{3r_0}{2}\big)}(t'/2,x,z) \,{\rm d}z \\
& \geq  {C_5}  \left[\int_0^{t'/2} \pr_0\left(\tau_{B(r_0/4)}>u\right) \, \pr_0\left(\tau_{B(r_0/4)}>t'/2-u\right) \,{\rm d}u \right]
\left|B\left(\frac{r_0}{8}\right)\right| \inf_{\frac{3r_0}{8} < |z| \leq \frac{3r_0}{4}}p^{B\big(\frac{3r_0}{2}\big)}(t'/2,0,z).
\end{align*}
{Here we have used the fact that the set $\left(\overline B(r_0)\setminus \overline B(r_0/2)\right)\cap\left(\overline B(x,3r_0/4)\setminus \overline B(x, 3r_0/8)\right)$ always contains a ball of radius $r_0/8$ (in the last line we first restricted the integration to this ball and then we took the infimum).}
Observe that the last infimum can be estimated exactly in the same way as in \eqref{eq:G_2}. We thus have
\begin{align*}
p^{B\big(\frac{3r_0}{2}\big)}(t'/2,0,z) &
\geq \ex_0\left[X_{\tau_{B(r_0/8)}}\in B(z,r_0/8), \tau_{B(r_0/4)}< t'/2;
p^{B\big(\frac{3r_0}{2}\big)}(t'/2-\tau_{B(r_0/8)}, X_{\tau_{B(r_0/8)}}, z)\right] \\
& \geq{C_5} \int_0^{t'/2} \pr_0\left(\tau_{B(r_0/8)}>u\right) \, \pr_0\left(\tau_{B(r_0/8)}>t'/2-u\right) \,{\rm d}u,
\end{align*}
as long as $\frac{3r_0}{8} < |z| \leq \frac{3r_0}{4}$. In consequence, for $0<s<t$ we have
\begin{eqnarray*}
\inf_{|x| \leq r_0}  p^{B(2r_0)}(t-s,0,x) & \geq &
{\Lambda}(1\wedge({\Lambda} |B(0,r_0/8)|)),
\end{eqnarray*}
where we have denoted
${\Lambda}= {\Lambda}(t',r_0)  : =  {C_5} \int_0^{t'/2} \pr_0\left(\tau_{B(r_0/8)}>u\right) \, \pr_0\left(\tau_{B(r_0/8)}>t'/2-u\right) \,{\rm d}u.$

Finally,
$$
p^{U_R}(t,0,y)
\geq \eta(t,r_0)  \inf_{|z|\leq\frac{R}{2}+r_0} \nu(B(z,r_0)), \quad 2r_0 < |y| < \frac{R}{2},
$$
with $\eta(t,r)$ defined by  \eqref{eq:def_eta}. The Proposition follows.
\end{proof}

In the sequel we will  make use of the following symmetrization of the exponent $\psi$. Denote
\begin{align} \label{eq:Lchexpprof}
\Psi(r) = \sup_{|\xi| \leq r} \psi(\xi), \quad r>0.
\end{align}
It follows from a combination of \cite[Remark 4.8]{bib:Sch} and \cite[Section 3]{bib:Pru} that
there exist constants {$C_6, C_7 > 0$}, independent of the process (i.e. of $A$ and $\nu$), such that
\begin{align} \label{eq:PruitH}
{C_6} H\left(\frac{1}{r}\right) \leq \Psi(r) \leq {C_7} H\left(\frac{1}{r}\right), \;\; r>0, \quad \text{where}
\quad H(r) = \frac{\left\|A\right\|}{r^2}+ \int_{\R^d\setminus \{0\}} \left(1 \wedge \frac{|y|^2}{r^2}\right) \nu({\rm d}y)
\end{align}
($\|A\|$ denotes the {operator} norm of a square matrix $A$).
A direct proof of this estimate with explicit constants can be found in \cite[Lemma 6]{bib:Grz}.
It immediately follows from the definition that $H$ is non-increasing and that the doubling property $H(r) \leq 4 H(2r)$, $r>0$, holds.
In particular, $\Psi(2r) \leq 4 C_6^{-1} C_7 \Psi(r)$, for all $r>0$. Also, by \eqref{eq:PruitH} we get that $\nu \big(B(0,r)^c\big) \leq {C_6}^{-1} \Psi(1/r)$, $r >0$.

\smallskip

\subsection{Processes with polynomially decaying L\'evy measures}
\label{subsec:abs_polyn}

In this subsection we show how our general results translate to the case when the L\'evy measure is
polynomially decaying at infinity. We now give versions of Theorems \ref{thm:upper} and \ref{thm:lower_bound}
specialized to this case. Recall that for a symmetric $\alpha-$stable process with L\'evy-Khinchine exponent $\psi^{(\alpha)},$ by $\lambda_1^{(\alpha)}(U)$ we denote the principal Dirichlet eigenvalue of a set $U,$ and by $\lambda_{(\alpha)}$ -- the infimum of $\lambda_1^{(\alpha)}$ over all open sets of unit measure.

We first consider the class of L\'evy {processes that are close to non-Gaussian symmetric stable processes in the sense of the condition {\bf (C)}}. As we will see later (Lemma \ref{lem:mon}), when {the L\'evy measure of such process} has a density comparable with a nonincreasing function, then its decay at infinity is necessarily stable-like.

\begin{theorem} \label{thm:polynomial_irr}
Let $X$ be a symmetric L\'evy process with characteristic exponent $\psi$ as in \eqref{eq:Lchexp}, with Gaussian coefficient $A$ and L\'evy measure $\nu$ such that
{\bf (C)} with $\alpha\in(0,2)$ and \eqref{eq:psi_inf} hold, i.e.
\begin{itemize}
\item[(i)] $\psi(\xi)=\psi^{(\alpha)}(\xi) + o(|\xi|^\alpha)$ as $|\xi|\to 0$ for some $\alpha \in (0,2)$,
where $\psi^{(\alpha)}(\xi)$ is defined in \eqref{eq:psi-stable},
\end{itemize}
{and}
\begin{itemize}
\item[(ii)] $\lim_{|\xi| \to \infty} \frac{\psi(\xi)}{(\log |\xi|)^2} = \infty$.
\end{itemize}
Further, let $V^{\omega}$ be a Poissonian potential with bounded, compactly supported, nonnegative and nonidentically zero profile $W$.
Then the following hold.
\begin{itemize}
\item[(a)] For any fixed $x \in \R^d$ one has
$$
\limsup_{t\to\infty}\frac{\log u^\omega(t,x)}{t^{\frac{d}{d+\alpha}}}
\leq - \alpha \left(\frac{2}{d+2\alpha}\right)^{\frac{d}{d+\alpha}}
\left(\frac{\rho}{d}\right)^{\frac{\alpha}{d+\alpha}}
\left(\lambda_{(\alpha)}\right)^{\frac{d}{d+\alpha}},
\quad \qpr-\mbox{a.s.}
$$

\item[(b)] If there exist $C_8, C_9, r_0 > 0$ such that for every Borel set $E \subset \R^d$
$$
\nu \big(E \cap \left\{x:0<|x|\leq r_0\right\}\big) \geq {C_8} |E \cap \left\{x:0<|x|\leq r_0\right\}|,
$$
 and
$$
\nu\big(B(x,r_0)\big) \geq {C_9} |x|^{-d-\alpha}, \quad \text{for \ $|x| \geq r_0$,}
$$
then for any fixed $x \in \R^d$ one has
$$
\liminf_{t\to\infty}\frac{\log u^\omega(t,x)}{t^{\frac{d}{d+\alpha}}}\geq
- \left(2\alpha + \frac{9d}{2}\right)\left(\frac{2}{d+2\alpha}\right)^{\frac{d}{d+\alpha}}
\left(\frac{\rho\omega_d}{d}\right)^{\frac{\alpha}{d+\alpha}}\left(\lambda_1^{(\alpha)}(B(0,1))
\right)^{\frac{d}{d+\alpha}}, \quad \qpr-\text{a.s.}
$$
\end{itemize}
\end{theorem}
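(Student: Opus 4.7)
The plan is to derive both bounds by specializing Corollary \ref{cor:upper} to part (a) and Corollary \ref{cor:lower} to part (b). Assumption (ii) (the Hartman-Wintner type condition \eqref{eq:psi_inf}) immediately yields $e^{-t\psi(\cdot)} \in L^1(\R^d)$ for every $t>0$ (hence \eqref{eq:int}) and the strong Feller property, so the main work lies in producing a polynomial profile $F$ for assumption \textbf{(U)} and then tracking the constants through the rate formulas. In both parts the choice $F(r)=cr^{-\alpha}$ will do, giving $Q_1=\alpha$ in each corollary.

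For part (a), I first invoke Theorem \ref{th:Okura} (Okura), which under \textbf{(C)} and \eqref{eq:psi_inf} produces the IDS bound \eqref{eq:IDS_upper} with $\kappa_0 = \rho\,\lambda_{(\alpha)}^{d/\alpha}$. To verify \textbf{(U)}, note that \textbf{(C)} with $\alpha\in(0,2)$ together with $\inf_{|\xi|=1}\psi^{(\alpha)}(\xi)>0$ forces $\psi(\xi)\asymp|\xi|^\alpha$ near the origin, so the Pruitt-Schilling majorant $H$ in \eqref{eq:PruitH} satisfies $H(r)\asymp r^{-\alpha}$ as $r\to\infty$. Combined with the tail estimates of \cite{bib:Pru} applied to the pure jump process $X^\nu$, this gives $\pr_0(|X^\nu_t|\geq r)\leq c\, t\, r^{-\alpha}$ for $r\geq C_3 t$. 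If a non-degenerate Gaussian part is present, Proposition \ref{prop:gauss_est} absorbs it at the cost of an additional $e^{-cr}$ term, which is dominated by $r^{-\alpha}$. Hence \textbf{(U)} holds with $F(r)=cr^{-\alpha}$. Solving \eqref{eq:eq-ha} asymptotically for this profile gives
\[ g(t) \sim \left(\frac{d+2\alpha}{2}\right)^{\alpha/(d+\alpha)} \left(\frac{d}{\kappa_0}\right)^{\alpha^2/(d(d+\alpha))} t^{d/(d+\alpha)}, \]
and Corollary \ref{cor:upper} yields the upper bound coefficient $-(\kappa_0/d)^{\alpha/d}\cdot 2\alpha/(d+2\alpha)$ multiplied by $g(t)$. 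A direct algebraic simplification, using $\kappa_0=\rho\lambda_{(\alpha)}^{d/\alpha}$ to combine the powers of $d/\kappa_0$ and $\rho/d$, recovers exactly the stated constant of (a).

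For part (b), Proposition \ref{prop:conditionA} delivers the eigenvalue quasi-scaling \eqref{eq:lower-1} with $K$ any number $>1$. The two hypotheses on $\nu$ are tailored to Proposition \ref{prop:dirichlet-by-nu}: the Kato-type lower bound on $\nu|_{B(0,r_0)}$ plays the role of \eqref{eq:ass1}, while the polynomial estimate $\nu(B(x,r_0))\geq C_9 |x|^{-d-\alpha}$ for $|x|\geq r_0$ forces $\inf_{|y|\leq R/2+r_0}\nu(B(y,r_0))\geq c\,R^{-d-\alpha}$, hence $G(R_0,R)\geq c\,R^{-d-\alpha}$ for large $R$, with $R_0$ a fixed multiple of $r_0$. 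With $F(r)=cr^{-\alpha}$ as in part (a), computing the limsup in \eqref{eq:B_lower} yields $Q_2 = 2(d+\alpha)/(d+2\alpha)$, so that $6d/(d+2Q_1)+Q_2 = (8d+2\alpha)/(d+2\alpha)$. The final step is to choose the free parameter $\kappa$ in Corollary \ref{cor:lower} as $\kappa = \omega_d\rho\,\lambda_1^{(\alpha)}(B(0,1))^{d/\alpha}$; then $(\kappa/d)^{\alpha/d}=(\omega_d\rho/d)^{\alpha/d}\lambda_1^{(\alpha)}(B(0,1))$, the two summands in the coefficient pool together, and letting $K\searrow 1$ through rationals combines the prefactors into $(9d+4\alpha)/(d+2\alpha)\cdot((d+2\alpha)/2)^{\alpha/(d+\alpha)} = ((4\alpha+9d)/2)\cdot(2/(d+2\alpha))^{d/(d+\alpha)}$, yielding exactly the stated constant.

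The main obstacle is bookkeeping rather than conceptual: first, verifying that the tail estimate from \cite{bib:Pru} applied to $X^\nu$, adjusted via Proposition \ref{prop:gauss_est} when $A\neq 0$, really produces the clean polynomial profile $F(r)=cr^{-\alpha}$ under just \textbf{(C)} and \eqref{eq:psi_inf}; and second, reverse-engineering the choice of $\kappa$ in Corollary \ref{cor:lower} so that the powers of $\omega_d\rho/d$, $\lambda_1^{(\alpha)}(B(0,1))$, and $(d+2\alpha)/2$ collapse to the sharp coefficient. Once these delicate but routine simplifications are performed, both assertions follow as direct applications of the two general corollaries.
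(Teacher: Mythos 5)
Your proposal follows essentially the same route as the paper's proof: for (a), Okura's Theorem \ref{th:Okura} giving \eqref{eq:IDS_upper} with $\kappa_0=\rho\,\lambda_{(\alpha)}^{d/\alpha}$, verification of \textbf{(U)} with the polynomial profile $F(r)=c\,r^{-\alpha}$ and $\gamma=1$, and then Theorem \ref{thm:upper}/Corollary \ref{cor:upper} with $Q_1=\alpha$; for (b), Proposition \ref{prop:conditionA} with arbitrary $K>1$, Proposition \ref{prop:dirichlet-by-nu} yielding $G(2r_0,R)\geq c\,R^{-d-\alpha}$, Corollary \ref{cor:lower} with $Q_1=\alpha$, $Q_2=\frac{2(d+\alpha)}{d+2\alpha}$, the choice $\kappa=\rho\omega_d\big(\lambda_1^{(\alpha)}(B(0,1))\big)^{d/\alpha}$, and $K\downarrow 1$ through rationals. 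Your constant bookkeeping (the form of $g(t)$, the sum $\frac{6d}{d+2\alpha}+\frac{2(d+\alpha)}{d+2\alpha}=\frac{8d+2\alpha}{d+2\alpha}$, and the final simplifications) agrees with the paper's.

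The one point where you deviate, and where a small hole appears, is the verification of \textbf{(U)}: you apply Pruitt's estimate only to the jump part $X^{\nu}$ and then invoke Proposition \ref{prop:gauss_est} to absorb a \emph{non-degenerate} Gaussian component. The theorem (and, e.g., Example \ref{ex:polynomial_irr_2}) allows an arbitrary matrix $A$, including a nonzero degenerate one, for which Proposition \ref{prop:gauss_est} does not apply since it requires $\inf_{|\xi|=1}\xi\cdot A\xi>0$. The paper instead applies the Pruitt bound $\pr_0\big(\sup_{s\le t}|X_s|\geq r\big)\leq c\,t\,H(r)$ to the full process: $H$ already contains the Gaussian term $\|A\|/r^2=O(r^{-\alpha})$, and by \eqref{eq:PruitH} together with \textbf{(C)} one gets $H(r)\leq c\,\Psi(1/r)\leq c'\,r^{-\alpha}$ for large $r$, so \textbf{(U)} holds with $F(r)=r^{-\alpha}$, $\gamma=1$, for any $A$ in one stroke. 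Your splitting can also be repaired (a degenerate Gaussian marginal still has sub-Gaussian tails), but the direct route is what the paper uses and is cleaner; apart from this, your argument is the paper's argument.
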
	

\begin{proof} (a)
To prove the upper bound we will apply our general Theorem
\ref{thm:upper}. First, we verify condition \textbf{(U)}. From \cite[Section 3 and (3.2)]{bib:Pru} (see also \cite[Lemma 4.1]{bib:Sch}) and \eqref{eq:PruitH} combined with the basic asymptotic assumption (i) we get
$$
\pr_0(|X_t| > r) \leq \pr_0(\sup_{s \in (0,t]} |X_s| > r) \leq c_1 \, t \,  H(r) \leq c_2 \, t \, \Psi\left(\frac{1}{r}\right) \leq c_3 \frac{t}{r^{\alpha}},
$$
for every $t>0$ and sufficiently large $r >0$, with some contants $c_1,c_2,c_3 > 0$.
Thus \textbf{(U)} holds with $F(r) = r^{-\alpha}$ and $\gamma = 1$.

From Theorem \ref{th:Okura} we also see that
(\ref{eq:IDS_upper}) is satisfied with $\kappa_0=\rho(\lambda_{(\alpha)})^{d/\alpha}.$ To manage the correction terms appearing in the statement
of Theorem \ref{thm:upper}, observe that for given $\kappa >0$ (cf. \eqref{eq:basicf}, \eqref{eq:ha_te})
$$
f_{F,\alpha,\kappa}(r)= \left(\alpha+\frac{d}{2}\right)\left(\frac{d}{\kappa}\right)^{\alpha/d} \, (\log r)^{\frac{\alpha+d}{d}}
\quad \text{for sufficiently large \ $r$},
$$
so that
$$
h_{F,\alpha,\kappa}(t)=\exp\left\{
\left(\frac{\kappa}{d}\right)^{\frac{\alpha}{d+\alpha}}\left(\frac{2}{2\alpha+d}\right)
^{\frac{d}{d+\alpha}}t^{\frac{d}{d+\alpha}}\right\}, \quad \text{for large \ $t$},
$$
and
\[g(t)=g_{F,\alpha,\kappa}(t)= \left(\frac{2}{2\alpha+d}
\left(\frac{\kappa}{d}\right)^{\frac{\alpha}{d}}
\right)^{-\frac{\alpha}{d+\alpha}}\,t^{\frac{d}{d+\alpha}}.\]
By inspection we check that
\begin{eqnarray}\label{eq:g-h0}
 \frac{\log h_{F,\alpha,\kappa}(t)}{g_{F,\alpha,\kappa}(t)} = \left(\frac{\kappa}{d}\right)^{\frac{\alpha}{d}}
\left(\frac{2 }{2\alpha+d}\right).
\end{eqnarray}
Using this observation with $\kappa=\kappa_0,$ from (\ref{eq:IDS_upper}) and (\ref{eq:g-h0})   we get that
 $\qpr$-almost surely
\[\limsup_{t\to\infty}\frac{\log u^\omega(t,x)}{g(t)}\leq
-\frac{2\alpha}{2\alpha+d}  \left(\frac{\kappa_0}{d}\right)^{\frac{\alpha}{d}}
=-\frac{2\alpha}{2\alpha+d}
\left(\frac{\rho}{d}\right)^{\frac{\alpha}{d}}\,\lambda_{(\alpha)},\]
or
\[\limsup_{t\to\infty}\frac{\log u^\omega(t,x)}{t^{\frac{d}{d+\alpha}}}
\leq - \alpha \left(\frac{2}{2\alpha+d}\right)^{\frac{d}{d+\alpha}}
\left(\frac{\kappa_0}{d}\right)^{\frac{\alpha}{d+\alpha}}=
- \alpha\left(\frac{2}{2\alpha+d}\right)^{\frac{d}{d+\alpha}}
\left(\frac{\rho}{d}\right)^{\frac{\alpha}{d+\alpha}}
\left(\lambda_{(\alpha)}\right)^{\frac{d}{d+\alpha}}.
\]
\smallskip

\noindent(b)
Proposition \ref{prop:conditionA} asserts that the assumptions
of Theorem \ref{thm:lower_bound} are satisfied with any $K>1$. To match the asymptotic profile from the upper bound, we take again $F(r)=r^{-\alpha}$, and $R_0=2r_0.$ We shall first obtain the lower bound with any given $\kappa>0,$ and at the end we will choose a suitable $\kappa.$ We first check the assumptions of Corollary \ref{cor:lower}.
{Due to Proposition \ref{prop:dirichlet-by-nu} and the lower bound on $\nu$ in (b)
we see that $G(2r_0, R)\geq c_4 R^{-d-\alpha}$ for large $R$, with $c_4 >0$}, therefore
\begin{equation}
\limsup_{r \to \infty} \frac{\left|\log {G\left(2r_0, \frac{2\sqrt d r}{(\log r)^{\frac{2}{d}+2}}\right)}\right|}{r\wedge|\log F(r)| + (d/2) \log r} \leq \frac{d+\alpha}{d/2 + \alpha},
\end{equation}
and we can take {$Q_1=\alpha, $ $Q_2=\frac{\alpha+d}{\alpha+d/2}$} in \eqref{eq:statement-upper:A}.
This gives that, for any fixed $\kappa>0$
and $x \in \R^d$, $\qpr$-almost surely,
$$
\liminf_{t \to \infty} \frac{\log u^{\omega}(t,x)}{g(t)} \geq -K\left(\frac{\omega_d\rho}{d}\right)^{\alpha/d}\lambda_1^{(\alpha)}(B(0,1)) -\left(\frac{\kappa}{d}\right)^{\alpha/d}\left(\frac{ 2\alpha+8d }{2 \alpha+d}\right),
$$
i.e.
\[\liminf_{t\to\infty}\frac{\log u^\omega(t,x)}{t^{\frac{d}{d+\alpha}}}\geq
-\frac{ {K} \lambda_1^{(\alpha)}(B(0,1))
\left(\frac{\omega_d\rho}{d}\right)^{\frac{\alpha}{d}} + (\alpha+4d)\left(\frac{\kappa}{d}\right)^{\frac{\alpha}{d}}\,
\frac{2}{2\alpha+d}
}{\left(\left(\frac{\kappa}{d}\right)^{\frac{\alpha}{d}}\,
\frac{2}{2\alpha+d}\right)^{\frac{\alpha}{d+\alpha}}}.
\]
Letting $K\downarrow 1$ through rationals, we  get this statement with $K=1.$
To match the upper bound, we take $\kappa= \kappa_0=\rho\omega_d\left(\lambda_1^{(\alpha)}(B(0,1))\right)^{\frac{d}{\alpha}}$
i.e. the value corresponding to that in the upper bound.
We  easily check that with this choice of $\kappa$ we get
$$
\liminf_{t\to\infty}\frac{\log u^\omega(t,x)}{t^{\frac{d}{d+\alpha}}}\geq
- \left(2\alpha+\frac{9d}{2}\right) \left(\frac{2}{2\alpha+d}\right)^{\frac{d}{d+\alpha}}
\left(\frac{\rho\omega_d}{d}\right)^{\frac{\alpha}{d+\alpha}}\left(\lambda_1^{(\alpha)}(B(0,1))
\right)^{\frac{d}{d+\alpha}}, \quad \qpr-\text{a.s.}
$$
The proof is complete.
\end{proof}

\begin{remark}{\rm

\noindent
\begin{itemize}
\item[(1)]
When $\inf_{|\xi|=1} \psi_A(\xi)> 0$, then (ii) automatically holds and needs not to be assumed a priori.
\item[(2)]
If $\nu({\rm d}x)=\nu(x){\rm d}x$ and there exists a nonincreasing profile function $g:(0,\infty) \to (0,\infty)$ such that $\nu(x) \asymp g(|x|)$, $x \in \R^d \setminus \left\{0\right\}$, then  condition (i) implies that {
\begin{align} \label{eq:monot_psi}
 \quad g(r) \asymp \Psi(1/r) r^{-d} \asymp r^{-d-\alpha}, \quad \text{for all \ $r \geq 1$, }
\end{align}
where $\Psi$ is the symmetrization of  $\psi$ defined in \eqref{eq:Lchexpprof}.
In particular, the assumption in part (b) of the theorem automatically holds and can be omitted.
A short proof of \eqref{eq:monot_psi} is given in Lemma \ref{lem:mon} below.}
\item[(3)]
When the process $X^{(\alpha)}$ is isotropic, then our upper and lower bound differ by just a multiplicative constant
(see the comment following Theorem \ref{th:Okura}).
However, we were not able to get the almost sure convergence in this theorem, even for isotropic processes.
We do not know whether it is a flaw of the method, or if it is an intrinsic feature of the functional, signaling the
existence of some `small deviations' phenomenon here. {The detailed study of this case is the purpose of an ongoing project.}
\item[(4)]
In the final part of the proof above we could do better:  choose $\kappa=a\kappa_0$ and then optimize over $a>0.$ This  gives:
$$
\liminf_{t\to\infty}\frac{\log u^\omega(t,x)}{t^{\frac{d}{d+\alpha}}}
\geq
- (\alpha+4d)^{\frac{\alpha}{d+\alpha}}\left(
\left(\frac{d}{\alpha}\right)^{\frac{\alpha}{d+\alpha}}+\left(\frac{\alpha}{d}
\right)
^{\frac{d}{d+\alpha}}\right)
\left(\frac{\rho\omega_d}{d}\right)^{\frac{\alpha}{d+\alpha}}\left(\lambda_1^{(\alpha)}(B(0,1))
\right)^{\frac{d}{d+\alpha}},  \quad \qpr-\mbox{a.s.}
$$
This is the best lower bound that can be derived from Theorem \ref{thm:lower_bound}.
\end{itemize}
}
\end{remark}

\begin{lemma} {\label{lem:mon}
Let $X$ be a symmetric L\'evy process with characteristic exponent $\psi$ as in \eqref{eq:Lchexp}, with Gaussian coefficient $A \equiv 0$ and L\'evy measure $\nu({\rm d}x)=\nu(x){\rm d}x,$ and such that there exists a nonincreasing profile function $g:(0,\infty) \to (0,\infty)$ for which $\nu(x) \asymp g(|x|)$, $x \in \R^d \setminus \left\{0\right\}$. Then condition (i) of Theorem \ref{thm:polynomial_irr} implies \eqref{eq:monot_psi}.}
\end{lemma}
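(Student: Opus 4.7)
The plan is to split the two-sided bound \eqref{eq:monot_psi} into the two asymptotic equivalences it claims. The second, $\Psi(1/r)r^{-d}\asymp r^{-d-\alpha}$, is almost immediate: the nondegeneracy $\inf_{|\xi|=1}\psi^{(\alpha)}(\xi)>0$ together with the scaling $\psi^{(\alpha)}(c\xi)=c^{\alpha}\psi^{(\alpha)}(\xi)$ gives $\psi^{(\alpha)}(\xi)\asymp|\xi|^{\alpha}$, so condition (i) yields $\psi(\xi)\asymp|\xi|^{\alpha}$ near the origin and hence $\Psi(1/r)\asymp r^{-\alpha}$ for $r\ge 1$. The Pruitt-type estimate \eqref{eq:PruitH} then also gives $H(r)\asymp r^{-\alpha}$.

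For the first equivalence $g(r)\asymp\Psi(1/r)r^{-d}$ I would prove the two inequalities separately. The upper bound $g(r)\lesssim r^{-d-\alpha}$ uses only the monotonicity of $g$: for $r$ large,
\[
g(r)\,\frac{(1-2^{-d})r^d}{d}=g(r)\int_{r/2}^{r}s^{d-1}\,ds\le\int_{r/2}^{r}g(s)s^{d-1}\,ds\le\frac{\nu(B(0,r/2)^c)}{|S^{d-1}|}\lesssim\Psi(2/r)\lesssim r^{-\alpha},
\]
exploiting the tail bound $\nu(B(0,R)^c)\le C_6^{-1}\Psi(1/R)$ and the doubling of $\Psi$ noted just after \eqref{eq:PruitH}. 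Rearranging yields the stated bound.

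The matching lower bound $g(r)\gtrsim r^{-d-\alpha}$ is the substantive part and genuinely needs the full strength of (i) rather than only $\psi\asymp|\xi|^{\alpha}$: one can construct nonincreasing $g$ with $H(r)\asymp r^{-\alpha}$ for which $g(r_n)=o(r_n^{-d-\alpha})$ along a sparse sequence $r_n\to\infty$, and any such $g$ forces $\psi(\xi)/|\xi|^{\alpha}$ to oscillate rather than converge, violating (i). My plan is a Karamata--Tauberian argument on the rescaled densities $g_r(s):=r^{d+\alpha}g(rs)$. These are nonincreasing in $s$ and, by the upper bound just obtained, are uniformly dominated by $Cs^{-d-\alpha}$ on each set $\{s\ge\delta\}$ once $r$ is large. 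Helly's selection theorem extracts a subsequential pointwise limit $g_{r_{n_k}}\to h$; on the other hand, the substitution $w=z/r$ in the Fourier representation produces
\[
r^{\alpha}\psi(\xi/r)=\int_{\R^d}(1-\cos(\xi\cdot w))\,r^{d+\alpha}\nu(rw)\,dw,
\]
whose left-hand side converges to $\psi^{(\alpha)}(\xi)$ by (i). Dominated convergence, justified for $\alpha\in(0,2)$ by the integrability of $(1-\cos(\xi\cdot w))|w|^{-d-\alpha}$, identifies $h$ with the radial profile of the L\'evy density of $\psi^{(\alpha)}$, namely a fixed multiple of $s^{-d-\alpha}$; uniqueness of the subsequential limit upgrades this to convergence of the whole family, and evaluating at $s=1$ gives $r^{d+\alpha}g(r)\to c>0$, as required. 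The main obstacle I anticipate is controlling the integrand near $w=0$, where $g_r$ need not be uniformly bounded: I would cut off the contribution of $|w|<\delta$ using the L\'evy integrability $\int(1\wedge|z|^2)\nu(dz)<\infty$ and then let $\delta\to 0$.
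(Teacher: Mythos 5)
Your first half is fine: the identification $\Psi(1/r)\asymp r^{-\alpha}$ for $r\ge 1$ and the monotonicity-plus-Pruitt argument for $g(r)\lesssim r^{-d-\alpha}$ are essentially what the paper does (it bounds $g(r)r^d$ by the truncated second moment in \eqref{eq:lem_stable} rather than by the tail, but the idea is identical). The genuine gap is in the lower bound, and it begins with your premise. You assert that $g(r)\gtrsim r^{-d-\alpha}$ "genuinely needs the full strength of (i)" and that there exist nonincreasing $g$ with $H(r)\asymp r^{-\alpha}$ but $g(r_n)=o(r_n^{-d-\alpha})$ along a sparse sequence. This is false for $\alpha\in(0,2)$, and the paper's proof is precisely the elementary argument you rule out: starting from \eqref{eq:lem_stable}, split the tail integral at radius $sr$ and the truncated second moment at radius $ur$; monotonicity of $g$ gives $c_1r^{-\alpha}\le c_2u^{2-\alpha}r^{-\alpha}+c_2s^{-\alpha}r^{-\alpha}+c(s^d-u^d)\,g(ur)\,r^d$, and since $\alpha<2$ one can fix $u_0$ small and $s_0$ large so that the first two terms absorb at most half of $c_1r^{-\alpha}$, forcing $g(u_0r)\gtrsim r^{-d-\alpha}$ for all large $r$. (Your would-be counterexample cannot exist: a premature drop of $g$ near $r_n$ destroys the lower bound $H(r)\gtrsim r^{-\alpha}$ at intermediate radii between $r_n$ and the scale where $g$ rejoins $r^{-d-\alpha}$, exactly because for $\alpha<2$ the truncated second moment is dominated by scales comparable to $r$.)

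The Tauberian route you substitute does not close this gap as written. To pass to the limit in $r^{\alpha}\psi(\xi/r)=\int(1-\cos(\xi\cdot w))\,r^{d+\alpha}\nu(rw)\,{\rm d}w$ by dominated convergence you need pointwise convergence of $r^{d+\alpha}\nu(rw)$, but the hypothesis is only $\nu(x)\asymp g(|x|)$, and Helly gives subsequential pointwise limits of the radial envelope $g_r$, not of $\nu(r\cdot)$ itself. What convergence of the exponents actually yields (via the convergence theorem for infinitely divisible laws, not DCT) is vague convergence of $r^{d+\alpha}\nu(rw)\,{\rm d}w$ to the stable measure $n({\rm d}\theta)s^{-1-\alpha}{\rm d}s$ on $\R^d\setminus\{0\}$; this limit need not be absolutely continuous or isotropic (the spherical measure $n$ in (i) may be singular), so "the radial profile of the L\'evy density of $\psi^{(\alpha)}$" with which you identify $h$ need not exist, and at best one gets sandwich bounds $C^{-1}h(|w|)\,{\rm d}w\le \nu^{(\alpha)}({\rm d}w)\le C\,h(|w|)\,{\rm d}w$ on annuli, never $h(s)=cs^{-d-\alpha}$. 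Consequently your final claim $r^{d+\alpha}g(r)\to c>0$ is strictly stronger than the lemma and false in general under its hypotheses (an angular modulation of $\nu$ inside the comparability corridor leaves (i) intact but destroys convergence of $r^{d+\alpha}g(r)$); only the two-sided bound \eqref{eq:monot_psi} can hold. A correct salvage along your lines would be: apply vague convergence on a fixed annulus $A=\{1/2<|w|<1\}$, use $r^{d+\alpha}\nu(rw)\le C r^{d+\alpha}g(r/2)$ there by monotonicity, and conclude $r^{d+\alpha}g(r/2)\gtrsim\nu^{(\alpha)}(A)>0$ --- but that is a different argument from the one you wrote, and still heavier machinery than the paper's two-line splitting of \eqref{eq:lem_stable}.
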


\begin{proof}
First note that by \cite[Lemma 5 (a)]{bib:KS14} and (i) of Theorem \ref{thm:polynomial_irr}, we have $\Psi(|x|) \asymp \psi(x) \asymp |x|^{\alpha}$, whenever $|x| \leq 1$. As in this case for $r\geq 1$ we have (see \eqref{eq:PruitH})
\[\frac{1}{r^\alpha}\asymp\Psi\left(\frac{1}{r}\right)\asymp H(r)=\int_{\mathbb R^d\setminus\{0\}}\left(1\wedge\frac{|y|^2}{r^2}\right)\nu({\rm d}y),\]
so that there are constants $c_1, c_2 >0$ for which
\begin{align} \label{eq:lem_stable}
\frac{c_1}{r^{\alpha}} \leq \frac{1}{r^2} \int_{|y| < r} |y|^2 g(|y|) dy + \int_{|y| \geq r} g(|y|) dy \leq \frac{c_2}{r^{\alpha}}, \quad r \geq 1.
\end{align}
By the monotonicity of $g$, for $r\geq 1$ we get
\[\frac{c_2}{r^\alpha}\geq
\frac{1}{r^2}\int_{|y|<r} |y|^2 g(|y|){\rm d}y \geq {c_3}g(r)r^d.\]
To show the opposite inequality, observe that for any $u \in (0,1)$  and  $s > 1$  from the
 monotonicity of $g$ and  the upper bound in \eqref{eq:lem_stable}, we may write
$$
\int_{|y| \geq r} g(|y|) dy = \int_{r \leq |y| < sr} g(|y|) dy + \int_{|y| \geq sr} g(|y|) dy \leq c_4 (s^d-1) g(r) r^d + \frac{c_2}{s^{\alpha}} \frac{1}{r^{\alpha}}, \quad r \geq 1,
$$
for some $c_4 >0$. Similarly,
\begin{align*}
\frac{1}{r^2} \int_{|y| < r} |y|^2 g(|y|) dy & = u^2 \frac{1}{(ur)^2} \int_{|y| < ur} |y|^2 g(|y|) dy + \frac{1}{r^2} \int_{ur \leq |y| < r} |y|^2 g(|y|) dy \\ & \leq \frac{c_2 u^{2-\alpha}}{r^{\alpha}} + c_4 (1-u^d) g(ur) r^d, \qquad r \geq \frac{1}{u}.
\end{align*}
Adding these two estimates, from \eqref{eq:lem_stable} we obtain,
using also the monotonicity of $g:$
 \[\frac{c_1}{r^\alpha}\leq \frac{c_2u^{2-\alpha}}{r^\alpha} +\frac{c_2}{s^\alpha r^\alpha} + c_4 (s^d-u^d)g(ur) r^d.\]
Choosing $u_0\in(0,1)$  so small and $s_0>1$ so large  that $c_2 ({u_0}^{2-\alpha} + {s_0}^{-\alpha}) \leq c_1/2$, we finally obtain
$$
\frac{c_1}{2 r^{\alpha}} \leq   c_4 (s_0^d-u_0^d) g(u_0r) r^d, \quad r \geq \frac{1}{u_0},
$$
and, as a consequence, $r^{-d-\alpha} \leq \frac{2c_4(s_0^d - u_0^d)}{c_1 u^{d-\alpha}} g(r)$, for every $r \geq 1$, which is the claimed inequality. This completes the proof.
\end{proof}

We now illustrate our Theorem \ref{thm:polynomial_irr} with several examples.

\begin{example} {\rm {\textbf{(Absolutely continuous perturbations of isotropic stable processes)}} \label{ex:polynomial_irr}
 {
\noindent
\begin{itemize}
\item[(1)]\emph{Non-Gaussian isotropic stable processes.} Let $\psi(\xi) = |\xi|^{\delta}$, with $\delta \in (0,2)$. In this case $\nu({\rm d}x) = C_{d,\delta} |x|^{-d-\delta} {\rm d}x$ and we have to take $\alpha = \delta$ and $\psi^{(\alpha)} = \psi$. In particular, the assumptions of  Theorem \ref{thm:polynomial_irr} hold.
\item[(2)] \emph{Mixture of isotropic stable processes (possibly with Brownian component).} Let $\psi(\xi) = a_0 |\xi|^2 + \sum_{i=1}^{n} a_i |\xi|^{\alpha_i}$, $n \in \N$, $a_0 \geq 0$, and $a_i >0$, $\alpha_i \in (0,2)$, for $i =1,...,n$. Then the assumptions of Theorem \ref{thm:polynomial_irr} are satisfied with $\alpha =  \alpha_{i_{min}}:=\min_i \alpha_i$ and $\psi^{(\alpha)}(\xi) = a_{i_{min}} |\xi|^{\alpha_{i_{min}}}$.
\item[(3)] \emph{Isotropic geometric stable process with Gaussian component.} Let $\psi(\xi)= \xi \cdot A \xi + \log(1+|\xi|^{\delta})$, with $A$ such that $\inf_{|\xi|=1} \xi \cdot A \xi >0$ and $\delta \in (0,2)$. Again, Theorem \ref{thm:polynomial_irr} applies with $\alpha = \delta$ and $\psi^{(\alpha)}(\xi) = |\xi|^{\alpha}$.
\end{itemize}
}}
\end{example}

\begin{example} {\rm {\textbf{(More general perturbations of symmetric stable processes)}} \label{ex:polynomial_irr_2}

\noindent
{Let $\delta \in (0,2)$ and let $n$ be a symmetric finite measure on the unit sphere $S^{d-1}$ such that
$$
n(B(\theta,r) \cap S^{d-1}) \geq c_0 r^{d-1}, \quad \theta \in S^{d-1}, \ \ r \in (0,1/2],
$$
for some constant $c_0>0$.
Denote the corresponding stable L\'evy measure by $\nu^{(\delta)}({\rm d}r {\rm d}\theta) = n({\rm d} \theta)r^{-1-\delta} {\rm d}r$.
Note that we do not impose similar growth condition on $n$ from above, which means that $\nu^{(\delta)}$ is not necessarily absolutely continuous with respect to the Lebesgue measure. Furthermore, let $\nu_{\infty}$ be a (non-necessarily infinite) measure on $\R^d \setminus \left\{0\right\}$ such that
\begin{align} \label{eq:dom_by_delta}
\int_{\R^d \setminus \left\{0\right\}}(1 \wedge (r|z|)^2)\nu_{\infty}({\rm d}z)= o(r^\delta) \quad \text{as \ $r \to 0$,}
\end{align}
and consider a symmetric L\'evy process with L\'evy-Khinchine exponent $\psi$ as in \eqref{eq:Lchexp} with arbitrary diffusion matrix $A$ and L\'evy measure $\nu = \nu^{(\delta)} + \nu_{\infty}$.
Then the assumptions of Theorem \ref{thm:polynomial_irr} hold with $\alpha = \delta$ and
$\psi^{(\alpha)}(\xi) = \int_0^\infty\int_{S^{d-1}} (1-\cos(\xi\cdot r\theta))\,n({\rm d}\theta)\frac{{\rm d}r}{r^{\alpha+1}}$.
Typical examples of measures $\nu_{\infty}$ satisfying \eqref{eq:dom_by_delta} are as follows.
\begin{itemize}
\item[(1)] \emph{{Other} stable L\'evy measures}: $\nu_{\infty} := \nu^{(\widetilde \delta)}({\rm d}r {\rm d}\theta) = \widetilde n({\rm d} \theta)r^{-1- \widetilde \delta} {\rm d}r$, where $\widetilde \delta \in (\delta, 2)$ and $\widetilde n$ is a symmetric finite measure on $S^{d-1}$.
\item[(2)] \emph{Product L\'evy measures with profiles with sufficiently fast decay at infinity}:  $\nu_{\infty} := \widetilde n({\rm d} \theta) f(r) {\rm d}r$, where $\widetilde n$ is a symmetric finite measure on $S^{d-1}$ and $f:(0,\infty) \to [0,\infty)$ is a function such that\linebreak $\int_0^{\infty} s^2 f(s) {\rm d}s < \infty$.
\item[(3)] \emph{Purely discrete L\'evy measures}. Let $\left\{v_k: k=1,..,k_0 \right\}$ be a family of $k_0 \in \N$ vectors in $\R^d$. For $q>0$ denote
$$
A_q=\left\{x \in \R^d: x=2^{q n} v_k, \ \text{where} \ n \in \Z, \ k = 1, ..., k_0 \right\}
$$
and
$$
f(s):= \1_{[0,1]}(s) \cdot s^{-\theta/q}  +  \1_{(1,\infty)}(s) \cdot s^{-\beta/q} , \quad s > 0,
$$
with $\theta \in (0,2q)$ and $\beta > 2q$. Then one can take
$$
\nu_{\infty}({\rm d}y):= \int_{\R^d} f(|y|) \delta_{A_q}({\rm d}y) = \sum_{y \in A_q} f(|y|).
$$
\end{itemize}
}
}
\end{example}

We now turn to the class of processes with L\'evy measures that have second moment finite. In this case, for a given Gaussian matrix $A=(a_{ij})_{1 \leq i, j \leq d},$ the coefficients
\begin{align} \label{eq:lambda_for_A_nu}
\widetilde A = (\widetilde a_{ij})_{1 \leq i, j \leq d}, \quad \text{with \ $\widetilde a_{ij} = a_{ij} + \frac{1}{2} \int_{\R^d} y_i y_j \nu(y)dy$},
\end{align}
are well defined (see Proposition \ref{prop:suff_for_Okura} (i)).
In what follows, by $\lambda_1^{(2)}(U)$ we will always denote the principal eigenvalue of the diffusion process with characteristic exponent $\psi^{(2)}(\xi) = \xi \cdot \widetilde A \xi$, killed on leaving an open bounded set $U \subset \R^d$.
Also, $\lambda_{(2)}$ denotes the infimum of $\lambda_1^{(2)}(U)$ over all open sets $U$ of unit measure {(recall the comment after Theorem \ref{th:Okura})}.

We now discuss the case of L\'evy measures with polynomial tails whose decay at infinity
is faster than stable.
To avoid some technical difficulties and for more clarity, in the theorem below we restrict our attention to the absolutely continuous case {(cf. Example \ref{ex:ex_irr} (2))}.

\begin{theorem} \label{thm:polynomial}
Let $X$ be a symmetric L\'evy process with characteristic exponent $\psi$ as in \eqref{eq:Lchexp}, with defining parameters $A = (a_{ij})_{1 \leq i, j \leq d}$ and $\nu({\rm d}x)=\nu(x){\rm d}x.$ {Assume that}
\begin{itemize}
\item[(i)] either {$\inf_{|\xi|=1} \psi_A(\xi) > 0$, $\liminf_{|\xi| \to \infty} \frac{\psi_{\nu}(\xi)}{\log |\xi|} > 0$ or $\psi_A(\xi) \equiv 0$, $\lim_{|\xi| \to \infty} \frac{\psi_{\nu}(\xi)}{(\log |\xi|)^2} = \infty$},
\item[(ii)] there exist {$C_{10} \geq C_{11} > 0 $} and $\delta_1 \geq \delta_2 >2$ such that
$$
 {{C_{11}}}\left(\frac{1}{1+|x|^{d+\delta_1}}\right) \leq \nu(x) \leq \frac{{C_{10}}}{|x|^{d+\delta_2}}, \quad x \in \R^d.
$$
\end{itemize}
Moreover, let $V^{\omega}$ be a Poissonian potential with bounded, compactly supported, nonnegative and nonidentically zero profile $W$.
Then, for any fixed $x \in \R^d$, one has
$$
\limsup_{t\to\infty}\frac{\log u^\omega(t,x)}{t^{\frac{d}{d+2}}}
\leq - \delta_2\left(\frac{2}{2\delta_2+d}\right)^{\frac{d}{d+2}}
\left(\frac{\rho}{d}\right)^{\frac{2}{d+2}}
\left(\lambda_{(2)}\right)^{\frac{d}{d+2}},
\quad \qpr-\mbox{a.s.}
$$
and
$$
\liminf_{t\to\infty}\frac{\log u^\omega(t,x)}{t^{\frac{d}{d+2}}}\geq
- \left(2\delta_1+\frac{9d}{2}\right)\left(\frac{2}{2\delta_1+d}\right)^{\frac{d}{d+2}}
\left(\frac{\rho\omega_d}{d}\right)^{\frac{2}{d+2}}\left(\lambda_1^{(2)}(B(0,1))
\right)^{\frac{d}{d+2}}, \quad \qpr-\text{a.s.}
$$
\end{theorem}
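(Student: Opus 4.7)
The plan is to follow the pattern of Theorem \ref{thm:polynomial_irr} but with $\alpha=2$, treating the two tail exponents $\delta_2, \delta_1$ separately for the upper and lower bounds. Since $\delta_2>2$, the L\'evy measure has finite second moment, so Proposition \ref{prop:suff_for_Okura}(i) gives condition \textbf{(C)} with $\alpha=2$ and $\psi^{(2)}(\xi)=\xi\cdot\widetilde A\xi$, where $\widetilde A$ is defined by \eqref{eq:lambda_for_A_nu}. Hypothesis (i) guarantees the Hartman--Wintner condition \eqref{eq:psi_inf}: indeed, when $\inf_{|\xi|=1}\psi_A(\xi)>0$ we have $\psi(\xi)\geq c|\xi|^2$, and in the purely jump case the growth of $\psi_\nu$ is imposed directly. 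Consequently, Theorem \ref{th:Okura} applies and gives \eqref{eq:IDS_upper} with $\kappa_0=\rho(\lambda_{(2)})^{d/2}$, while Proposition \ref{prop:conditionA} yields \eqref{eq:lower-1} with $\alpha=2$ and any $K>1$ (to be sent to $1$ at the end).

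For the upper bound, I will invoke Theorem \ref{thm:upper} (together with Corollary \ref{cor:upper}) with the profile $F(r)=r^{-\delta_2}$. The crucial input is the verification of condition \textbf{(U)} with this $F$ and some $\gamma>0$. For the pure jump case, I extract an upper bound on $p(t,x)$ of the form $p(t,x)\leq c\,t\,|x|^{-d-\delta_2}$ for $|x|\geq C_3 t$ using the sharp transition density estimates of \cite{bib:KS14} referenced earlier (Lemmas \ref{lem:density_estimate}--\ref{lem:tran_dens_ub}); integrating over $\{|y|\geq r\}$ then yields $\pr_0(|X_t|\geq r)\leq c't\,r^{-\delta_2}$. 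In the Gaussian case, I combine this pure-jump bound with Proposition \ref{prop:gauss_est}, which shows that the diffusive component does not spoil \textbf{(U)} (the Gaussian tail $e^{-C_4 r}$ is negligible compared to $F(C_4 r)$). With $|{\log F(r)}|/\log r=\delta_2$ we may take $Q_1=\delta_2$ in Corollary \ref{cor:upper}; the explicit calculation of $h_{F,2,\kappa_0}(t)$, $g(t)$ and the limiting constant is identical, line-for-line, to the one in the proof of Theorem \ref{thm:polynomial_irr}(a) (with $\alpha=2$ and $\delta_2$ replacing $\alpha$ in the $Q_1$ position) and produces the stated upper bound.

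For the lower bound I apply Theorem \ref{thm:lower_bound} with $F(r)=r^{-\delta_1}$, matching the pointwise lower bound on $\nu$. The assumption \eqref{eq:ass1} of Proposition \ref{prop:dirichlet-by-nu} is immediate with $r_0=1/2$ and $C_5=C_{11}/2^{d+\delta_1}$, since (ii) gives $\nu(x)\geq C_{11}/2^{d+\delta_1}$ on $B(0,1)$. For $|z|\leq R/2+r_0$ with $|z|$ large, every $y\in B(z,r_0)$ satisfies $|y|\leq (3/2)|z|$, hence $\nu(y)\geq c|z|^{-d-\delta_1}$, which yields $\nu(B(z,r_0))\geq c'|z|^{-d-\delta_1}$ and therefore $G(R_0,R)\geq c''R^{-d-\delta_1}$ for large $R$. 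With this estimate the conditions of Corollary \ref{cor:lower} hold with $Q_1=\delta_1$ and $Q_2=2(d+\delta_1)/(2\delta_1+d)$, so
\[
\frac{6d}{d+2Q_1}+Q_2=\frac{6d+2(\delta_1+d)}{2\delta_1+d}=\frac{2\delta_1+8d}{2\delta_1+d}.
\]
Choosing $\kappa=\rho\omega_d(\lambda_1^{(2)}(B(0,1)))^{d/2}$ in \eqref{eq:statement-upper:A} to balance the two negative terms, and then sending $K\downarrow 1$ through rationals, I obtain exactly
\[
-\Big(2\delta_1+\tfrac{9d}{2}\Big)\Big(\tfrac{2}{2\delta_1+d}\Big)^{d/(d+2)}\Big(\tfrac{\rho\omega_d}{d}\Big)^{2/(d+2)}\big(\lambda_1^{(2)}(B(0,1))\big)^{d/(d+2)},
\]
by the same algebraic simplification carried out at the end of the proof of Theorem \ref{thm:polynomial_irr}(b).

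The main obstacle is the verification of \textbf{(U)} with the sharp profile $F(r)=r^{-\delta_2}$. The Pruitt-type bound $\pr_0(|X_t|\geq r)\leq c\,t\,\Psi(1/r)$ that sufficed in Theorem \ref{thm:polynomial_irr}, where $\Psi(1/r)\asymp r^{-\alpha}$ for $\alpha\in(0,2)$, here only produces $\Psi(1/r)\asymp r^{-2}$ because of condition \textbf{(C)} with $\alpha=2$; this would give a rate driven by $F(r)=r^{-2}$ rather than by $r^{-\delta_2}$, and so would not see the true tail $\delta_2$ of the L\'evy measure at all. Extracting the genuine polynomial decay of $\pr_0(|X_t|\geq r)$ therefore requires pointwise heat kernel bounds for nonlocal operators with polynomial jump kernels, which is exactly what the estimates of \cite{bib:KS14} provide and why they are needed in this regime.
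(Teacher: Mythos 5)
Your proposal is correct and follows essentially the same route as the paper: condition \textbf{(C)} via Proposition \ref{prop:suff_for_Okura}, the IDS input from Theorem \ref{th:Okura}, verification of \textbf{(U)} with $F(r)=r^{-\delta_2}$ through the heat kernel bound of Lemma \ref{lem:density_estimate} (plus Proposition \ref{prop:gauss_est} in the nondegenerate Gaussian case), and the lower bound via Propositions \ref{prop:conditionA} and \ref{prop:dirichlet-by-nu} with $F(r)=r^{-\delta_1}$, $Q_1=\delta_1$, $Q_2=\frac{2(d+\delta_1)}{2\delta_1+d}$, the choice $\kappa=\rho\omega_d(\lambda_1^{(2)}(B(0,1)))^{d/2}$ and $K\downarrow 1$, exactly as in the paper. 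The only cosmetic deviations (quoting the density bound as $c\,t\,|x|^{-d-\delta_2}$ on $|x|\geq C_3 t$ rather than $c\,t^{\delta_2/2}|x|^{-d-\delta_2}$ everywhere, and invoking Corollary \ref{cor:upper} explicitly) do not change the argument or the constants.
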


\begin{proof}
By Proposition \ref{prop:suff_for_Okura} (i) not only the coefficients
 $a_{ij}$ given by \eqref{eq:lambda_for_A_nu} are finite, but also  one has
$$
\psi(\xi) = \xi \cdot \widetilde A \xi + o(|\xi|^2),
$$
i.e. the basic asymptotic assumption \textbf{(C)} holds true with $\alpha = 2$. {Also, \eqref{eq:psi_inf} is satisfied in both cases of (i).}

As usual, to establish the upper bound we apply our general Theorem
\ref{thm:upper}. When $A \equiv 0$, then from Lemma \ref{lem:density_estimate} below we get $p(t,x)\leq c_1 t^{\delta_2/2}|x|^{-d-\delta_2}$, $x \in \R^d \setminus \left\{0\right\}$, $t\geq t_1,$ for some $t_1>0$, so that \textbf{(U)} holds with {$F(r) = r^{-\delta_2}$} and $\gamma = \delta_2/2$. When $\inf_{|\xi|=1} \xi \cdot A \xi > 0$, then the same is true by Proposition \ref{prop:gauss_est}. Also, by Theorem \ref{th:Okura}, (\ref{eq:IDS_upper}) holds true with $\kappa_0=\rho(\lambda_{(2)})^{d/2}$ and the proof of the upper bound can completed by following the argument in the proof of the upper bound in Theorem \ref{thm:polynomial_irr} above, with the function
(for large $r$)
$$
f_{F,2,\kappa}(r)= \left(\delta_2+\frac{d}{2}\right)\left(\frac{d}{\kappa}\right)^{2/d} \, (\log r)^{\frac{2+d}{d}}.
$$

To get the lower bound, it is enough to observe that by {Proposition \ref{prop:dirichlet-by-nu}} and the  bound on the density $\nu(x)$
we have {$G(2,R)\geq c_2 R^{-d-\delta_1}$ }for large $R$. Indeed, the rest of the proof follows the lines of the second part of the justification of the lower bound in Theorem \ref{thm:polynomial_irr} with  profile function {$F(r) = r^{-\delta_1}$}.
\end{proof}

To complete the proof of the above theorem we need to prove the following lemma.

\begin{lemma}\label{lem:density_estimate}
Let $X$ be a symmetric L\'evy process with characteristic exponent $\psi$ as in \eqref{eq:Lchexp},
with defining parameters $A \equiv 0$ and $\nu({\rm d}x)=\nu(x){\rm d}x$, such that
{$\liminf_{|\xi| \to \infty} \frac{\psi(\xi)}{\log |\xi|} > 0$}. If there exist {$C_{12}-C_{14} >0$} and
$\delta > 2$ such that $\nu(x) \leq {C_{12}} |x|^{-d-\delta}$ for $x \in \R^d \setminus \left\{0\right\}$
{and $\nu(x) \geq C_{13}$ \, for $|x| \leq C_{14}$}, then
then there exist {$C_{15}, t_1>0$}
such that
\begin{align*}\label{eq:est-dens-1}
 p(t,x)\leq {C_{15}} \frac{t^{\frac{\delta}{2}}}{|x|^{d+\delta}}, \quad x \in \R^d \setminus \left\{0\right\}, \ t>t_1.
\end{align*}
\end{lemma}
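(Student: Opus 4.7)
My plan is to prove two complementary bounds valid for $t\geq t_1$ (sufficiently large): (a) a near-diagonal Gaussian-type estimate $p(t,x)\leq c_1 t^{-d/2}$, and (b) a polynomial tail estimate $p(t,x)\leq c_2 t|x|^{-d-\delta}$ for $|x|$ large. The conclusion then follows by a simple interpolation: when $|x|^{2}\leq t$ one has $|x|^{d+\delta}\leq t^{(d+\delta)/2}$, turning (a) into $c_1 t^{\delta/2}/|x|^{d+\delta}$; when $|x|^{2}\geq t\geq 1$, since $\delta\geq 2$ we have $t\leq t^{\delta/2}$, so (b) delivers the claim directly (the regime of small $|x|$ is automatic as $|x|^{-d-\delta}\to\infty$).

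For (a), I would apply Fourier inversion, $p(t,x)\leq p(t,0)=(2\pi)^{-d}\int_{\mathbb R^d} e^{-t\psi(\xi)}d\xi$, and split at $|\xi|=1$. The hypothesis $\nu(x)\geq C_{13}$ on $|x|\leq C_{14}$ forces the matrix $\widetilde A$ from \eqref{eq:lambda_for_A_nu} to be strictly positive definite, so Proposition \ref{prop:suff_for_Okura}(i), combined with the continuity and strict positivity of $\psi$ away from $0$, gives $\psi(\xi)\geq c|\xi|^{2}$ on $|\xi|\leq 1$, whence $\int_{|\xi|\leq 1}e^{-t\psi(\xi)}d\xi\leq c't^{-d/2}$. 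The Hartman--Wintner-type lower bound $\psi(\xi)\geq c''\log|\xi|$ on $|\xi|\geq 1$ (which is the pointwise form of the assumption $\liminf_{|\xi|\to\infty}\psi(\xi)/\log|\xi|>0$) makes $\int_{|\xi|>1}e^{-t\psi(\xi)}d\xi\leq\int_{|\xi|>1}|\xi|^{-c''t}d\xi$ decay super-polynomially in $t$ and thus negligible compared with $t^{-d/2}$.

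For (b), my plan is to use the L\'evy--It\^o decomposition $X_t=Y_t+Z_t$, where $Z$ is the compound Poisson process driven by the large-jump part $\nu_b:=\nu\mathbf{1}_{\{|y|>1\}}$ (of finite total mass $\lambda$ by the polynomial tail) and $Y$ is the independent L\'evy process carrying the small jumps $\nu_s:=\nu\mathbf{1}_{\{|y|\leq 1\}}$. The density then admits the series expansion
\[
p(t,x)=\sum_{n\geq 0}e^{-\lambda t}\frac{t^{n}}{n!}(p_Y(t,\cdot)*\nu_b^{*n})(x).
\]
Because $Y$ has bounded jumps, exponential Chebyshev gives $p_Y(t,z)\lesssim e^{-|z|/c}$ for $|z|$ large, so the $n=0$ term decays super-polynomially in $|x|$. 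For the $n=1$ term, splitting the convolution integral at $|y|=|x|/2$ and using the upper bound $\nu(y)\leq C_{12}|y|^{-d-\delta}$ yields $(p_Y(t,\cdot)*\nu_b)(x)\leq c_3|x|^{-d-\delta}$ for $|x|$ large. The contribution of the $n\geq 2$ terms is controlled via the ``principle of a single big jump'' for convolutions of regularly varying distributions, summing to $O(t|x|^{-d-\delta})$. A quicker alternative is to invoke the sharp pointwise upper heat-kernel estimates of \cite{bib:KS14}, which under our hypotheses directly give $p(t,x)\leq c(t^{-d/2}\wedge t\nu(x))$ for $t\geq t_1$ and all $x$, delivering (a) and (b) simultaneously.

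The main obstacle is (b): clean polynomial control of the Meyer series requires that each convolution $\nu_b^{*n}$ preserves a tail of order $|y|^{-d-\delta}$ up to a factor linear in $n$, which is precisely the regular-variation property guaranteed by $\nu(x)\leq C_{12}|x|^{-d-\delta}$. Citing \cite{bib:KS14} bypasses this classical but delicate tail analysis entirely.
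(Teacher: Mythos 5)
Your step (b) is where the argument breaks down. The intermediate claim $p(t,x)\le c_2\,t\,|x|^{-d-\delta}$ for all $|x|\ge\sqrt t$ cannot hold once $\delta>2$: since $\nu$ has finite second moment and is nondegenerate near the origin, the process has Gaussian scaling, and for $|x|\asymp\sqrt t$ the kernel is of on-diagonal size $t^{-d/2}$ (local CLT; take e.g.\ a compactly supported $\nu$ satisfying the hypotheses), whereas $t|x|^{-d-\delta}\asymp t^{(2-d-\delta)/2}=o(t^{-d/2})$. The concrete flaw in your series argument is the bound ``$p_Y(t,z)\lesssim e^{-|z|/c}$ for $|z|$ large'': for a L\'evy process with bounded jumps this is valid only for $|z|\gtrsim t$ (or with $t$-dependent constants), while at $|z|\asymp\sqrt t$ one has $p_Y(t,z)\asymp t^{-d/2}$; the same Gaussian-bulk contribution re-enters the $n\ge 2$ terms (no single jump exceeds $|x|/2$, but the diffusive fluctuation reaches $x$), so the single-big-jump summation cannot produce $O(t|x|^{-d-\delta})$ there. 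Consequently your two-regime interpolation leaves a hole: (a) covers only $|x|\lesssim\sqrt t$, while (b) can only be true where $t|x|^{-d-\delta}$ dominates the Gaussian profile $t^{-d/2}e^{-c|x|^2/t}$, i.e.\ roughly $|x|\gtrsim\sqrt{t\log t}$, and in the window in between neither bound yields $t^{\delta/2}|x|^{-d-\delta}$. What is needed there is an upper bound of \emph{additive} form, $p(t,x)\le c\big(t|x|^{-d-\delta}+t^{-d/2}e^{-c'(|x|/\sqrt t)\log(1+c'|x|/\sqrt t)}\big)$, after which the exponential--logarithmic term is $\le c\,t^{\delta/2}|x|^{-d-\delta}$ in both regimes $|x|\ge\sqrt t$ and $|x|\le\sqrt t$. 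This is precisely the paper's route: it verifies assumptions (1)--(3) of \cite[Theorem 1]{bib:KS13} (assumption (3) being $\int e^{-t\psi(\xi)}|\xi|\,{\rm d}\xi\le c\,t^{-(d+1)/2}$ for large $t$, proved much as your part (a)), and then carries out this last comparison.

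Your proposed shortcut fails for the same reason: the estimate $p(t,x)\le c\,(t^{-d/2}\wedge t\nu(x))$ is false in the window $|x|\asymp\sqrt t$ when $\delta>2$ (there the minimum equals $t\nu(x)\ll t^{-d/2}$, below the true Gaussian size), and \cite{bib:KS14} is a small-time result; the relevant large-time reference is \cite{bib:KS13}, whose upper bound carries the additive exp-log correction rather than a minimum. A smaller, repairable point in (a): as written, $\int_{|\xi|>1}|\xi|^{-c''t}\,{\rm d}\xi$ is only $O(1/t)$, not super-polynomially small, which is insufficient against $t^{-d/2}$ for $d\ge 3$; split instead at some radius $r_2>1$ and use $\inf_{1\le|\xi|\le r_2}\psi>0$ (which your positivity argument already provides) to make the high-frequency part exponentially small in $t$ --- unlike (b), this part of your plan is easily fixed.
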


\begin{proof}
The assumption {$\liminf_{|\xi| \to \infty} \frac{\psi(\xi)}{\log |\xi|} > 0$ immediately gives that $e^{-t\psi(\cdot)} \in L^1(\R^d)$, for sufficiently large} $t>0$, and also that $\nu(\R^d \backslash \left\{0\right\}) = \infty$. Thus, by the Fourier inversion formula, $p(t,x)$ exist and are bounded for all $t$ {large enough}. To find an upper bound on $p(t,x)$, we use \cite[Theorem 1]{bib:KS13}. Its assumption (1) follows directly from the upper bound on the density $\nu(x)$ with the profile function $f(r) = r^{-d-\delta}$ and (2) can be directly derived from the monotonicity and the doubling property of such $f$. Indeed,  for every $s, r >0$ we may write
$$
\int_{|y|>r} f\left(s \vee |y| - \frac{|y|}{2}\right) \nu(y)dy
= \left(\int_{|y|>r \atop |y| \leq s} + \int_{|y|>r \atop |y| > s}\right) f\left(s \vee |y| - \frac{|y|}{2}\right) \nu(y)dy =: I_1 + I_2.
$$
Since $f$ is nonincreasing, both integrals $I_1$ and $I_2$ can be easily estimated by $f(s/2) \int_{|y|>r} \nu(y)dy$, which is smaller or equal to $c_1 f(s) \Psi(1/r)$, for all $s, r >0$. Thus the assumption (2) holds true.

It remains to justify the last assumption (3). First note that by the upper estimate of the density $\nu(x)$ and Proposition \ref{prop:suff_for_Okura} (i) one has
$$
\psi(\xi) = \xi \cdot \widetilde A \xi + o(|\xi|^2), \quad \text{with \ $\widetilde A = (\widetilde a_{ij})_{1 \leq i,j\leq d}$,
\ where \ $a_{ij}= \frac{1}{2} \int_{\R^d} y_i y_j \nu(y)dy$}.
$$
Since $\nu(x)$ is {separated from zero} around the origin, this together with {$\liminf_{|\xi| \to \infty} \frac{\psi(\xi)}{\log |\xi|} > 0$,} imply that there exist $0 < r_1 < 1 < r_2$ such that
$$
c_2 |\xi|^2\leq \psi(\xi) \leq c_3 |\xi|^2 \quad \text{for} \quad |\xi| \leq r_1 \quad \text{and} \quad \psi(\xi) \geq c_4 \log |\xi| \quad \text{for} \quad |\xi| \geq r_2,
$$
with some constants $c_2,..., c_4  >0$. The above bounds and the fact that $\inf_{r_1 \leq |\xi| \leq r_2} \psi(\xi) > 0$ immediately give that  for every $t > 0$ we may write
\begin{align*}
\int_{\R^d} {\rm e}^{-t \psi(\xi)} |\xi| d \xi
\leq \int_{|\xi| < r_1} {\rm e}^{-c_2 t |\xi|^2} |\xi| d \xi
 + {\rm e}^{-t \inf_{r_1\leq|\xi| \leq r_2} {\psi(\xi)}} \int_{r_1 \leq |\xi| \leq r_2} |\xi| d \xi +\int_{|\xi| > r_2} e^{-c_4 t \log |\xi|} |\xi| d \xi.
\end{align*}
When $t>\frac{d+1}{c_4},$ then the last integral is convergent, and, moreover, we get that there exists $c_5>0$ such that the estimate
\[
\int_{\R^d} {\rm e}^{-t \psi(\xi)} |\xi| d \xi\leq c_5 t^{-\frac{d+1}{2}}\] holds for large $t>0$.
Since $\Psi^{-1}(1/t) \asymp t^{-1/2}$ for sufficiently large $t$, this in fact gives that there is $t_1 > 0$
such that the assumption (3) in \cite[Theorem 1]{bib:KS13} holds with $T=[t_1,\infty)$. Consequently we have, with $h(t)\asymp t^{1/2}$ and $\gamma =d,$
(due to the symmetry of the process the correction term $tb_{h(t)}$ is not present):
$$
p(t,x) \leq c_6 \left(\frac{t}{|x|^{d+\delta}} + t^{-d/2} e^{-c_7 \frac{|x|}{t^{1/2}} \log\left(1+ c_7 \frac{|x|}{t^{1/2}}\right)}\right),
\quad x \in \R^d \setminus \left\{0\right\}, \ t \geq t_1.
$$
We estimate the exponentially-logarithmic term above. If $|x| \geq t^{1/2}$, then
$$
t^{-d/2} e^{-c_{7}\frac{|x|}{t^{1/2}} \log \left(1+c_{7}\frac{|x|}{t^{1/2}}\right)}
\leq c_{8} t^{-d/2} (t^{1/2}/|x|)^{d+\delta} = c_{8} t^{\delta/2}/|x|^{d+\delta},
$$
for a constant $c_{8} >0$. On the other hand, for $|x| \leq t^{1/2}$ we simply have
$$
t^{-d/2} e^{-c_{7}\frac{|x|}{t^{1/2}} \log \left(1+c_{7}\frac{|x|}{t^{1/2}}\right)}
\leq t^{-d/2} \leq t^{-d/2}(t^{1/2}/|x|)^{d+\delta} =  t^{\delta/2}/|x|^{d+\delta}.
$$
Altogether,
$$
p(t,x) \leq c_{9} \frac{t^{\frac{\delta}{2}}}{|x|^{d+\delta}}, \quad x \in \R^d \setminus \left\{0\right\}, \ t \geq t_1.
$$
The statement follows.
\end{proof}

\begin{remark}{\rm

\noindent

Similarly as before, in the final part of the proof of Theorem \ref{thm:polynomial} above we could get a better bound. Indeed, by choosing $\kappa=a\kappa_0$ and optimizing over $a>0$, one has
$$
\liminf_{t\to\infty}\frac{\log u^\omega(t,x)}{t^{\frac{d}{d+2}}}
\geq
- (\delta_1+4d)^{\frac{2}{d+2}}\left(
\left(\frac{d}{2}\right)^{\frac{2}{d+2}}+\left(\frac{2}{d}
\right)
^{\frac{d}{d+2}}\right)
\left(\frac{\rho\omega_d}{d}\right)^{\frac{2}{d+2}}\left(\lambda_1^{(2)}(B(0,1))
\right)^{\frac{d}{d+2}},  \quad \qpr-\mbox{a.s.}
$$
}
\end{remark}

\begin{example} {\rm {\emph{\textbf{(Layered stable process)}}} \label{ex:polynomial}

\noindent
Our Theorem \ref{thm:polynomial} above can be illustrated by considering a symmetric L\'evy process with L\'evy measure $\nu({\rm d}x)=\nu(x){\rm d}x$ such that $\nu(x) \asymp \1_{\left\{|x| \leq 1 \right\}} |x|^{-d-\eta} +  \1_{\left\{|x| > 1 \right\}}  |x|^{-d-\delta}$, $x \in \R^d \backslash \left\{0\right\}$, with $\eta \in (0,2)$ and $\delta>2$. Such processes are often called \emph{layered stable}.}
\end{example}

All the  results above say that when $\psi(\xi)=\psi^{(\alpha)}(\xi) +o(|\xi|^\alpha),$ $\alpha\in (0,2],$
$\xi\to 0,$ and the L\'{e}vy measure decays polynomially at infinity,
then the quenched rate of  convergence
of $u^\omega(t,x)$ as $t\to\infty$ is the same as its annealed rate
of convergence (cf. (\ref{eq:dv})).

\subsection{Processes with L\'evy measures lighter than polynomial at infinity}
\label{subsec:abs_fast}

\noindent
We now show that when the tail of $\nu$ at infinity is
lighter than polynomial, then the almost sure behaviour qualitatively changes and it is no longer true
that it coincides with the annealed behaviour. In this case we are often able to get the convergence, i.e.
to derive precisely the main term in the asymptotics.

First we discuss the example of a L\'evy measure which decays at infinity faster than polynomially but still slower than {stretched-exponentially.

\begin{theorem} \label{thm:exp_log}
Let $X$ be a symmetric L\'evy process with characteristic exponent $\psi$ as in \eqref{eq:Lchexp} with  Gaussian coefficient $A = (a_{ij})_{1 \leq i, j \leq d}$ such that either  $A \equiv 0$ or $\inf_{|\xi|=1} \xi \cdot A \xi > 0,$ and a symmetric L\'evy measure $\nu({\rm d}x)=\nu(x){\rm d}x$
such that there exist $\theta>0$, $\beta >1$, $\delta \in (0,2)$ satisfying
\begin{align} \label{eq:profile_exp_log_1}
\nu(x) \asymp \1_{\left\{|x| \leq 1 \right\}} |x|^{-d-\delta} +  \1_{\left\{|x| > 1 \right\}} e^{-\theta(\log|x|)^{\beta}}, \quad x \in \R^d \backslash \left\{0\right\}.
\end{align}
Let $V^{\omega}$ be a Poissonian potential with bounded, compactly supported, nonnegative and nonidentically zero profile $W$.
Then, for any fixed $x \in \R^d$, one has
\begin{align*}
\limsup_{t \to \infty} \frac{\log u^{\omega}(t,x)}{t^{\frac{\beta d}{2+d \beta}}} \leq - \, \theta^{\frac{2}{2 + d\beta}}\left(\frac{\rho}{d}\right)^{\frac{2 \beta }{2 + d \beta}} \left(\lambda_{(2)}\right)^{\frac{d \beta }{2 + d \beta}}, \quad \qpr-\text{a.s.},
\end{align*}
and
\begin{align*}
\liminf_{t\to\infty}\frac{\log u^\omega(t,x)}{t^{\frac{\beta d}{2+d \beta}}}\geq
- 2 \theta^{\frac{2}{2 + d\beta}}\left(\frac{\omega_d \rho}{d}\right)^{\frac{2 \beta }{2 + d \beta}} \left(\lambda_1^{(2)}(B(0,1))\right)^{\frac{d \beta }{2 + d \beta}} , \quad \qpr-\text{a.s.},
\end{align*}
where $\lambda_1^{(2)}(B(0,1))$ and $\lambda_{(2)}$ correspond to the diffusion process with Gaussian matrix $\widetilde A$ given by \eqref{eq:lambda_for_A_nu}.
\end{theorem}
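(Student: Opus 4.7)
The approach is to apply Theorem \ref{thm:upper} and Theorem \ref{thm:lower_bound} with $\alpha=2$, and then carefully match the constants. First, since $\int |y|^{2} e^{-\theta(\log|y|)^{\beta}}\,dy < \infty$ for $\beta>1$, Proposition \ref{prop:suff_for_Okura}(i) gives condition \textbf{(C)} with $\alpha=2$ and $\psi^{(2)}(\xi)=\xi\cdot\widetilde A\xi$ for the matrix $\widetilde A$ from \eqref{eq:lambda_for_A_nu}; the polynomial small-jump profile forces $\psi_{\nu}(\xi)\asymp |\xi|^{\delta}$ at infinity when $A\equiv 0$, while the nondegenerate Gaussian component yields quadratic growth otherwise, so in either case \eqref{eq:psi_inf} holds. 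Theorem \ref{th:Okura} then provides \eqref{eq:IDS_upper} with $\kappa_{0}=\rho(\lambda_{(2)})^{d/2}$, and Proposition \ref{prop:conditionA} yields \eqref{eq:lower-1} with any $K>1$.

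For the upper bound, I would verify \textbf{(U)} with profile $F(r)=e^{-\theta'(\log r)^{\beta}}$, for an arbitrary fixed $\theta'<\theta$, by combining the sharp transition density upper estimate available for intermediate-tail L\'evy processes (Lemma \ref{lem:tran_dens_ub}, giving $p(t,x)\lesssim t^{\gamma} e^{-\theta(\log|x|)^{\beta}}$ in the regime $|x|\geq Ct$), integrating over $\{|x|>r\}$, and absorbing the optional Gaussian component via Proposition \ref{prop:gauss_est} (for $r\geq C t$ one has $e^{-cr}\ll F(r)$). Since $\beta>1$ forces $|\log F(r)|\ll r$, the function from \eqref{eq:basicf} behaves like
\[
f_{F,2,\kappa_{0}}(r)\sim \theta'\left(\frac{d}{\kappa_{0}}\right)^{2/d}(\log r)^{\beta+2/d},
\]
so that the inverse $h(t)=h_{F,2,\kappa_{0}}(t)$ satisfies
\[
\log h(t)\sim \left(\frac{t}{\theta'}\right)^{d/(d\beta+2)}\!\!\left(\frac{\kappa_{0}}{d}\right)^{2/(d\beta+2)}, \quad g(t)\sim (\theta')^{2/(d\beta+2)}\left(\frac{\kappa_{0}}{d}\right)^{-4/(d(d\beta+2))} t^{d\beta/(d\beta+2)}.
\]
A direct check shows $(\log h(t))/g(t)\to 0$ (again because $\beta>1$), so the $\tfrac{d}{2}\log h$ correction in \eqref{eq:statement-upper} is $o(g(t))$. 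Applying Theorem \ref{thm:upper}, substituting $\kappa_{0}=\rho(\lambda_{(2)})^{d/2}$, and letting $\theta'\uparrow\theta$ through rationals yields, after an elementary simplification of exponents, the claimed $\limsup$-bound.

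For the lower bound, I apply Proposition \ref{prop:dirichlet-by-nu}, whose hypothesis holds thanks to the small-scale estimate $\nu(y)\geq c_{0}|y|^{-d-\delta}$. Combined with the asymptotic $\nu(B(y,r_{0}))\asymp r_{0}^{d} e^{-\theta(\log|y|)^{\beta}}$ for large $|y|$ (uniform by monotonicity from \eqref{eq:profile_exp_log_1}), this yields $|\log G(R_{0},R)|\leq \theta(\log R)^{\beta}(1+o(1))$. Taking $F(r)=e^{-\theta(\log r)^{\beta}}$ in Theorem \ref{thm:lower_bound}, Corollary \ref{cor:lower} applies with $Q_{1}=\infty$ (since $\beta>1$) and $Q_{2}=1$ (since both $|\log G(R_{0}, 2\sqrt d\,h/(\log h)^{2/d+2})|$ and the denominator in \eqref{eq:B_lower} are asymptotic to $\theta(\log h)^{\beta}$), giving
\[
\liminf_{t\to\infty}\frac{\log u^{\omega}(t,x)}{g(t)}\geq -\left(\frac{\omega_{d}\rho}{d}\right)^{2/d}\lambda_{1}^{(2)}(B(0,1))-\left(\frac{\kappa}{d}\right)^{2/d}.
\]
Choosing $\kappa=\omega_{d}\rho\bigl(\lambda_{1}^{(2)}(B(0,1))\bigr)^{d/2}$ equalizes the two summands, and converting from $g(t)$ to $t^{d\beta/(d\beta+2)}$ via the explicit $\kappa$-dependence of $g$ produces exactly the factor of $2$ and the exponent combinations claimed.

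The main obstacle is producing a sharp enough profile $F$ in \textbf{(U)} whose logarithm is $(1+o(1))\theta(\log r)^{\beta}$. Pruitt's elementary bound $\pr_{0}(\sup_{s\leq t}|X_{s}|>r)\leq c t H(r)$ is insufficient here: because $\nu$ has finite second moment, $H(r)$ is dominated by the $r^{-2}$ term and would only produce the annealed rate $t^{d/(d+2)}$ rather than the quenched rate $t^{d\beta/(d\beta+2)}$. One genuinely needs the pointwise transition density estimate that exploits the sub-exponential decay of $\nu$ at infinity, such as the one supplied by \cite{bib:KS14}; this is also the technical reason why Theorem \ref{thm:exp_log} is qualitatively separated from the polynomial-decay regime of Theorem \ref{thm:polynomial}.
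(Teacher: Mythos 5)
Your proposal is correct and follows essentially the same route as the paper's proof: condition \textbf{(C)} via Proposition \ref{prop:suff_for_Okura} and Theorem \ref{th:Okura} for the IDS, the density estimate of Lemma \ref{lem:tran_dens_ub} (with Proposition \ref{prop:gauss_est} for the nondegenerate Gaussian case) to verify \textbf{(U)}, the asymptotics of $h$ and $g$ fed into Theorem \ref{thm:upper}, and on the lower side Propositions \ref{prop:conditionA} and \ref{prop:dirichlet-by-nu} with Corollary \ref{cor:lower} ($Q_1=\infty$, $Q_2=1$), $K\downarrow 1$, and the choice $\kappa=\rho\omega_d\bigl(\lambda_1^{(2)}(B(0,1))\bigr)^{d/2}$ yielding the factor $2$. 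The only cosmetic deviation is your lossy profile $F(r)=e^{-\theta'(\log r)^{\beta}}$ with $\theta'\uparrow\theta$ at the end, where the paper keeps the exact integrated profile $e^{-\theta(\log(c_3 r))^{\beta}}r^d(\log c_3 r)^{-(\beta-1)}$ and notes that the polynomial corrections are asymptotically irrelevant.
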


\begin{proof}
First of all we observe that indeed by Proposition \ref{prop:suff_for_Okura} (i) one has
$$
\psi(\xi) = \xi \cdot \widetilde A \xi + o(|\xi|^2),
$$
i.e. the basic asymptotic assumption \textbf{(C)} holds true. By \cite[Lemma 5 (a)]{bib:KS14}, we have $\Psi_{\nu}(x) \asymp \psi_{\nu}(|x|)$, $x \in \R^d$, where $\Psi_{\nu}$ is the symmetrization of $\psi_{\nu}$ introduced in in \eqref{eq:Lchexpprof}. From \eqref{eq:PruitH} we thus have
\begin{align} \label{eq:exp_log_eq1}
\psi(x) \geq \psi_{\nu}(x) \geq c_1 \int_{|y| > 1/|x|} \nu(y) dy \asymp {|x|^{\delta}}, \quad \text{for \ $|x|$ \ large enough}.
\end{align}
In particular, \eqref{eq:psi_inf} holds true. We now address the upper bound and the lower bound separately.

\textsc{The upper bound}.
Similarly as before, we use our general Theorem \ref{thm:upper}. First we need to check that \textbf{(U)} holds true.
When $A \equiv 0$, then it follows from Lemma \ref{lem:tran_dens_ub} below
that there exist $c_2 >0$, $c_3 \in (0,1/4]$, $r_0, t_1 >0$ such that
\begin{align} \label{eq:exp_log_eq2}
{p(t,x)}\leq c_2 t e^{- \theta (\log(c_3 |x|))^{\beta}}, \quad |x|>r_0 \vee t, \ t\geq t_1,
\end{align}
which implies \textbf{(U)} with $\gamma=1$ and $F(r)={e^{- \theta (\log(c_3 r))^{\beta}} r^d (\log c_3 r)^{-(\beta-1)}}$ (clearly, such profile $F$ is strictly increasing for $r > \widetilde r_0$ with sufficiently large $\widetilde r_0 \geq r_0$). When $\inf_{|\xi|=1} \xi \cdot A \xi > 0$, then we get from Proposition \ref{prop:gauss_est} that the same is true with the profile function $\widetilde F(r) = e^{-\theta (\log(\widetilde c_3 r))^{\beta}} r^d (\log \widetilde c_3 r)^{-(\beta-1)}$ for some $\widetilde c_3 \in (0,c_3)$ and  same $\gamma$. Thanks to \eqref{eq:exp_log_eq1} and \textbf{(C)}, by Theorem \ref{th:Okura} we also have
$$
\lim_{\lambda\to 0} \lambda^{d/2}\log N^D(\lambda)=-\rho (\lambda_{(2)})^{d/2},
$$
where $\lambda_{(2)}$ is determined by the variational formula \eqref{eq:lambda} with $\lambda_1^{(2)}(G)$ corresponding to the diffusion process with exponent $\psi^{(2)}(\xi) = \xi \cdot \widetilde A \xi$.

We are now in a position to derive the claimed upper bound. Indeed, with the preparation above, by our general Theorem \ref{thm:upper} and Corollary \ref{cor:upper}, we get
\begin{align} \label{eq:exp_log_eq3}
\limsup_{t \to \infty} \frac{\log u^{\omega}(t,x)}{g(t)} \leq -\left(\frac{\rho}{d}\right)^{2/d} \lambda_{(2)}, \quad \qpr-\text{a.s.},
\end{align}
with $g(t)= t/(\log h_{F,\alpha,\kappa_0}(t))^{2/d}$, where $h_{F,\alpha,\kappa_0}$ is the inverse function to $f_{F,\alpha,\kappa_0}$ given by \eqref{eq:basicf} with $\alpha = 2$, $\kappa_0 = \rho (\lambda_{(2)})^{d/2}$ and $F(r) =  e^{- \theta(\log(c_3 r))^{\beta}}  r^d (\log c_3 r)^{-(\beta-1)}$ (as we will see below, here the concrete value of $c_3$ is irrelevant).
By \eqref{eq:basicf}, since $\beta>1,$  we have
$$
f_{F,2,\kappa_0}(r) \approx \left(\theta (\log(c_3r))^{\beta} + \frac{d}{2} \log r \right) \left(\frac{d \log r}{\kappa_0}\right)^{\frac{2}{d}} \approx \theta(\log r)^\beta
\left(\frac{d \log r}{\kappa_0}\right)^{\frac{2}{d}}, \quad \text{for large \ $r$.}
$$
Thus, by direct asymptotic calculations, we obtain that
$$
\log h_{F,\alpha,\kappa_0}(t) \approx  \left(\frac{1}{\theta}\right)^{\frac{d}{2 + d\beta}} \left(\frac{\kappa_0}{d}\right)^{\frac{2}{2 + d\beta}} t^{\frac{d}{2+d\beta}},
$$
and consequently,
\begin{align} \label{eq:exp_log_eq4}
g(t) \approx \left(\frac{1}{{\theta}} \left(\frac{\kappa_0}{d}\right)^{\frac{2}{d}}\right)^{-\frac{2}{2 + d \beta}} t^{\frac{\beta d}{2+d\beta}}.
\end{align}
Since $\kappa_0 = \rho (\lambda_{(2)})^{d/2}$, in light of \eqref{eq:exp_log_eq3}, this gives
$$
\limsup_{t \to \infty} \frac{\log u^{\omega}(t,x)}{t^{\frac{\beta d}{2+d \beta}}} \leq - \, {\theta}^{\frac{2}{2 + d\beta}}\left(\frac{\rho}{d}\right)^{\frac{2 \beta }{2 + d \beta}} \left(\lambda_{(2)}\right)^{\frac{d \beta }{2 + d \beta}}. \quad \qpr-\text{a.s.},
$$

\smallskip

\noindent\textsc{The lower bound}.
First recall that at the beginning of the proof we verified the basic asymptotic assumption \textbf{(C)}. In view of Proposition \ref{prop:conditionA} it gives that the assumptions of our general Theorem \ref{thm:lower_bound} (and Corollary \ref{cor:lower} as well) are satisfied with any $K>1$. To match the asymptotic profile from the upper bound, it is enough to take $F(r) = e^{- \theta(\log r)^{\beta}}$. Similarly as before, we first proceed with an arbitrary fixed $\kappa>0,$ and in the concluding part of the proof we will choose a suitable $\kappa$. Condition \eqref{eq:ass1} of  Proposition \ref{prop:dirichlet-by-nu} is satisfied, so that  we also have that there exists $c_4>0$ such that $G(1,R)\geq c_4 e^{- \theta(\log(R/2))^{\beta}}$ for  sufficiently large $R$.

We now verify the assumptions of Corollary \ref{cor:lower}. First observe that one has $Q_1 = \infty$ in \eqref{eq:A_lower}. Moreover, since
$$
\limsup_{r \to \infty} \frac{\left|\log G\left(1,\frac{2\sqrt d r}{(\log r)^{\frac{2}{d}+2}}\right)\right|}{r\wedge|\log F(r)| + (d/2) \log r} \leq 1,
$$
we also have $Q_2=1$ in \eqref{eq:B_lower}. By \eqref{eq:statement-upper:A}, this yields that for any fixed $\kappa>0$ and $x \in \R^d$
$$
\liminf_{t \to \infty} \frac{\log u^{\omega}(t,x)}{g(t)} \geq -K\left(\frac{\omega_d\rho}{d}\right)^{2/d}\lambda_1^{(2)}(B(0,1)) -\left(\frac{\kappa}{d}\right)^{2/d}, \quad \qpr-\text{a.s.}
$$
Recall that here $\lambda_1^{(2)}(B(0,1))$ corresponds to the diffusion process determined by $\psi^{(2)}(\xi) = \xi \cdot \widetilde A \xi$ with $\widetilde A$ as in \eqref{eq:lambda_for_A_nu}. In light of \eqref{eq:exp_log_eq4}, passing to the limit $K\downarrow 1$ through rationals, we finally get
\[\liminf_{t\to\infty}\frac{u^\omega(t,x)}{t^{\frac{\beta d}{2+d \beta}}}\geq
- \, \theta^{\frac{2}{2 + d\beta}} \left(\frac{1}{d}\right)^{\frac{2 \beta }{2 + d \beta}} \left(\left(\omega_d \, \rho \, \kappa^{\frac{-2}{2 + d \beta}}\right)^{2/d} \lambda_1^{(2)}(B(0,1)) + \kappa^{\frac{2 \beta }{2 + d \beta}}\right) , \quad \qpr-\text{a.s.}
\]
Again, to match the upper bound, we take $\kappa= \rho\omega_d\left(\lambda_1^{(2)}(B(0,1))\right)^{\frac{d}{2}}$. With this choice of $\kappa$  we conclude that
$$
\liminf_{t\to\infty}\frac{u^\omega(t,x)}{t^{\frac{\beta d}{2+d \beta}}}\geq
- 2 \theta^{\frac{2}{2 + d\beta}}\left(\frac{\omega_d \rho}{d}\right)^{\frac{2 \beta }{2 + d \beta}} \left(\lambda_1^{(2)}(B(0,1))\right)^{\frac{d \beta }{2 + d \beta}}, \quad \qpr-\text{a.s.},
$$
which completes the proof.
\end{proof}

We now justify the upper bound \eqref{eq:exp_log_eq2}, which is one of the key steps in the proof of the above theorem.

\begin{lemma} \label{lem:tran_dens_ub}
Let $X$ be a symmetric L\'evy process with characteristic exponent $\psi$ as in \eqref{eq:Lchexp} with $A \equiv 0$ and $\nu({\rm d}x)=\nu(x){\rm d}x$
such that there exist $\theta>0$, {$\beta >1$}, $\delta \in (0,2)$ satisfying
\begin{align} \label{eq:profile_exp_log_2}
\nu(-x) = \nu(x) \asymp \1_{\left\{|x| \leq 1 \right\}} |x|^{-d-\delta} +  \1_{\left\{|x| > 1 \right\}} e^{-\theta(\log|x|)^{\beta}}, \quad x \in \R^d \backslash \left\{0\right\}.
\end{align}
Then there exist {$C_{16} > 0$}, $r_0> 0$ and $t_1 >0$ such that
\begin{align*}
p(t,x)\leq {C_{16}} t e^{- \theta \left(\log \left(\frac{|x|}{4}\right)\right)^{\beta}}, \quad |x|>r_0 \vee t, \ t\geq t_1.
\end{align*}
\end{lemma}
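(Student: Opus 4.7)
The strategy is to specialize the general transition density upper bound of Kaleta--Sztonyk \cite{bib:KS14} (in the spirit of Lemma \ref{lem:density_estimate}) to the dominating profile
\[
f(r) = \1_{\{r \leq 1\}}\,r^{-d-\delta} + \1_{\{r > 1\}}\,e^{-\theta(\log r)^{\beta}}, \quad r > 0.
\]
First I would check that $p(t,x)$ exists as the Fourier inverse of $e^{-t\psi}$ for every $t>0$. This follows from the lower bound $\psi(\xi) \geq c|\xi|^{\delta}$ for large $|\xi|$ (which in turn follows from \eqref{eq:exp_log_eq1} applied to the pure jump part, combined with nonnegativity of $\psi_A$), giving $e^{-t\psi} \in L^{1}(\R^{d})$.

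Second, I would verify the structural hypotheses needed to invoke the general bound. The key condition is of the form
\[
\int_{|y| > r} f\!\left(s \vee |y| - \tfrac{|y|}{2}\right) \nu(y)\,dy \leq C\, f(s)\,\Psi(1/r), \quad s,r > 0,
\]
with $\Psi$ as in \eqref{eq:Lchexpprof}. As in the proof of Lemma \ref{lem:density_estimate}, one splits according to whether $|y| \leq s$ or $|y| > s$ and uses the monotonicity of $f$; here the key additional observation is that although the subexponential profile $f(r) = e^{-\theta(\log r)^{\beta}}$ lacks standard doubling (for $\beta>1$ one has $f(r/2)/f(r) \to \infty$), the growth rate is only $e^{O((\log r)^{\beta-1})}$, which is slower than any polynomial and can therefore be absorbed in the $\Psi(1/r)$ factor after adjusting the argument of $f$. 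The complementary integrability estimate $\int_{\R^{d}} e^{-t\psi(\xi)}|\xi|\,d\xi \leq c\, t^{-(d+1)/2}$ for large $t$ is derived exactly as in Lemma \ref{lem:density_estimate}, using $\psi(\xi) \gtrsim |\xi|^{\delta}$ at infinity.

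Third, the theorem yields a decomposition of the type
\[
p(t,x) \leq c_{1}\Bigl(t\,f(|x|) \;+\; t^{-d/2}\exp\!\bigl(-c_{2}(|x|/t^{1/2})\log(1 + c_{2}|x|/t^{1/2})\bigr)\Bigr), \quad x \neq 0,\ t \geq t_{1}.
\]
In the regime $|x| > r_{0} \vee t$ with $t \geq t_{1}$, the second (sub-Gaussian) term is bounded by $c_{3}\exp(-c_{2}\sqrt{|x|}\log\sqrt{|x|})$, which decays faster than $e^{-\theta(\log(|x|/4))^{\beta}}$ for any $\beta$. For the first term, since the logarithm is increasing, $f(|x|) = e^{-\theta(\log |x|)^{\beta}} \leq e^{-\theta(\log(|x|/4))^{\beta}}$, so $t\,f(|x|) \leq t\,e^{-\theta(\log(|x|/4))^{\beta}}$ directly. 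Combining the two contributions and absorbing constants into $C_{16}$ yields the claimed estimate.

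The main obstacle lies in Step~2: the non-standard asymptotic scaling of $f$ at infinity. For the polynomial case (Lemma \ref{lem:density_estimate}) doubling is immediate, but for $\beta > 1$ one must keep track of the $e^{O((\log r)^{\beta-1})}$ corrections propagating through the convolution estimate; the factor $4$ inside the logarithm in the target bound is precisely the buffer needed to reabsorb these corrections after invoking \cite{bib:KS14}, since shifting the argument from $|x|$ to $|x|/4$ improves $(\log r)^{\beta}$ by a term of order $(\log r)^{\beta-1}$ which dominates all accumulated submultiplicative losses.
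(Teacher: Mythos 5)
Your skeleton coincides with the paper's proof: invoke the general transition density upper bound of Kaleta--Sztonyk (in fact \cite[Theorem 1]{bib:KS13}, the same result used for Lemma \ref{lem:density_estimate}, not \cite{bib:KS14}), verify its hypotheses for the profile $f(r)=\1_{\{r\le1\}}r^{-d-\delta}+\1_{\{r>1\}}e^{-\theta(\log r)^{\beta}}$, and in the regime $|x|\ge t\ge t_1$ dominate the sub-Gaussian term by the profile term. Your treatment of existence of $p(t,\cdot)$, of the integrability condition (3), and of the final regime analysis matches the paper.

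The gap is in your Step 2, which is the real content of the lemma. First, your reason why the failure of doubling is harmless --- that the loss is ``only $e^{O((\log r)^{\beta-1})}$, which is slower than any polynomial'' --- is false for $\beta\ge 2$ (then $e^{c(\log r)^{\beta-1}}\ge r^{c}$), whereas the lemma covers all $\beta>1$. Second, and more seriously, the loss can be neither ``absorbed in the $\Psi(1/r)$ factor after adjusting the argument of $f$'' nor buffered by the factor $4$: hypothesis (2) of the cited theorem must be verified with $f(s)$ itself on the right-hand side, and the $|x|/4$ in the conclusion is simply how the cited bound is stated (it survives into the lemma only because $f$ is not doubling); it is not slack you can spend to repair the hypothesis. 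The actual mechanism, carried out in the paper, is to trade the loss against the decay of the \emph{other} factor $f(|y|)$ in the integrand: with $u_0=e^{\beta-1}$, the mean value theorem gives $(\log s)^{\beta}-(\log(s-u))^{\beta}\le \beta(\log u)^{\beta-1}$ for $u_0<u<s/2$, hence (enlarging $u_0$ if necessary) $(\log s)^{\beta}+\tfrac12(\log u)^{\beta}\le(\log(s-u))^{\beta}+(\log 2u)^{\beta}$, i.e.\ $f(s-u)f(2u)\le f(s)\,e^{-(\theta/2)(\log u)^{\beta}}$; the range $r<|y|<s$ with $|y|\le 2u_0$ is handled separately via $f(s-u_0)\le c\,f(s)$, and only the resulting integral $\int_{|y|>r}e^{-(\theta/2)(\log(|y|/2))^{\beta}}\,{\rm d}y$ is then compared with $\Psi(1/r)$. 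Without an argument of this kind, hypothesis (2) is not established and the appeal to the general theorem is unjustified; moreover, applying the theorem to an enlarged profile such as $f(\cdot/2)$ would only yield a bound with $|x|/8$ inside the logarithm, not the stated $|x|/4$.
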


\begin{proof}
We apply again \cite[Theorem 1]{bib:KS13}. First observe that the assumption (1) of this theorem holds for $f(r) = \1_{\left\{r \leq 1 \right\}} r^{-d-\delta} +  \1_{\left\{r > 1 \right\}} e^{-\theta(\log r)^{\beta}}$. We now justify its second assumption (2). As $\nu(x) \asymp f(|x|),$ we have to prove that there exists $c_1 >0$ such that
\begin{align} \label{eq:profile_est1}
\int_{|y|>r} f\big((s \vee |y|) - |y|/2 \big) f(|y|) dy \leq c_1 f(s) \Psi(1/r), \quad s,r >0,
\end{align}
where $\Psi$ is given by \eqref{eq:Lchexpprof} (see also \eqref{eq:PruitH} and comments after it). When $s \leq r$, then the integral on the left hand side can be directly estimated by $c_2 f(r) \int_{|y|>r} f(|y|/2)dy \leq c_2 f(s) \int_{|y|>r/2} f(|y|)dy  \leq c_3 f(s) \Psi(2/r) \leq c_4 f(s) \Psi(1/r)$, which is the claimed bound. When $s > r$, then we split this integral into two integrals: over $|y| \geq s$ and $r < |y| < s$, respectively. In the first case, we can follow exactly the same argument as above. Consequently, we only need to estimate the second integral $I_{s,r}:= \int_{r < |y| < s} f\big((s \vee |y|) - |y|/2 \big) f(|y|) {\rm d}y$. For $u_0={\rm e}^{\beta-1} $ we have
\begin{equation}\label{eq:uzero}
\frac{(\log u_0)^{\beta-1}}{u_0} = \sup_{u \in (1,\infty)} \frac{(\log u)^{\beta-1}}{u},
\end{equation}
and for $u>u_0$ the function $\frac{(\log u)^{\beta-1}}{u}$ is
decreasing.
We write
$$
I_{s,r} = \left(\int_{r < |y| < s \atop |y| \leq 2u_0} + \int_{r < |y| < s \atop |y| > 2u_0}\right) f\big(s - |y|/2 \big) f(|y|) dy
=: I^{(1)}_{s,r} + I^{(2)}_{s,r}
$$
(with the  convention that the integral over an empty set is equal to zero). When $s > u_0 +1$, then $I^{(1)}_{s,r} \leq c_5 f(s-u_0) \int_{|y|>r} f(|y|) dy \leq c_6 f(s) \Psi(1/r)$, {by the fact that $f(s-u_0) \leq c_7 f(s)$ for some $c_7$ uniform in $s$, and by \eqref{eq:PruitH}}. On the other hand, when $s \leq u_0 +1$, then $f$ is within doubling range and we simply have $I^{(1)}_{s,r} \leq f(s/2) \int_{|y|>r} f(|y|) dy \leq c_8 f(s) \Psi(1/r)$. To estimate $I^{(2)}_{s,r}$, we make the following observation: when $u_0<u<\frac{s}{2},$ then for $\vartheta\in(0,1)$ one has $s-\vartheta u > u \geq u_0.$
Consequently, {by the Lagrange's theorem and \eqref{eq:uzero}, for some $\vartheta \in (0,1)$, }
\[(\log s)^\beta-(\log (s-u))^{\beta} =\beta\frac{(\log(s-\vartheta u))^{\beta-1}}{s-\vartheta u}\,u \leq \beta(\log u)^{\beta-1}\] and
further  $$
(\log s)^{\beta} +\frac{1}{2} \left(\log u\right)^{\beta} \leq \left(\log\left(s-u\right)\right)^{\beta} + \left(\log (2u) \right)^{\beta},
$$
increasing $u_0$ if necessary.
This gives that $f\big(s - u) f(2u) \leq f(s) \exp(-(\theta/2) (\log(u))^{\beta})$, for the same range of $s$ and $u$.  Using these observations for $y$ in the domain of $I_{s,r}^{(2)}$ {(i.e. $u:=|y|/2$),} we get $$I^{(2)}_{s,r} \leq f(s) \int_{|y|>r \vee 2u_0} \exp(-(\theta/2) (\log(|y|/2))^{\beta}) dy.$$ Since we can directly check that the last integral is dominated by $c_9 \Psi(1/r)$, for every $r >0$, the claimed bound follows. This completes the proof of the assumption (2) of the cited theorem.

It suffices to prove the remaining condition (3). By \cite[Lemma 5 (a)]{bib:KS14}, we have $\psi(x) \asymp \Psi(|x|)$, $x \in \R^d$. Since $\Psi(r) \asymp r^{\delta} \wedge r^2$ by \eqref{eq:PruitH}, similarly as in the proof of Lemma \ref{lem:density_estimate} we can show that $\int_{\R^d} e^{-t\psi(z)} |z| dz \leq c_{11} t^{-(d+1)/2}$, for large $t$. This is exactly the missing assumption (3).

Thus, by \cite[Theorem 1]{bib:KS13} we get
$$
p(t,x) \leq c_{12} t f(|x|/4) + c_{13} t^{-d/2} e^{-c_{14} \frac{|x|}{\sqrt{t}} \log \left(1+ c_{14} \frac{|x|}{\sqrt{t}}\right)}, \quad x \in \R^d, \ t \geq t_1,
$$
for some constants $c_{12}-c_{14}$ and sufficiently large $t_1>0$. When $|x| \geq t$, the last exponential member is smaller than $c_{15} f(|x|/4)$, for some constant $c_{15} >0$. This yields the claimed upper bound for the densities.
\end{proof}

We now pass to the case when the decay of the L\'{e}vy density is {stretched exponential, exponential, or superexponential}.

\begin{theorem} \label{thm:exp}
Let $X$ be a symmetric L\'evy process with characteristic exponent $\psi$ as in \eqref{eq:Lchexp} with the Gaussian coefficient $A = (a_{ij})_{1 \leq i, j \leq d}$ such that either  $A \equiv 0$ or $\inf_{|\xi|=1} \xi \cdot A \xi > 0,$ and a symmetric L\'evy measure $\nu({\rm d}x)=\nu(x){\rm d}x$
such that there exist $\theta>0$, $\beta \in (0,\infty)$, $\gamma \geq 0$ and $\delta \in (0,2)$ such that
either
\begin{align} \label{eq:profile_exp}
\nu(x) \asymp \1_{\left\{|x| \leq 1 \right\}} |x|^{-d-\delta} + \1_{\left\{|x| > 1 \right\}} e^{-\theta (|x|-1)^{\beta}} |x|^{-\gamma}, \quad x \in \R^d \backslash \left\{0\right\},
\end{align}
or
\begin{align} \label{eq:profile_truncated}
\nu(x) \asymp \1_{\left\{|x| \leq 1 \right\}} |x|^{-d-\delta}, \quad x \in \R^d \backslash \left\{0\right\},
\end{align}
(this corresponds to the limiting case $\beta = \infty$).
Let $V^{\omega}$ be a Poissonian potential with bounded, compactly supported, nonnegative and nonidentically zero a.e. profile $W$.
Then, for any fixed $x \in \R^d$, one has
\begin{align*}
\limsup_{t \to \infty} \frac{\log u^{\omega}(t,x)}{t/(\log t)^{2/d}} \leq - \left(\frac{\rho(\beta \wedge 1)}{d}\right)^{\frac{2}{d}} \lambda_{(2)}, \quad \qpr-\text{a.s.},
\end{align*}
and
\begin{align*}
\liminf_{t\to\infty}\frac{\log u^\omega(t,x)}{t/(\log t)^{2/d}}\geq
- \left(\frac{\rho \omega_d (\beta \wedge 1)}{d}\right)^{\frac{2}{d}}\lambda_1^{(2)}(B(0,1)) , \quad \qpr-\text{a.s.},
\end{align*}
where $\lambda_1^{(2)}(U)$ and $\lambda_{(2)}$ correspond to the diffusion process with Gaussian matrix $\widetilde A$ as in \eqref{eq:lambda_for_A_nu}.

In particular, if $A = a \Id$ for some $a \geq 0$ and $\nu$ is radial nonincreasing, then
$$
\lim_{t\to\infty}\frac{\log u^\omega(t,x)}{t/(\log t)^{d/2}} =
- \left(\frac{\rho \omega_d (\beta \wedge 1)}{d}\right)^{\frac{2}{d}} \,\left(a + \frac{1}{2} \int_{\R^d} y^2_1\nu(y)dy\right) \, \lambda_1^{BM}(B(0,1)) , \quad \qpr-\text{a.s.},
$$
where $\lambda_1^{BM}(B(0,1))$ is the principal eigenvalue of the Brownian motion killed on leaving the ball $B(0,1)$.
\end{theorem}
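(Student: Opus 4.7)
The plan is to follow the same template as the proof of Theorem \ref{thm:exp_log}, feeding the general Theorems \ref{thm:upper} and \ref{thm:lower_bound} (and Corollaries \ref{cor:upper}, \ref{cor:lower}) with sharper input coming from the fact that $\nu$ now decays at least stretched-exponentially at infinity. First I would verify condition \textbf{(C)} with $\alpha=2$: since $\nu$ in \eqref{eq:profile_exp} or \eqref{eq:profile_truncated} has finite second moment, Proposition \ref{prop:suff_for_Okura}(i) gives $\psi(\xi)=\xi\cdot\widetilde A\xi+o(|\xi|^2)$ with $\widetilde A$ as in \eqref{eq:lambda_for_A_nu}. The Hartman--Wintner assumption \eqref{eq:psi_inf} holds either because $\psi(\xi)\geq (\inf_{|\eta|=1}\eta\cdot A\eta)|\xi|^2$ in the non-degenerate Gaussian case, or by a direct lower estimate $\psi_\nu(\xi)\gtrsim|\xi|^\delta$ as in \eqref{eq:exp_log_eq1}. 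Then Theorem \ref{th:Okura} yields \eqref{eq:IDS_upper} with $\kappa_0=\rho(\lambda_{(2)})^{d/2}$, with $\lambda_{(2)}$ attached to the diffusion with matrix $\widetilde A$.

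For the upper bound I would use the sharp transition density estimates from \cite{bib:CKK} (cited in the Introduction for precisely this class of L\'evy measures, together with Proposition \ref{prop:gauss_est} to incorporate the Gaussian part) to produce a profile $F(r)$ in assumption \textbf{(U)} of the form $F(r)\asymp e^{-c\,r^{\beta\wedge 1}}$ (modulo polynomial corrections) when $\beta\in(0,\infty)$, and with $|\log F(r)|\gtrsim r\log\log r$ when $\beta=\infty$. Substituting into \eqref{eq:basicf} one sees that
\[
f_{F,2,\kappa_0}(r)\sim r^{\beta\wedge 1}\,(\log r)^{2/d},\qquad r\to\infty,
\]
and inverting this gives $\log h_{F,2,\kappa_0}(t)\sim\frac{1}{\beta\wedge 1}\log t$, so that
\[
g(t)=\frac{t}{(\log h_{F,2,\kappa_0}(t))^{2/d}}\sim (\beta\wedge 1)^{2/d}\,\frac{t}{(\log t)^{2/d}}.
\]
Applying Corollary \ref{cor:upper} (noting $Q_1=\infty$) and inserting $\kappa_0=\rho(\lambda_{(2)})^{d/2}$ produces the claimed upper bound with constant $(\rho(\beta\wedge 1)/d)^{2/d}\lambda_{(2)}$.

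For the lower bound Proposition \ref{prop:conditionA} supplies \eqref{eq:lower-1} with $K$ arbitrarily close to $1$. The critical input is a sharp lower estimate on the Dirichlet kernel $G(R_0,R)$ of the box at time $1$. For these light tails the crude bound of Proposition \ref{prop:dirichlet-by-nu} is too weak, so here I would invoke the specialized Dirichlet heat kernel estimates of \cite{bib:KiKi} (referred to in the Introduction), which deliver $|\log G(R_0,R)|\lesssim R^{\beta\wedge 1}$. This choice of $F$ and $G$ produces $Q_1=\infty$ and a finite $Q_2$ in Corollary \ref{cor:lower}. Taking $\kappa=\rho\omega_d(\lambda_1^{(2)}(B(0,1)))^{d/2}$ to match the upper bound's asymptotic scale (as in the closing step of the proof of Theorem \ref{thm:exp_log}) yields the stated lower bound.

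Finally, when $A=a\,\Id$ and $\nu$ is radial and nonincreasing the off-diagonal entries $\int y_iy_j\,\nu(y)dy$ vanish and the diagonal entries are equal, so $\widetilde A=(a+\tfrac12\int y_1^2\nu(y)dy)\,\Id$. The Faber--Krahn inequality (see the comment after Theorem \ref{th:Okura}) gives $\lambda_{(2)}=\omega_d^{2/d}\lambda_1^{(2)}(B(0,1))$, and the trivial scaling of the Dirichlet form produces $\lambda_1^{(2)}(B(0,1))=(a+\tfrac12\int y_1^2\nu(y)dy)\,\lambda_1^{BM}(B(0,1))$. Plugging these into the two bounds makes them coincide, which gives the limit. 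The main technical obstacle is Steps 2--3: extracting from \cite{bib:CKK} and \cite{bib:KiKi} the correct profile $F$ and the matching lower bound for $G$ with the right exponent $\beta\wedge 1$, which is what encodes the transition between the one-big-jump regime $\beta<1$ (concave tail of $\log\nu$) and the many-small-jumps/diffusion regime $\beta\geq 1$ (convex tail).
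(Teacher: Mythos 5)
Your overall plan coincides with the paper's proof: verify \textbf{(C)} via Proposition \ref{prop:suff_for_Okura}(i), get \eqref{eq:IDS_upper} with $\kappa_0=\rho(\lambda_{(2)})^{d/2}$ from Theorem \ref{th:Okura}, verify \textbf{(U)} with $F(r)=e^{-c r^{\beta\wedge 1}}$ from the estimates of \cite{bib:CKK} (plus Proposition \ref{prop:gauss_est} for a nondegenerate Gaussian part), compute $\log h_{F,2,\kappa}(t)\approx\tfrac{1}{\beta\wedge1}\log t$, hence $g(t)\approx(\beta\wedge1)^{2/d}t/(\log t)^{2/d}$, and conclude the upper bound from Corollary \ref{cor:upper} with $Q_1=\infty$; the concluding radial step (Faber--Krahn and the identification $\widetilde A=(a+\tfrac12\int y_1^2\nu(y)dy)\Id$) is also as in the paper.

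However, your lower bound has a genuine gap. In Corollary \ref{cor:lower} the conclusion \eqref{eq:statement-upper:A} carries the additive correction $\bigl(\tfrac{6d}{d+2Q_1}+Q_2\bigr)(\kappa/d)^{2/d}$, and here $g(t)\approx(\beta\wedge1)^{2/d}t/(\log t)^{2/d}$ is asymptotically $\kappa$-independent; so your plan --- establish only a \emph{finite} $Q_2$ and then fix $\kappa=\rho\omega_d\bigl(\lambda_1^{(2)}(B(0,1))\bigr)^{d/2}$ as in Theorem \ref{thm:exp_log} --- produces the constant $-(1+Q_2)\bigl(\tfrac{\rho\omega_d(\beta\wedge1)}{d}\bigr)^{2/d}\lambda_1^{(2)}(B(0,1))$, strictly weaker than the stated bound whenever $Q_2>0$, and then the limit in the radial case does not follow. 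The crux of this theorem, in contrast to Theorem \ref{thm:exp_log}, is that $Q_2=0$: for $\beta\in(0,1]$ Proposition \ref{prop:dirichlet-by-nu} already gives $|\log G(1,R)|\lesssim R^{\beta}$, which at $R=2\sqrt d\,r/(\log r)^{2/d+2}$ is $o\bigl((r\wedge|\log F(r)|)+\tfrac d2\log r\bigr)$ thanks to the polylogarithmic denominator; for $\beta\in(1,\infty]$ the bounds one can actually extract from \cite{bib:KiKi} are $|\log G(1,R)|\lesssim R(\log R)^{(\beta-1)/\beta}$ (and $R\log R$ when $\beta=\infty$) --- note your claimed $|\log G(R_0,R)|\lesssim R^{\beta\wedge1}$ overstates what is available there --- yet these weaker bounds still yield $Q_2=0$ for the same reason. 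With $Q_1=\infty$ and $Q_2=0$ the correction term vanishes for every $\kappa$ (this is exactly \eqref{eq:A-infty-B-zero}), so no choice or matching of $\kappa$ is needed. Your version could alternatively be repaired by letting $\kappa\downarrow0$ through a sequence, using the $\kappa$-independence of $g(t)$, but as written the final step does not deliver the claimed constant, and the intermediate heat-kernel bound you attribute to \cite{bib:KiKi} is not the one that reference provides.
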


\begin{proof}
We proceed along the same scheme as in the proof of Theorem \ref{thm:exp_log} above. By Proposition \ref{prop:suff_for_Okura} the basic assumption \textbf{(C)}  is satisfied  (with $\widetilde A$ as in \eqref{eq:lambda_for_A_nu}).

\smallskip

\textsc{The upper bound}.
We first verify the assumption \textbf{(U)}. When $A \equiv 0$, then we derive from the upper bounds in {\cite[(1.14), (1.17) and (1.21)]{bib:CKK}} that
there exist $c_1, c_2 >0$ such that
\begin{align} \label{eq:exp_eq1}
p(t,x) \leq c_1 e^{-c_2|x|^{(\beta \wedge 1)}}, \quad  \text{whenever \ $|x| \geq 2 t \geq 2$.}
\end{align}
This gives that \textbf{(U)} is satisfied with $F(r)=e^{-c_3 r^{(\beta \wedge 1)}}$, for some $c_3 \leq c_2$. By Proposition \ref{prop:gauss_est}, this also extends to the case $\inf_{|\xi|=1} \xi \cdot A \xi > 0$ (one may need to adjust constants). On the other hand, Theorem \ref{th:Okura} yields
$$
\lim_{\lambda\to 0} \lambda^{d/2}\log N^D(\lambda)=-\rho (\lambda_{(2)})^{d/2},
$$
where $\lambda_{(2)}$ is determined by the variational formula \eqref{eq:lambda} with $\lambda_1^{(2)}(U)$ corresponding to the diffusion process with exponent $\psi^{(2)}(\xi) = \xi \cdot \widetilde A \xi$, where $\widetilde A$ is given by \eqref{eq:lambda_for_A_nu}.

We are now ready to apply our general Theorem \ref{thm:upper} and Corollary \ref{cor:upper}. Recall that $\kappa_0 = \rho (\lambda_{(2)})^{d/2}$ and observe that
$$
h_{F,2,\kappa_0}(t) \approx c_4 \left(\frac{t}{(\log t)^{2/d}}\right)^{1/(\beta \wedge 1)},
$$
for some $c_4 >0$, which implies
\begin{align} \label{eq:exp_eq2}
g(t) = \frac{t}{(\log h_{F,2,\kappa_0}(t))^{2/d}} \approx \frac{(\beta \wedge 1)^{2/d} t}{(\log t)^{2/d}}.
\end{align}
Since {$Q_1 = \infty$} in  Corollary \ref{cor:upper}, we may conclude that
$$
\limsup_{t \to \infty} \frac{\log u^{\omega}(t,x)}{t/(\log t)^{2/d}} \leq - \left(\frac{\rho(\beta \wedge 1)}{d}\right)^{\frac{2}{d}} \lambda_{(2)}, \quad \qpr-\text{a.s.},
$$
which is the claimed upper bound.

\noindent\textsc{The lower bound}. Again,
by Proposition \ref{prop:conditionA}, the assumptions of our general Theorem \ref{thm:lower_bound} and Corollary \ref{cor:lower} hold
 with any $K>1$. Similarly as in the previous proofs, we consider the profile function $F(r)=e^{-c_3 r^{(\beta \wedge 1)}}$ and arbitrary $\kappa>0$. Moreover, Proposition \ref{prop:dirichlet-by-nu} gives that that there exists $c_5>0$ such that $G(1,R)\geq c_5 e^{- c_3(R/2)^{\beta}}$, for large $R$. This lower estimate is sufficiently sharp for $\beta \in (0,1]$. However, for $\beta > 1$ it is not sharp enough for our applications. Therefore, we have to address this case separately. According to the definition of the parameter function $G$ in \eqref{eq:G-def}, we derive from \cite[Propositions 3.5 and 3.6]{bib:KiKi} that there exist $c_6, c_7 >0$ such that for sufficiently large $R$
$$
G(1,R) \geq c_6 e^{-c_7 \left(\frac{R}{2}\right) \left(\log\left(\frac{R}{2}\right)\right)^{\frac{\beta-1}{\beta}}}, \quad \text{whenever \ $\beta \in (1,\infty)$,}
$$
and
$$
G(1,R) \geq c_6 e^{-c_7 \left(\frac{R}{2}\right) \log\left(\frac{R}{2}\right)}, \quad \text{in the limiting case \ $\beta = \infty$.}
$$
We are now in a position to apply Corollary \ref{cor:lower} to our main Theorem \ref{thm:lower_bound}. Observe that {$Q_1 = \infty$} in \eqref{eq:A_lower}. By using the above lower bounds for $G$, we also directly get
$$
\lim_{r \to \infty} \frac{\left|\log G\left(1,\frac{2\sqrt d r}{(\log r)^{\frac{2}{d}+2}}\right)\right|}{r\wedge|\log F(r)| + (d/2) \log r} = 0,
$$
i.e. one has {$Q_2=0$} in \eqref{eq:B_lower}. Moreover, note that the asymptotic profile $g(t)$ appearing in \eqref{eq:statement-upper:A} is $\kappa$-independent (cf. \eqref{eq:exp_eq2}). Thus, for any fixed $\kappa>0$ and $x \in \R^d$
$$
\liminf_{t \to \infty} \frac{\log u^{\omega}(t,x)}{{t/(\log t)^{2/d}}} \geq - \left(\frac{\rho \omega_d (\beta \wedge 1)}{d}\right)^{\frac{2}{d}}  \lambda_1^{(2)}(B(0,1)), \quad \qpr-\text{a.s.},
$$
which is the required lower bound.

\noindent\textsc{The concluding step}.
If $A = a \Id$ for some $a \geq 0$ and $\nu$ is radial nonincreasing, then one can show that
$$
\lambda_{(2)} = \omega_d^{2/d} \lambda_1^{(2)}(B(0,1)) = \omega_d^{2/d} \left(a + \frac{1}{2} \int_{\R^d} y^2_1\nu(y)dy\right) \lambda_1^{BM}(B(0,1)).
$$
Therefore, in this case we have
$$
\lim_{t\to\infty}\frac{u^\omega(t,x)}{t/(\log t)^{d/2}} =
- \left(\frac{\rho \omega_d (\beta \wedge 1)}{d}\right)^{\frac{2}{d}} \,\left(a + \frac{1}{2} \int_{\R^d} y^2_1\nu(y)dy\right) \,\lambda_1^{BM}(B(0,1)) , \quad \qpr-\text{a.s.}
$$
The proof is complete.
\end{proof}

\noindent

We now illustrate the above result with several important examples.

\begin{example} \rm{\textbf{(Absolutely continuous L\'evy measures with second moment finite)}. \label{ex:exp}
Our Theorem \ref{thm:exp} above immediately applies to the following examples.
\begin{itemize}
\item[(1)] \emph{Relativistic $\alpha$-stable process}. When $\psi(\xi)=(|\xi|^2+m^{2/\alpha})^{\alpha/2}-m$ with $\alpha \in (0,2)$ and $m>0$, then we have
\begin{align} \label{eq:rel1}
\psi(\xi)=\frac{\alpha}{2}m^{1-\frac{2}{\alpha}} |\xi|^2 +o(|\xi|^2) \quad \text{as} \ \ |\xi| \to 0
\end{align}
and \eqref{eq:profile_exp} holds with $\theta=m^{1/\alpha}$, $\beta=1$, $\gamma=(d+1+\alpha)/2$ and $\delta=\alpha$.
In this case, the quenched behaviour is similar to that for the Brownian motion and
we obtain precise first term asymptotics:
\begin{align}  \label{eq:rel2}
\log u^{\omega}(t,x) = \frac{\alpha}{2}m^{1-\frac{2}{\alpha}} \, \lambda_1^{BM}(B(0,1)) \left(\frac{\rho \omega_d}{d}\right)^{\frac{d}{2}} \, \frac{t}{(\log t)^{\frac{d}{2}}} + o\left(\frac{t}{(\log t)^{\frac{d}{2}}}\right), \quad \text{as \ $t \to \infty$.}
\end{align}
{It is instructive to discuss the following two limiting behaviours of the constant appearing in \eqref{eq:rel2}. When $\alpha \to 2$, then it tends to $\lambda_1^{BM}(B(0,1)) \left(\frac{\rho \omega_d}{d}\right)^{\frac{d}{2}}$, which is the constant obtained for the Brownian motion. The second limit is interesting from the mathematical physics point of view. Recall that the Hamiltonian $\sqrt{-\hbar c^2 \Delta + m^2 c^4}$ (called the \emph{Klein-Gordon square root operator} or the \emph{quasi-relativistic Hamiltonian}) is often said to describe the motion of a \emph{free quasi-relativistic particle}. Here $m$ is the mass of a particle, $c$ is the speed of light, and $\hbar$ is the reduced Planck constant. Since the term $mc^2$ represents the rest mass, the related operator $-L := \sqrt{-\hbar c^2 \Delta + m^2 c^4} - mc^2$ is often called the \emph{kinetic energy operator} (the pure jump L\'evy process generated by $L$  is called the \emph{relativistic process} and is determined by its Fourier symbol $\psi(\xi)=\sqrt{\hbar c^2 |\xi|^2 + m^2 c^4} - mc^2$). Observe that in this case \eqref{eq:rel1} reads as follows:
$$
\psi(\xi)=\frac{\hbar}{2m} |\xi|^2 +o(|\xi|^2) \quad \text{as} \ \ |\xi| \to 0.
$$
The leading term is $c$-independent and it corresponds to passing to the so-called \emph{non-relativistic limit} (i.e. $c \to \infty$). By this fact also the corresponding leading term in \eqref{eq:rel2} remains unchanged under taking such a limit (cf. \cite[Remark 1.3]{bib:KKM}).
}
\item[(2)] \emph{Isotropic tempered $\alpha$-stable process}. Let $\nu(x) = C_{d,\alpha} |x|^{-d-\alpha} e^{-m |x|^{\beta}}$ with $\alpha \in (0,2)$, $\beta>0$ and $m>0$, for some $C_{d,\alpha} >0$. In this case, one should take $\theta=m$, $\beta > 0$, $\gamma=d+\alpha$ and $\delta=\alpha$ in \eqref{eq:profile_exp}. In particular,
$$
\psi(\xi)= \left(\frac{C_{d,\alpha}}{2} \int_{\R^d} y_1^2 |y|^{-d-\alpha} e^{-m |y|^{\beta} dy } \right) |\xi|^2 +o(|\xi|^2) \quad \text{as} \ \ |\xi| \to 0.
$$
With this in mind,
\begin{align*}
\log u^{\omega}(t,x) & = \lambda_1^{BM}(B(0,1)) \left(\frac{C_{d,\alpha}}{2} \int_{\R^d} y_1^2 |y|^{-d-\alpha} e^{-m |y|^{\beta} dy } \right) \left(\frac{\rho \omega_d (\beta \wedge 1)}{d}\right)^{\frac{2}{d}} \, \frac{t}{(\log t)^{\frac{d}{2}}} \\
& \ \ \ \ \ \ \ \ \ \ \ \ \ \ \ \ \ \ \ \ \ \ \ \ \ \ \ \ \ \ \ \ \ \ \ \ \ \ \ \ \ \ \ \ \
+ o\left(\frac{t}{(\log t)^{\frac{d}{2}}}\right), \quad \text{as \ $t \to \infty$.}
\end{align*}
\item[(3)] \emph{Isotropic Lamperti stable process.} Let $\nu(x) = C_{d,\alpha} |x|^{-(d-1)} e^{m |x|} (e^{|x|} +1)^{-\alpha-1}$ with $\alpha \in (0,2)$ and $0 <m < \alpha+1$, for some $C_{d,\alpha} >0$. For this case we immediately obtain the analogous first term asymptotics as in (2).
{
\item[(4)] \emph{Truncated stable process}. Let $\nu(x) = C_{d,\alpha} |x|^{-d-\alpha} \1_{\left\{|x| \leq 1\right\}}$ with $\alpha \in (0,2)$, for some $C_{d,\alpha} >0$. This is the limiting case $\beta = \infty$. We now have
\begin{align*}
\log u^{\omega}(t,x) & = \lambda_1^{BM}(B(0,1)) \left(\frac{C_{d,\alpha}}{2} \int_{|y| \leq 1} y_1^2 |y|^{-d-\alpha} dy \right) \left(\frac{\rho \omega_d}{d}\right)^{\frac{2}{d}} \, \frac{t}{(\log t)^{\frac{d}{2}}} + o\left(\frac{t}{(\log t)^{\frac{d}{2}}}\right),
\end{align*}
as $t \to \infty$.
}
\end{itemize}
}
\end{example}

As mentioned above, for more clarity we decided to present and prove our Theorems \ref{thm:polynomial}-\ref{thm:exp} for absolutely continuous L\'evy measures only. However, we want to emphasize that similar results holds true in much more general settings.
For completeness, we now give some examples of less regular L\'evy measures to which our general Theorems \ref{thm:upper} and \ref{thm:lower_bound} apply directly. This can be justified by modification of the argument above. The details are left to the reader.

\begin{example} {\rm {\textbf{(Less regular L\'evy measures with second moment finite)}}\label{ex:ex_irr}

\noindent
\begin{itemize}
\item[(1)] {\emph{Product L\'evy measures}. Let $n$ be a symmetric finite measure on the unit sphere $S^{d-1}$ such that
$$
n(B(\varphi,r) \cap S^{d-1}) \geq c_0 r^{d-1}, \quad \varphi \in S^{d-1}, \ \ r \in (0,1/2],
$$
for some constant $c_0>0$, and let
$$
f(s):= \1_{[0,1]}(s) \cdot s^{-\theta/q}  + e^m \1_{(1,\infty)}(s) \cdot e^{-m s^{\beta}} s^{-\delta} , \quad s > 0,
$$
with $m>0$, $\beta \in (0,1/2]$, $\delta >0$ and $\theta \in (0,2q)$. Consider a symmetric L\'evy process with L\'evy-Khinchine exponent $\psi$ as in \eqref{eq:Lchexp} with diffusion matrix $A$ such that $A \equiv 0$ or $\inf_{|\xi|=1} \xi \cdot A \xi > 0$ and product L\'evy measure $\nu({\rm d}r {\rm d}\varphi) = n({\rm d} \varphi) f(r) {\rm d}r$. Then the $\qpr$-a.s. bounds for $\liminf_{t \to \infty} \frac{\log u^{\omega}(t,x)}{t/(\log t)^{2/d}}$ and $\limsup_{t \to \infty} \frac{\log u^{\omega}(t,x)}{t/(\log t)^{2/d}}$ of Theorem \ref{thm:exp} extend to this case. Note that we do not impose any growth condition on $n$ from above. Therefore this example covers a wide range of L\'evy measures that are not absolutely continuous with respect to the Lebesgue measure. Many other examples can also be produced by changing the profile $f$.}

\item[(2)] \emph{L\'evy measures with purely discrete long jumps parts}. Let $f:(-1,1)^d \cup \Z^d \to \R$ be given by
$$
f(x) = \left\{\begin{array}{ll}
\frac{1}{2^{(d+\theta) n}}  &\mbox{when } x \in \left(-\frac{1}{2^n}, \frac{1}{2^n} \right)^d \setminus \left[-\frac{1}{2^{n+1}}, \frac{1}{2^{n+1}}\right]^d, \ n \in \Z , \\
\frac{1}{(\max_{1 \leq i \leq d} |x_i|)^{d+\delta}} & \mbox{when } x = (x_1,x_2,...,x_d) \in \Z^d \setminus \left\{0\right\},
\end{array}\right.
$$
with $\theta \in (0,2)$ and $\delta >d$. Denote by $f_r(x) = f(rx)$, $r>0$, the dilatations of $f$. Consider a symmetric L\'evy process with L\'evy-Khinchine exponent $\psi$ as in \eqref{eq:Lchexp} with diffusion matrix $A$ such that $A \equiv 0$ or $\inf_{|\xi|=1} \xi \cdot A \xi > 0$ and L\'evy measure $\nu_r$ defined by
$$
\nu_r(B):= \int_{B \cap (-1,1)^d} f_r(y) dy + \sum_{y \in B \cap \Z^d \setminus \left\{0\right\}} f_r(y),
$$
for every Borel set $B \subset \R^d$ and for given $r>0$. Then the $\qpr$-a.s. bounds for $\liminf_{t \to \infty} \frac{\log u^{\omega}(t,x)}{t^{d/(d+2)}}$ and $\limsup_{t \to \infty} \frac{\log u^{\omega}(t,x)}{t^{d/(d+2)}}$ with $\delta_1 = \delta$ and $\delta_2 = \delta-d$ as in Theorem \ref{thm:polynomial} also apply.
\end{itemize}
}
\end{example}


\begin{thebibliography}{00}

\bibitem{bib:App}
D. Applebaum:
\emph{L\'evy Processes and Stochastic Calculus},
Cambridge University Press, 2nd ed., 2008



\bibitem{BGR13} K. Bogdan, T. Grzywny, M. Ryznar, {\it Density and tails of unimodal convolution semigroups}, J. Funct. Anal. 266 (6) (2014), 3543-3571.

\bibitem{BBKRSV} K. Bogdan et al, {\it Potential Analysis of Stable Processes and its Extensions} (ed. P. Graczyk, A. St\'os),
Lecture Notes in Mathematics 1980, Springer, Berlin, 2009.

\bibitem{bib:blp} L. Brasco, E. Lindgren, E. Parini, {\em  The fractional Cheeger problem.} Interfaces
Free Bound. 16 (2014), 419-458. doi: 10.4171/IFB/325

\bibitem{bib:CL}  R. Carmona, J. Lacroix, {\em Spectral theory of random Schr\"{o}dinger operators}. Probability and its Applications. Birkh\"auser Boston, Inc., Boston, MA, 1990.

\bibitem{bib:CMS90}
R. Carmona, W.C. Masters, B. Simon,
\emph{Relativistic Schr\"odinger operators: asymptotic behaviour of the eigenfunctions},
J. Funct. Anal. 91, 1990, 117-142.

\bibitem{bib:CKK}
Z.-Q. Chen, P. Kim, T. Kumagai,
\emph{Global heat kernel estimates for symmetric jump processes},  Trans. Amer. Math. Soc.
363 (9), 2011, 5021--5055.


\bibitem{bib:CS0}
Z.-Q. Chen, R. Song,
\emph{Two sided eigenvalue estimates for subordinate processes in domains}, J. Funct. Anal. 226, 2005, 90-113.

\bibitem{DC}
M. Demuth, J. A. van Casteren,
\emph{Stochastic Spectral Theory for Self-adjoint Feller Operators. A Functional Analysis Approach}, Birkh\"auser, Basel, 2000.




\bibitem{bib:Don-Var} M.D. Donsker, S.R.S. Varadhan, {\em Asymptotics for the Wiener sausage},
 Comm. Pure Appl. Math. 28 (1975), no. 4, 525--565.

\bibitem{bib:Fuk} R. Fukushima,
{\emph From the Lifshitz tail to the quenched survival asymptotics in the trapping problem}, Electron. Commun. Probab. 14 (2009), 435--446.

\bibitem{bib:Grz}
T. Grzywny,
\emph{On Harnack Inequality and H\"older Regularity for Isotropic Unimodal L\'evy Processes},
Potential Anal. 41, 1--29 (2014).

\bibitem{bib:GrzSz}
T. Grzywny, K. Szczypkowski,
\emph{Kato classes for L\'evy processes}, preprint, 2015, available at arXiv:1503.05747.

\bibitem{bib:IW}
N. Ikeda, S. Watanabe,
\emph{On some relations between the harmonic measure and the L\'evy measure for a certain class of Markov processes},
J. Math. Kyoto Univ. 2-1, 1961, 79-95,

\bibitem{bib:J}
N. Jacob,
\emph{Pseudo-Differential Operators and Markov Processes: Markov Processes and Applications},
vols. 1-3, Imperial College Press, 2003-2005.

\bibitem{bib:JSch} N. Jacob, R. L. Schilling, {\it L\'{e}vy-type processes and pseudo differential operators}. In: O. Barndorff-Nielsen, T. Mikosch and S. Resnick (eds.): L\'{e}vy processes: theory and applications , Birkh\"{a}user, Boston 2001, 139 -167.




\bibitem{bib:KKM}
K. Kaleta, M. Kwa\'snicki, J. Ma\l ecki,
\emph{One-dimensional quasi-relativistic particle in the box}, Rev. Math. Phys. 25, 1350014, 2013.

\bibitem{bib:KL14}
K. Kaleta, J. L\H{o}rinczi,
\emph{Pointwise eigenfunction estimates and intrinsic ultracontractivity-type properties of Feynman-Kac semigroups for a class of L\'evy processes}, Ann. Probab. 43 (3), 2015, 1350-1398.

\bibitem{bib:KL15}
K. Kaleta, J. L\H{o}rinczi,
\emph{Fall-off of eigenfunctions for non-local Schr\"odinger operators with decaying potentials}, preprint 2015, available at arXiv:1503.03508.

\bibitem{bib:KS13}
K. Kaleta, P. Sztonyk,
\emph{Estimates of transition densities and their derivatives for jump Levy processes}, J. Math. Anal. Appl. 431 (1), 2015, 260-282.

\bibitem{bib:KS14}
K. Kaleta, P. Sztonyk,
\emph{Small time sharp bounds for kernels of convolution semigroups}, J. Anal. Math., to appear, available at arXiv:1403.0912.






\bibitem{bib:Kal-Pie-SPA}
K. Kaleta, K. Pietruska-Pa\l uba, \emph {Integrated density of states for Poisson-Schr\"{o}dinger processes on the Sierpi\'{n}ski gasket},
Stochastic Process. Appl. 125 (4), 2015, 1244-1281.


\bibitem{bib:kk-kpp-lif} K. Kaleta, K. Pietruska-Pa\l uba, {\em Lifschitz singularity for subordinate Brownian motions in presence of the Poissonian potential on the Sierpi\'{n}ski gasket}, preprint 2014,
available at arXiv:1410.1715.

\bibitem{bib:KiKi}
K.-Y. Kim, P Kim,
\emph{Two-sided estimates for the transition densities of
symmetric Markov processes dominated by stable-like
processes in $C^{1,\eta}$ open sets}, Stochastic Process. Appl. 124 (9), 2014, 3055–3083.

\bibitem{bib:Kon-Wolff} W. K\"{o}nig, {\em The parabolic Anderson model}, preprint (2015), available at http://www.wias-berlin.de/people/koenig/www/PAMsurveyBook.pdf

\bibitem{bib:Okura} H. Okura, {\em On the spectral distributions of certain integro-differential operators with random potential}, Osaka J. Math. 16 (1979), no. 3, 633-666.

\bibitem{bib:Okura81} H. Okura, {\em Some limit theorems of Donsker-Varadhan type for Markov processes expectations}, Z. Wahrscheinlichkeitstheorie verw. Gebiete 57 (1981), 419-440.

\bibitem{bib:kpp-ptrf}
K. Pietruska-Pa{\l}uba, {\em The Lifschitz singularity for the density of states on the Sierpiński gasket},  Probab. Theory Related Fields 89 (1991), no. 1, 1-33.

\bibitem{bib:kpp-spa}
K. Pietruska-Pa{\l}uba,
{\em Almost sure behaviour of the perturbed Brownian motion on the Sierpiñski gasket.} Stochastic Process. Appl. 85 (2000), no. 1, 1-17.

\bibitem{bib:Pru}
W.E. Pruitt,
\emph{The growth of random walks and L\'evy processes},
Ann. Probab. 9 no. 6, 948--956 (1981).

\bibitem{bib:Sat}
K.-I. Sato,
\emph{L{\'e}vy Processes and Infinitely Divisible Distributions},
Cambridge Univ. Press, Cambridge, 1999.

\bibitem{bib:Szn-ptrf93} A.S. Sznitman, {\em Brownian asymptotics in a Poissonian environment.}
Probab. Theory Relat. Fields 95 (1993), 155-174.

\bibitem{bib:SSV}
R. Schilling, R. Song, Z. Vondra\v{c}ek,
\emph{Bernstein Functions}, Walter de Gruyter, 2010.

\bibitem{bib:Sch}
R. Schilling,
\emph{Growth and H\"older conditions for the sample paths
of Feller processes}, Probab. Theory Relat. Fields 112, 565--611 (1998).

\bibitem{bib:Schm}
K. Schm\"udgen,
\emph{Unbounded Self-adjoint Operators on Hilbert Space},
Graduate Texts in Mathematics 265, Springer 2012.






\end{thebibliography}
\end{document}